\documentclass{amsart}
%%%%%%%%%%%%%%%%%%%%%%%%%%%%%%%%%%%%%%%%%%%%%%%%%%%%%%%%%%%%%%%%%%%%%%%%%%%%%%%%%%%%%%%%%%%%%%%%%%%%%%%%%%%%%%%%%%%%%%%%%%%%%%%%%%%%%%%%%%%%%%%%%%%%%%%%%%%%%%%%%%%%%%%%%%%%%%%%%%%%%%%%%%%%%%%%%%%%%%%%%%%%%%%%%%%%%%%%%%%%%%%%%%%%%%%%%%%%%%%%%%%%%%%%%%%%
\usepackage{amssymb}
\usepackage{amsfonts}
\usepackage{amsmath}
\usepackage{amsthm}
\usepackage{amssymb}
\usepackage[square,sort,comma,numbers]{natbib}
\usepackage{cases}
\usepackage{tcolorbox}
\usepackage{tikz}
\usepackage{tkz-graph}
\usepackage{indentfirst}
\usepackage{float}
\usepackage{enumitem}
\usepackage{csquotes}

%\usepackage[a4paper, total={6in, 8in}, left=2cm, right=2cm]{geometry}
%\usepackage{tkz-berge}

%\tikzstyle{vertex}=[auto=left,circle,draw,fill=black!100,minimum size=7.5pt,inner sep=2pt]
\tikzstyle{vertex1}=[auto=left,circle,draw,fill=black!25,minimum size=10pt,inner sep=2pt]
\tikzstyle{Dot}=[auto=left,circle,fill=black!100,minimum size=2pt,inner sep=1.5pt]
\tikzstyle{dot}=[auto=left,circle,fill=black!100,minimum size=0.5pt,inner sep=0.5pt]
\tikzstyle{Dot2}=[auto=left,circle,fill=black!100,minimum size=1pt,inner sep=1pt]

\setcounter{MaxMatrixCols}{10}
\newtheorem{Definition}{Definition}
\newtheorem{Theorem}{Theorem}

\newtheorem{Corollary}{Corollary}
\newtheorem{proofpart}{Part}

\newcounter{cases}
\newcounter{subcases}
\newenvironment{mycases}
{%
	\setcounter{cases}{0}%
	\def\case
	{%
		\par\noindent
		\refstepcounter{cases}%
		\textbf{Case \thecases.}
	}%
}
{%
	\par
}
\newenvironment{subcases}
{%
	\setcounter{subcases}{0}%
}
{%
}
\renewcommand*\thecases{\arabic{cases}}

\newcommand*{\mybox}[1]{\framebox{#1}}

\newcounter{boxlblcounter}  
% \hfill fills the label box

\makeatletter
\@addtoreset{proofpart}{Theorem}
\newcommand{\G}{\Gamma}
\newcommand{\V}{{\mathcal{V}}}
\newcommand{\E}{\mathcal{E}}
\newcommand{\W}{\mathcal{R}}
\newcommand{\U}{\mathcal{R}}
\newcommand{\C}{\mathcal{C}}
\renewcommand{\P}{\mathcal{P}}
\newcommand{\R}{\mathcal{R}}
\newcommand{\A}{a}
\newcommand{\XX}{x}
\newcommand{\YY}{y}
\newcommand{\RR}{\gamma}

\makeatletter
\newsavebox\myboxA
\newsavebox\myboxB
\newlength\mylenA

\newcommand*\xoverline[2][0.75]{%
	\sbox{\myboxA}{$\m@th#2$}%
	\setbox\myboxB\null% Phantom box
	\ht\myboxB=\ht\myboxA%
	\dp\myboxB=\dp\myboxA%
	\wd\myboxB=#1\wd\myboxA% Scale phantom
	\sbox\myboxB{$\m@th\overline{\copy\myboxB}$}%  Overlined phantom
	\setlength\mylenA{\the\wd\myboxA}%   calc width diff
	\addtolength\mylenA{-\the\wd\myboxB}%
	\ifdim\wd\myboxB<\wd\myboxA%
	\rlap{\hskip 0.5\mylenA\usebox\myboxB}{\usebox\myboxA}%
	\else
	\hskip -0.5\mylenA\rlap{\usebox\myboxA}{\hskip 0.5\mylenA\usebox\myboxB}%
	\fi}
\makeatother

% Keywords

\providecommand{\keywords}[1]{\textbf{\textit{Keywords---}} #1}

%    Absolute value notation

%    Blank box placeholder for figures (to avoid requiring any
%    particular graphics capabilities for printing this document).

\begin{document}

\leftline{{\footnotesize }}

\centerline{}
\centerline{}
\centerline{}

\title[Metric dimensions of bicyclic graphs and potential
applications]{Metric dimensions of bicyclic graphs with potential applications in Supply Chain Logistics}
\author[M. Wang]{Muwen Wang}
\address{School of Business Administration Shandong Women's University, Jinan, China}
\email{wangmuwen@outlook.com}
\author[G. Haidar]{Ghulam Haidar}
\address{Department of Mathematics and Statistics, The University of Haripur, Pakistan}
\email{haidersehani012@gmail.com}
\author[F. Yousafzai]{Faisal Yousafzai}
\address{Department of Basic Sciences and Humanities, National University of Sciences and Technology, Islamabad, Pakistan}
\email{yousafzaimath@gmail.com}
\author[M. U. I. Khan]{Murad ul Islam Khan*}\thanks{*Corresponding author}
\address{Department of Mathematics and Statistics, The University of Haripur, Pakistan}
\email{muradulislam@uoh.edu.pk}
\author[W. Sikandar]{Waseem Sikandar}
\address{Department of Mathematics and Statistics, The University of Haripur}
\email{waseem.sikandar@uoh.edu.pk}
\author[A. U. I. Khan]{Asad ul Islam Khan}
\address{Economics Department, Ibn Haldun University, Istanbul, Turkiye }
\email{asad.khan@ihu.edu.tr}
\subjclass[2010]{05C12; 05C90}
\keywords{Graph Theory; $\Theta$-graph; Bicyclic Graph; Metric Dimensions; Metric Basis.}

\begin{abstract}
	Metric dimensions and metric basis are graph invariants studied for their use in locating and indexing nodes in a graph. It was recently established that for bicyclic graph of type-III ($\Theta $-graphs), the metric dimension is $3$ only, when all paths have equal lengths, or when one of the outside path has a length $2$ more than the other two paths. In this article, we refute this claim and show that the case where the middle path is $2$ vertices more than the other two paths, also has metric dimension $3$. We also determine the metric dimension for other values of $p,q,r$ which were omitted in the recent research due to the constraint $p \leq q \leq r$. We also propose a graph-based technique to transform an agricultural supply chain logistics problem into a mathematical model, by using metric basis and metric dimensions. We provide a theoretical groundwork which can be used to model and solve these problems using machine learning algorithms.
\end{abstract}

\maketitle

\section{Introduction}
Graph Theory models real world relationships using entities, e.g., persons, websites, cities etc., as nodes/vertices and their relationships e.g., friendship, hyperlinks, roads etc., as links/edges. A graph $\G$ is called simple if any two connected vertices have only one edge, and if no vertex is connected to itself.

For a simple connected graph $\G=(\V,\E)$, $d(u,v)$ is called the distance between vertices $u,v \in \G$, and \[d(u,v)=\min\{|\P_{uv}| : P_{uv} \text{ is a path between $u$ and $v$}\}.\]
Here $|\P_{uv}|$ is called the length of the path and is defined to be the number of edges on the path $\P_{uv}$.

Let $\U=\{v_1,v_2, \cdots, v_t\} \subseteq \V(\G)$ be an ordered subset and $v \in \V(\G)$. Let $x \in \mathbb{Z}^t$ be defined as $x=(d(v,v_1), d(v,v_2), \cdots, d(v,v_t))$. This $x \in \mathbb{Z}^t$ is called the representation of $v$ with respect to $\U$, denoted by $\RR(v|\U)$. $\U$ is a "\textit{locating set}" \cite{sla2} or "\textit{resolving set}" \cite{CHARTRAND200099} if all vertices of $\G$ have distinct representation with respect to $\U$. Given $\mathcal{W}$ to be the family of all resolving sets of $\G$, then a minimal element of this family is called a metric basis. The cardinality of metric basis is called metric dimension of $\G$, denoted by $\beta(\G)$.

Metric dimension was introduced by Slater \cite{sla1} while studying the location of an intruder in a network. It was also independently studied by Hararay and Melter \cite{melter1976metric}. Since its inception, this idea has garnered a huge interest in scientific literature. Nazeer \textit{et} al. \cite{nazeer2021metric} worked with path graphs and calculated the metric dimension of many graphs arising from them. Carceres \textit{et} al. \cite{CACERES20122618} considered infinite graphs where all vertices have finite degree and provided a necessary condition for those graphs to have finite metric dimensions. They also characterized infinite trees with finite metric dimensions. Rezaei \textit{et} al. \cite{rezaei2022metric} determined the metric dimension of some generalized Cayley graphs. All unicyclic graphs with metric dimension $2$ were characterized in \cite{dudenko2017unicyclic}. Singh \textit{et} al. used the metric dimension parameter to identify some specific chemical structures \cite{singh2023vertex}. Sharma \textit{et} al. provided metric resolvability and topological characterization of some molecules in $H1N1$ antiviral drugs \cite{doi:10.1080/08927022.2023.2223718}. For further studies on resolving sets, metric basis and metric dimensions, one can refer to \cite{saenpholphat2004conditional, GENESON2022123, klein2012comparison, math10060962, bailey2010metric, caceres2007metric, yero2011metric} and the references therein.

Many different variants of the metric dimension have been defined and extensively studied, e.g., edge metric dimension \cite{kelenc2018uniquely, knor2021graphs, sharma2023edge}, local metric dimension \cite{okamoto2010local, fancy2021local, ghalavand2023conjecture}, and fault-tolerant metric dimension \cite{hernando2008fault, guo2020fault, sharma2022fault}.

The concepts of metric dimensions and metric basis have applications in many fields, for example, image processing \cite{MELTER1984113}, combinatorial optimization \cite{sebHo2004metric}, and pharmaceutical chemistry \cite{cameron1991designs}, just to name a few.

Metric dimension problem for unicyclic graphs has been studied by many authors. Eroh \textit{et} al. studied the relationship between metric dimension and zero forcing number, $\mathcal{Z}(\G)$, of trees and unicyclic graphs. Sedlar \textit{et} al \cite{sedlar2022vertex, sedlar2021bounds} established that the metric dimension of unicyclic graph is bounded and it can take only take two consecutive integers as its values. They also extended these results to leafless cacti graphs and showed that the bound for metric dimension for such graphs is $2c(\G)-1$.

Three different types of base bicyclic graphs were given in \cite{math11040869}, and the metric dimension for type-I and type-II base bicyclic graphs were calculated. On the other hand, it was established in \cite{10142411} that for $p \geq 2$, $\beta(\Theta_{p,p,p})=\beta(\Theta_{p,p,p+2})=3$, while $\beta(\Theta_{p,q,r})=2$, where $p \leq q \leq r$. Since $\Theta$-graphs are actually base bicyclic graphs of type-III, the results of $\Theta$-graphs are also applicable to base bicyclic graphs of type-III. We will further discuss these results in the motivation section of our article.

%Recently, it was established in \cite{sedlar2021bounds} that the metric dimension of unicyclic graph is bounded and it can take only take two consecutive integers as its values. Next, in \cite{sedlar2022vertex}, conditions were established under which these values are attained. These results were then extended to graphs with edge disjoint cycles, also known as \textit{cacti}, in \cite{SEDLAR2022126}. Since, the bounds in the class of cacti are dependent on the presence of \textit{leaves} (vertices of degree $1$), leafless cacti were investigated in \cite{SEDLAR2022127147} and it was established that the bound in this case is $2c(G)-1$. 

\subsection{Preliminaries}
A simple connected graph $\G=(\V,\E)$ is of order $n$ and size $m$, if it has $n$ vertices and $m$ edges. $\mathcal{\P}_n$, $\mathcal{\C}_n$ and $\mathcal{K}_n$, represent a path, cycle and complete graph of order $n$. By $\mathcal{K}_{n_1,n_2}$, we denote a bipartite graph with all possible edges having partitions of order $n_1$ and $n_2$. Disjoint union of graphs $\G$ and $\G'$ will be denoted by $\G \cup \G'$, while $\G+\G'$ will denote the graph constructed from $\G \cup \G'$ by joining all vertices of $\G$ and $\G'$.

\begin{Theorem}\cite{khuller1996landmarks,CHARTRAND200099} \label{basic}
	For a connected simple graph $\G=(\V,\E)$ having $2$ or more vertices, we have
	\begin{enumerate}[label=(\alph*)]
		\item $\beta(\G)=1$ iff $\G$ is an order $n$ path.
		\item $\beta(\G)=n-1$ iff $\G$ is an order $n$ complete graph.
		\item Let $\G$ be a cycle on $3$ or more vertices,  then $\beta(\G)=2$.
		\item For $n \geq 4$, $\beta(\G)=n-2$ iff $\G = \mathcal{K}_{n_1,n_2} (n_1,n_2 \geq 1, n_1+n_2=n), \G = \mathcal{K}_{n_1}+\xoverline{\mathcal{K}}_{n_2} (n_1 \geq 1, n_2 \geq 2, n_1+n_2=n)$, or $\G = \mathcal{K}_{n_1} + (\mathcal{K}_1 \cup \mathcal{K}_{n_2}) (n_1,n_2 \geq 1, n_1+n_2=n-1)$.
	\end{enumerate}
\end{Theorem}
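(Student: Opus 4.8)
The plan is to prove the four parts separately, using two facts throughout. First, $\beta(\G)\le n-1$ always: for any vertex $v$, the set $\V(\G)\setminus\{v\}$ resolves, since each vertex $\ne v$ carries a $0$ in its own coordinate while $v$ is the unique vertex whose representation has no $0$. Second, call $u,v$ \emph{twins} if $d(u,w)=d(v,w)$ for all $w\notin\{u,v\}$, equivalently $N(u)\setminus\{v\}=N(v)\setminus\{u\}$ or $N(u)=N(v)$; one checks this is an equivalence relation each of whose classes induces a clique or an independent set, that a resolving set must omit at most one vertex from each class (hence $\beta(\G)\ge n-k$ with $k$ the number of classes), and that if $u,v$ are \emph{not} twins then $\V(\G)\setminus\{u,v\}$ already resolves, so $\beta(\G)\le n-2$.

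For (a): if $\G=\P_n$ then an endpoint $v$ yields distances $0,1,\dots,n-1$, all distinct, so $\{v\}$ resolves; conversely if $\{v\}$ resolves then the $n$ values $d(u,v)$ are distinct integers in $\{0,1,\dots,\operatorname{ecc}(v)\}$, forcing $\operatorname{ecc}(v)=n-1$ and a unique vertex $v_i$ at each distance $i$, whence $v_i\sim v_{i+1}$ (a shortest $v_0$–$v_{i+1}$ path passes through the unique distance-$i$ vertex) and no other edges exist (an edge $v_iv_j$ with $j\ge i+2$ would give $d(v_0,v_j)<j$), so $\G=\P_n$. For (c): $\C_n$ is not a path, so $\beta(\C_n)\ge2$ by (a), while for adjacent $v_1,v_2$ on $\C_n$ the pairs $(d(u,v_1),d(u,v_2))$ are pairwise distinct as $u$ runs over the cycle, so $\{v_1,v_2\}$ resolves and $\beta(\C_n)=2$. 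For (b): every pair of vertices of $\mathcal{K}_n$ is a twin pair, so $\mathcal{K}_n$ has a single twin class and $\beta(\mathcal{K}_n)\ge n-1$, hence $\beta(\mathcal{K}_n)=n-1$; conversely $\beta(\G)=n-1$ rules out non-twin pairs, so $\V(\G)$ is one twin class, i.e.\ $\G$ is a clique or an independent set, and connectedness with $n\ge2$ forces $\G=\mathcal{K}_n$.

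For the reverse implication of (d), compute twin classes. For $\mathcal{K}_{n_1,n_2}$ the classes are the two sides, and for $\mathcal{K}_{n_1}+\xoverline{\mathcal{K}}_{n_2}$ they are the universal clique and the independent part; in both cases there are two twin classes, so $\beta\ge n-2$, and deleting all but one vertex from each relevant class leaves a resolving set, giving $\beta=n-2$ (the borderline cases $n_1=1$ or $n_2=2$ are checked directly). For $\mathcal{K}_{n_1}+(\mathcal{K}_1\cup\mathcal{K}_{n_2})$ with $n_2\ge2$ there are three twin classes, so the twin bound only gives $\beta\ge n-3$; here one verifies directly that deleting any three vertices leaves two vertices whose representations agree on every surviving coordinate, hence $\beta\ge n-2$, and with $\G\ne\mathcal{K}_n$ this yields $\beta=n-2$.

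For the forward implication of (d), suppose $\beta(\G)=n-2$ with $n\ge4$. I would first prove $\operatorname{diam}(\G)\le2$: from a geodesic $x_0x_1x_2\cdots x_d$ with $d\ge3$, the set $S=\{x_1,x_2,x_d\}$ has the property that each of its pairs is distinguished by some vertex outside $S$ (the vertices $x_0$ and $x_d$ suffice), so $\V(\G)\setminus S$ resolves and $\beta(\G)\le n-3$, a contradiction. Once $\operatorname{diam}(\G)\le2$, every coordinate of every representation lies in $\{0,1,2\}$, so resolving becomes a condition on neighbourhood patterns, and one finishes by a case analysis on the twin classes of $\G$ and on which pairs of classes are joined, showing the only configurations compatible with $\beta=n-2$ are the three listed families (and their small-parameter overlaps). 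The main obstacle is precisely this last enumeration: the diameter reduction is clean, but many diameter-$\le2$ non-complete graphs still fail $\beta=n-2$ (for instance $\C_5$), so the twin-class analysis must be pushed through exhaustively — this is the delicate part carried out in \cite{CHARTRAND200099}; a secondary subtlety is that the lower bound for $\mathcal{K}_{n_1}+(\mathcal{K}_1\cup\mathcal{K}_{n_2})$ does not follow from twin-counting and needs the separate three-deletions argument noted above.
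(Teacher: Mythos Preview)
The paper does not prove this theorem at all: it is quoted verbatim from \cite{khuller1996landmarks,CHARTRAND200099} as background, with no accompanying argument. So there is no ``paper's own proof'' to compare against; your proposal goes well beyond what the paper supplies.

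That said, your sketch is essentially the standard route and is mostly sound. Two small slips are worth fixing. First, in the reverse direction of (d) you write ``deleting all but one vertex from each relevant class leaves a resolving set''; you mean the opposite --- keep all but one vertex from each twin class (so the resolving set has size $n-2$, not size $2$). Second, in the diameter argument you list $x_d$ among the distinguishers ``outside $S$'', but $x_d\in S$; fortunately $x_0$ alone already separates all three pairs $\{x_1,x_2\},\{x_1,x_d\},\{x_2,x_d\}$ since $d(x_0,x_i)=i$, so the conclusion $\beta(\G)\le n-3$ still stands. The genuinely incomplete part is, as you acknowledge, the exhaustive twin-class enumeration needed for the forward direction of (d) once $\operatorname{diam}(\G)\le 2$; since the paper itself simply cites \cite{CHARTRAND200099} for the whole theorem, deferring that step to the same reference is consistent with how the result is used here.
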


\begin{Definition}
	A simple connected graph $\G$ is bicyclic, if it is of order $n$ and size $n+1$.
\end{Definition}

A vertex is said to be a leaf if it has degree $1$. A bicyclic graph is said to be a \textit{base bicyclic graph} if it does not have any leaves. There are three different types of base bicyclic graphs \cite{math11040869}:

\begin{itemize}
	\item $\C_{p,q}$ where cycles $\C_p$ and $\C_q$ share a single vertex.
	\item $\C_{p,r,q}$ where a vertex of $\C_p$ is connected to a vertex of $\C_q$ by a path of length $r$.
	\item $\C_{p,q,r}$ obtained by joining the end vertices of paths on $p$, $q$ and $r$ number of vertices.
\end{itemize}

Bicyclic graphs of type III, i.e., $\C_{p,q,r}$ are usually referred to as $\Theta$-graphs. They are studied extensively in the literature \cite{LIU201695, bukh2020turan, xu2024spectral} owing to their unique structure.

\subsection{Motivation}
Before proceeding further, let us introduce the notation of $\Theta$-graph used in \cite{10142411}.

\enquote{
	Therefore, let us introduce a necessary notation for $\Theta$-graphs. Let $\G$ be a $\Theta$-graph; by $u$ and $v$, we denote the two vertices of degree $3$ in $\G$. Notice that there are three distinct paths in $\G$ connecting $u$ and $v$, and we denote them by $\P_1=u_0u_1 \cdots u_p$, $\P_2=v_0v_1 \cdots v_q$, and $\P_3=w_0w_1 \cdots w_r$, so that $u_0=v_0=w_0=u$, $u_p=v_q=w_r=v$, and \underline{$p \leq q \leq r$}. The cycle in $\G$ induced by paths $\P_i$ and $\P_j$ will be denoted by $\C_{ij}$. A $\Theta$-graphs in which paths $\P_1$, $\P_2$, and $\P_3$ are of lengths $p,q,$ and $r$, respectively, is denoted by $\Theta_{p,q,r}$.}

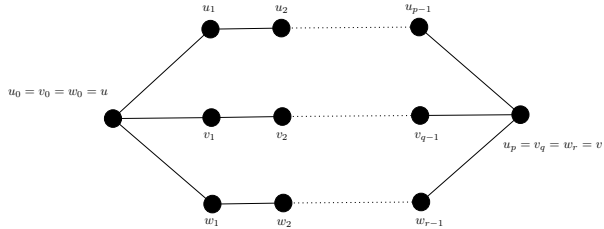
\begin{figure}[H]
	\centering
	
	\tikzset{every picture/.style={line width=0.75pt}} %set default line width to 0.75pt        
	\resizebox{8cm}{3cm}{%  
		\begin{tikzpicture}[x=0.75pt,y=0.75pt,yscale=-1,xscale=1]
			%uncomment if require: \path (0,300); %set diagram left start at 0, and has height of 300
			
			%Shape: Ellipse [id:dp26211884448221334] 
			\draw  [fill={rgb, 255:red, 0; green, 0; blue, 0 }  ,fill opacity=1 ] (217.33,220.03) .. controls (217.33,215.03) and (221.27,210.98) .. (226.12,210.98) .. controls (230.97,210.98) and (234.9,215.03) .. (234.9,220.03) .. controls (234.9,225.02) and (230.97,229.07) .. (226.12,229.07) .. controls (221.27,229.07) and (217.33,225.02) .. (217.33,220.03) -- cycle ;
			%Shape: Ellipse [id:dp1047201957912478] 
			\draw  [fill={rgb, 255:red, 0; green, 0; blue, 0 }  ,fill opacity=1 ] (290.11,218.94) .. controls (290.11,213.95) and (294.04,209.9) .. (298.89,209.9) .. controls (303.74,209.9) and (307.67,213.95) .. (307.67,218.94) .. controls (307.67,223.94) and (303.74,227.99) .. (298.89,227.99) .. controls (294.04,227.99) and (290.11,223.94) .. (290.11,218.94) -- cycle ;
			%Shape: Ellipse [id:dp4175039088738466] 
			\draw  [fill={rgb, 255:red, 0; green, 0; blue, 0 }  ,fill opacity=1 ] (431.44,217.86) .. controls (431.44,212.86) and (435.37,208.81) .. (440.22,208.81) .. controls (445.07,208.81) and (449,212.86) .. (449,217.86) .. controls (449,222.85) and (445.07,226.9) .. (440.22,226.9) .. controls (435.37,226.9) and (431.44,222.85) .. (431.44,217.86) -- cycle ;
			%Straight Lines [id:da42363136661863243] 
			\draw    (298.89,218.94) -- (226.12,220.03) ;
			%Straight Lines [id:da1875296369403272] 
			\draw  [dash pattern={on 0.84pt off 2.51pt}]  (440.22,217.86) -- (298.89,218.94) ;
			%Shape: Ellipse [id:dp7990433334058031] 
			\draw  [fill={rgb, 255:red, 0; green, 0; blue, 0 }  ,fill opacity=1 ] (216.33,131.03) .. controls (216.33,126.03) and (220.27,121.98) .. (225.12,121.98) .. controls (229.97,121.98) and (233.9,126.03) .. (233.9,131.03) .. controls (233.9,136.02) and (229.97,140.07) .. (225.12,140.07) .. controls (220.27,140.07) and (216.33,136.02) .. (216.33,131.03) -- cycle ;
			%Shape: Ellipse [id:dp22560949067737623] 
			\draw  [fill={rgb, 255:red, 0; green, 0; blue, 0 }  ,fill opacity=1 ] (289.11,129.94) .. controls (289.11,124.95) and (293.04,120.9) .. (297.89,120.9) .. controls (302.74,120.9) and (306.67,124.95) .. (306.67,129.94) .. controls (306.67,134.94) and (302.74,138.99) .. (297.89,138.99) .. controls (293.04,138.99) and (289.11,134.94) .. (289.11,129.94) -- cycle ;
			%Shape: Ellipse [id:dp647371503689542] 
			\draw  [fill={rgb, 255:red, 0; green, 0; blue, 0 }  ,fill opacity=1 ] (430.44,128.86) .. controls (430.44,123.86) and (434.37,119.81) .. (439.22,119.81) .. controls (444.07,119.81) and (448,123.86) .. (448,128.86) .. controls (448,133.85) and (444.07,137.9) .. (439.22,137.9) .. controls (434.37,137.9) and (430.44,133.85) .. (430.44,128.86) -- cycle ;
			%Straight Lines [id:da6484413564234863] 
			\draw    (297.89,129.94) -- (225.12,131.03) ;
			%Straight Lines [id:da7807329615543441] 
			\draw  [dash pattern={on 0.84pt off 2.51pt}]  (439.22,128.86) -- (297.89,129.94) ;
			%Shape: Ellipse [id:dp5487918478270626] 
			\draw  [fill={rgb, 255:red, 0; green, 0; blue, 0 }  ,fill opacity=1 ] (215.33,40.03) .. controls (215.33,35.03) and (219.27,30.98) .. (224.12,30.98) .. controls (228.97,30.98) and (232.9,35.03) .. (232.9,40.03) .. controls (232.9,45.02) and (228.97,49.07) .. (224.12,49.07) .. controls (219.27,49.07) and (215.33,45.02) .. (215.33,40.03) -- cycle ;
			%Shape: Ellipse [id:dp001806361777132226] 
			\draw  [fill={rgb, 255:red, 0; green, 0; blue, 0 }  ,fill opacity=1 ] (288.11,38.94) .. controls (288.11,33.95) and (292.04,29.9) .. (296.89,29.9) .. controls (301.74,29.9) and (305.67,33.95) .. (305.67,38.94) .. controls (305.67,43.94) and (301.74,47.99) .. (296.89,47.99) .. controls (292.04,47.99) and (288.11,43.94) .. (288.11,38.94) -- cycle ;
			%Shape: Ellipse [id:dp11155007010436657] 
			\draw  [fill={rgb, 255:red, 0; green, 0; blue, 0 }  ,fill opacity=1 ] (429.44,37.86) .. controls (429.44,32.86) and (433.37,28.81) .. (438.22,28.81) .. controls (443.07,28.81) and (447,32.86) .. (447,37.86) .. controls (447,42.85) and (443.07,46.9) .. (438.22,46.9) .. controls (433.37,46.9) and (429.44,42.85) .. (429.44,37.86) -- cycle ;
			%Straight Lines [id:da6664972778280742] 
			\draw    (296.89,38.94) -- (224.12,40.03) ;
			%Straight Lines [id:da898068467137938] 
			\draw  [dash pattern={on 0.84pt off 2.51pt}]  (438.22,37.86) -- (296.89,38.94) ;
			%Shape: Ellipse [id:dp3268001406972614] 
			\draw  [fill={rgb, 255:red, 0; green, 0; blue, 0 }  ,fill opacity=1 ] (115.33,132.03) .. controls (115.33,127.03) and (119.27,122.98) .. (124.12,122.98) .. controls (128.97,122.98) and (132.9,127.03) .. (132.9,132.03) .. controls (132.9,137.02) and (128.97,141.07) .. (124.12,141.07) .. controls (119.27,141.07) and (115.33,137.02) .. (115.33,132.03) -- cycle ;
			%Shape: Ellipse [id:dp15241186801060191] 
			\draw  [fill={rgb, 255:red, 0; green, 0; blue, 0 }  ,fill opacity=1 ] (533.33,128.03) .. controls (533.33,123.03) and (537.27,118.98) .. (542.12,118.98) .. controls (546.97,118.98) and (550.9,123.03) .. (550.9,128.03) .. controls (550.9,133.02) and (546.97,137.07) .. (542.12,137.07) .. controls (537.27,137.07) and (533.33,133.02) .. (533.33,128.03) -- cycle ;
			%Straight Lines [id:da8676642681219398] 
			\draw    (124.12,132.03) -- (226.12,220.03) ;
			%Straight Lines [id:da6437411907684805] 
			\draw    (438.22,37.86) -- (542.12,128.03) ;
			%Straight Lines [id:da5440728880309575] 
			\draw    (224.12,40.03) -- (124.12,132.03) ;
			%Straight Lines [id:da4549506118320621] 
			\draw    (542.12,128.03) -- (440.22,217.86) ;
			%Straight Lines [id:da33855317471187285] 
			\draw    (225.12,131.03) -- (124.12,132.03) ;
			%Straight Lines [id:da3825804658581089] 
			\draw    (542.12,128.03) -- (439.22,128.86) ;
			
			% Text Node
			\draw (216.46,233.63) node [anchor=north west][inner sep=0.75pt]    {$w_{1}$};
			% Text Node
			\draw (290.87,234.47) node [anchor=north west][inner sep=0.75pt]    {$w_{2}$};
			% Text Node
			\draw (432.92,233.63) node [anchor=north west][inner sep=0.75pt]    {$w_{r-1}$};
			% Text Node
			\draw (215.46,144.63) node [anchor=north west][inner sep=0.75pt]    {$v_{1}$};
			% Text Node
			\draw (289.87,144.47) node [anchor=north west][inner sep=0.75pt]    {$v_{2}$};
			% Text Node
			\draw (431.92,144.63) node [anchor=north west][inner sep=0.75pt]    {$v_{q-1}$};
			% Text Node
			\draw (214.46,13.63) node [anchor=north west][inner sep=0.75pt]    {$u_{1}$};
			% Text Node
			\draw (288.87,14.47) node [anchor=north west][inner sep=0.75pt]    {$u_{2}$};
			% Text Node
			\draw (422.27,13.47) node [anchor=north west][inner sep=0.75pt]    {$u_{p-1}$};
			% Text Node
			\draw (15,100.4) node [anchor=north west][inner sep=0.75pt]    {$u_{0} =v_{0} =w_{0} =u$};
			% Text Node
			\draw (523,155.4) node [anchor=north west][inner sep=0.75pt]    {$u_{p} =v_{q} =w_{r} =v$};

		\end{tikzpicture}
	}
	\caption{$\Theta$-graph with paths of lengths $p$, $q$, and $r$}
	\label{thetasedlar}
\end{figure}

Considering the definitions of bicyclic graphs of type-III and $\Theta$-graphs, we see that they are the same graphs. We now state a result from \cite{10142411} and provide a counterexample.

\begin{Theorem}{\cite[Theorem 2]{10142411}}\label{sedlarthm}
	\enquote{Let $\G$ be a $\Theta$-graph such that $\G \neq \Theta_{p,p,p}$ and $\Theta_{p,p,p+2}$ with $p \geq 2$. Then, $\beta(\G)=2$.}
\end{Theorem}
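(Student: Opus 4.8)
The plan is the familiar two-sided argument: bound $\beta(\G)$ below by $2$, and then, for every $\Theta$-graph other than the two exceptional families, exhibit a resolving set of size~$2$. The lower bound is immediate: a $\Theta$-graph has vertices of degree~$3$, hence is not a path, so Theorem~\ref{basic}(a) forbids $\beta(\G)=1$; being connected on more than one vertex, $\beta(\G)\geq 2$. All the real work is in the upper bound, that is, in choosing the right pair of landmarks in each case.

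Keeping the notation of Figure~\ref{thetasedlar}, write the path lengths as $p\leq q\leq r$, so $d(u,v)=p$, and first test $W=\{u,v\}$. An interior vertex of a path $\P_i$ of length $\ell_i$ at distance $a$ from $u$ then has representation $\bigl(\min(a,\ \ell_i-a+p),\ \min(\ell_i-a,\ a+p)\bigr)$; on a fixed path each family is strictly monotone up to its wrap-around point, so $W$ always separates vertices lying on a common path. Hence the only way $W$ can fail is a clash between interior vertices of two distinct paths, and a short check shows this occurs exactly when two of $p,q,r$ coincide (equal positions on the two equal paths clash) or when $q\geq p+2$ (the vertices at distance~$1$ from $u$ on $\P_2$ and on $\P_3$ both receive $(1,p+1)$). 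Thus $W=\{u,v\}$ resolves $\G$ precisely when $q=p+1<r$, which settles one infinite family; for every other admissible $\G$ one must pass to an asymmetric landmark set.

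The heart of the proof is then a case analysis on the multiplicities of $\{p,q,r\}$: (i)~$p<q<r$ with $q\geq p+2$; (ii)~$p=q<r$; (iii)~$p<q=r$; the case $p=q=r$ being excluded. In (ii) and (iii) there is an automorphism $\sigma$ swapping the two equal paths and fixing everything else, so no resolving set can miss the interiors of both equal paths; one therefore places one landmark on such a path and a second landmark --- a branch vertex, a landmark on the third path, or a landmark on the other equal path --- with the successful choice depending on the exact lengths (for instance $\{v_1,w_1\}$ resolves $\Theta_{p,p+2,p+2}$, whereas for $\Theta_{p,p+1,p+1}$ one uses a landmark on $\P_1$ together with one on $\P_2$). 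For each candidate one writes out the three per-path families of representations --- on each path of the shape $(a\pm c,\,a)$ for a path-dependent shift $c$ --- and verifies that the three families, together with the two landmarks and the branch vertices, are pairwise distinct. The bookkeeping is elementary but must track the sizes and parities of the shifts, which is exactly why shifts of~$2$ turn out to be special.

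The step I expect to be the genuine obstacle is twofold, and it is not the production of a resolving set in any single case: it is (a) organising the sub-cases so that a single clean choice of landmarks can be made uniformly in each, and (b) certifying that the \emph{only} $\Theta$-graphs with $\beta=3$ are $\Theta_{p,p,p}$ and $\Theta_{p,p,p+2}$. Case~(i) and the ``long equal paths'' part of (iii) go through cleanly, but the short and near-short configurations with a repeated length are delicate, since $\Theta_{p,p,p}$ and $\Theta_{p,p,p+2}$ are genuinely exceptional and lie right at the boundary; excluding any further exception requires actually checking that the proposed two-element set separates everything in every remaining such configuration, rather than asserting it. A loose treatment there can let an additional exceptional family slip through undetected --- and since the stated aim of the present article is to revisit exactly this point, this is where I would expect the original argument to be incomplete.
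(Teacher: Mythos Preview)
The paper does not prove this statement at all: Theorem~\ref{sedlarthm} is a \emph{quotation} from \cite{10142411}, and the paragraphs that follow it in the paper are devoted to \emph{refuting} it, not proving it. The paper exhibits $\Theta_{4,6,4}$, checks that no two-element subset resolves it while $\{u_1,u_2,v_2\}$ does, and concludes $\beta(\Theta_{4,6,4})=3$ in contradiction to the quoted claim. So there is no proof in the paper for your sketch to be compared against; your proposal is aimed at establishing a statement the paper is arguing is false.

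Your closing paragraph essentially predicts this: you flag that the ``short and near-short configurations with a repeated length'' are where an extra exceptional family could slip past, and that the present article's purpose is to revisit exactly this point. That is precisely what happens --- the paper asserts that $\Theta_{p,p+2,p}$ (middle path two longer than the two equal outer paths) is an additional $\beta=3$ family, later deduced formally as the Corollary to Theorem~\ref{Theorem 4}. Your outline of the two-sided argument and the per-path representation bookkeeping is the right shape for establishing a \emph{corrected} version of the statement, and indeed the paper's own Theorems~\ref{Theorem 1}--\ref{Theorem 4} carry out exactly that kind of case-by-case construction of two-element resolving sets (with different landmark choices in each regime). But as a proof of Theorem~\ref{sedlarthm} as stated, your sketch cannot be completed, and you correctly identify where and why.
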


Let us consider the graph $\G$ given in the following figure.

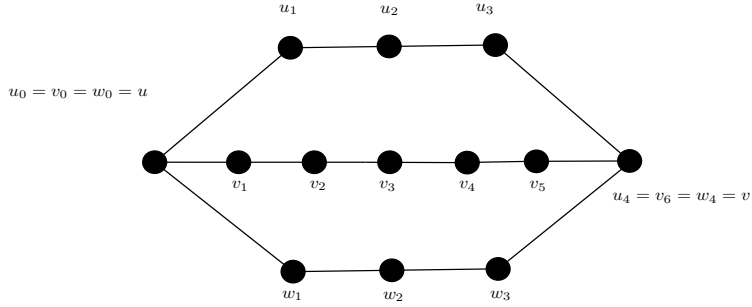
\begin{figure}[H]
	\centering
	\tikzset{every picture/.style={line width=0.75pt}} %set default line width to 0.75pt        
	\resizebox{10cm}{4cm}{%
		\begin{tikzpicture}[x=0.75pt,y=0.75pt,yscale=-1,xscale=1]
			%uncomment if require: \path (0,300); %set diagram left start at 0, and has height of 300
			
			%Shape: Ellipse [id:dp16191451903522203] 
			\draw  [fill={rgb, 255:red, 0; green, 0; blue, 0 }  ,fill opacity=1 ] (264.33,248.96) .. controls (264.33,243.96) and (268.27,239.91) .. (273.12,239.91) .. controls (277.97,239.91) and (281.9,243.96) .. (281.9,248.96) .. controls (281.9,253.95) and (277.97,258) .. (273.12,258) .. controls (268.27,258) and (264.33,253.95) .. (264.33,248.96) -- cycle ;
			%Shape: Ellipse [id:dp41902769302093623] 
			\draw  [fill={rgb, 255:red, 0; green, 0; blue, 0 }  ,fill opacity=1 ] (337.11,247.87) .. controls (337.11,242.88) and (341.04,238.83) .. (345.89,238.83) .. controls (350.74,238.83) and (354.67,242.88) .. (354.67,247.87) .. controls (354.67,252.87) and (350.74,256.92) .. (345.89,256.92) .. controls (341.04,256.92) and (337.11,252.87) .. (337.11,247.87) -- cycle ;
			%Shape: Ellipse [id:dp22618931337268] 
			\draw  [fill={rgb, 255:red, 0; green, 0; blue, 0 }  ,fill opacity=1 ] (415.44,246.79) .. controls (415.44,241.79) and (419.37,237.74) .. (424.22,237.74) .. controls (429.07,237.74) and (433,241.79) .. (433,246.79) .. controls (433,251.78) and (429.07,255.83) .. (424.22,255.83) .. controls (419.37,255.83) and (415.44,251.78) .. (415.44,246.79) -- cycle ;
			%Straight Lines [id:da8862607107789615] 
			\draw    (345.89,247.87) -- (273.12,248.96) ;
			%Shape: Ellipse [id:dp09377870104797847] 
			\draw  [fill={rgb, 255:red, 0; green, 0; blue, 0 }  ,fill opacity=1 ] (224.22,161) .. controls (224.22,156) and (228.15,151.95) .. (233,151.95) .. controls (237.85,151.95) and (241.78,156) .. (241.78,161) .. controls (241.78,166) and (237.85,170.05) .. (233,170.05) .. controls (228.15,170.05) and (224.22,166) .. (224.22,161) -- cycle ;
			%Shape: Ellipse [id:dp5081861582441105] 
			\draw  [fill={rgb, 255:red, 0; green, 0; blue, 0 }  ,fill opacity=1 ] (280,161) .. controls (280,156) and (283.93,151.95) .. (288.78,151.95) .. controls (293.63,151.95) and (297.56,156) .. (297.56,161) .. controls (297.56,166) and (293.63,170.05) .. (288.78,170.05) .. controls (283.93,170.05) and (280,166) .. (280,161) -- cycle ;
			%Shape: Ellipse [id:dp43400477851400643] 
			\draw  [fill={rgb, 255:red, 0; green, 0; blue, 0 }  ,fill opacity=1 ] (443.65,160.29) .. controls (443.65,155.29) and (447.58,151.24) .. (452.43,151.24) .. controls (457.28,151.24) and (461.21,155.29) .. (461.21,160.29) .. controls (461.21,165.28) and (457.28,169.33) .. (452.43,169.33) .. controls (447.58,169.33) and (443.65,165.28) .. (443.65,160.29) -- cycle ;
			%Straight Lines [id:da57487851295782] 
			\draw    (279.78,161) -- (233,161) ;
			%Shape: Ellipse [id:dp5376228641544838] 
			\draw  [fill={rgb, 255:red, 0; green, 0; blue, 0 }  ,fill opacity=1 ] (262.33,68.96) .. controls (262.33,63.96) and (266.27,59.91) .. (271.12,59.91) .. controls (275.97,59.91) and (279.9,63.96) .. (279.9,68.96) .. controls (279.9,73.95) and (275.97,78) .. (271.12,78) .. controls (266.27,78) and (262.33,73.95) .. (262.33,68.96) -- cycle ;
			%Shape: Ellipse [id:dp5942714822753774] 
			\draw  [fill={rgb, 255:red, 0; green, 0; blue, 0 }  ,fill opacity=1 ] (335.11,67.87) .. controls (335.11,62.88) and (339.04,58.83) .. (343.89,58.83) .. controls (348.74,58.83) and (352.67,62.88) .. (352.67,67.87) .. controls (352.67,72.87) and (348.74,76.92) .. (343.89,76.92) .. controls (339.04,76.92) and (335.11,72.87) .. (335.11,67.87) -- cycle ;
			%Shape: Ellipse [id:dp657035198850213] 
			\draw  [fill={rgb, 255:red, 0; green, 0; blue, 0 }  ,fill opacity=1 ] (413.44,66.79) .. controls (413.44,61.79) and (417.37,57.74) .. (422.22,57.74) .. controls (427.07,57.74) and (431,61.79) .. (431,66.79) .. controls (431,71.78) and (427.07,75.83) .. (422.22,75.83) .. controls (417.37,75.83) and (413.44,71.78) .. (413.44,66.79) -- cycle ;
			%Straight Lines [id:da8477930916009733] 
			\draw    (343.89,67.87) -- (271.12,68.96) ;
			%Shape: Ellipse [id:dp6833863108365512] 
			\draw  [fill={rgb, 255:red, 0; green, 0; blue, 0 }  ,fill opacity=1 ] (162.33,160.96) .. controls (162.33,155.96) and (166.27,151.91) .. (171.12,151.91) .. controls (175.97,151.91) and (179.9,155.96) .. (179.9,160.96) .. controls (179.9,165.95) and (175.97,170) .. (171.12,170) .. controls (166.27,170) and (162.33,165.95) .. (162.33,160.96) -- cycle ;
			%Shape: Ellipse [id:dp6305830285842267] 
			\draw  [fill={rgb, 255:red, 0; green, 0; blue, 0 }  ,fill opacity=1 ] (512.33,159.96) .. controls (512.33,154.96) and (516.27,150.91) .. (521.12,150.91) .. controls (525.97,150.91) and (529.9,154.96) .. (529.9,159.96) .. controls (529.9,164.95) and (525.97,169) .. (521.12,169) .. controls (516.27,169) and (512.33,164.95) .. (512.33,159.96) -- cycle ;
			%Straight Lines [id:da729356254059131] 
			\draw    (171.12,160.96) -- (273.12,248.96) ;
			%Straight Lines [id:da24413337076384267] 
			\draw    (422.22,66.79) -- (521.12,159.96) ;
			%Straight Lines [id:da9411866014002728] 
			\draw    (271.12,68.96) -- (171.12,160.96) ;
			%Straight Lines [id:da9905514955075014] 
			\draw    (526.12,156.96) -- (424.22,246.79) ;
			%Straight Lines [id:da959281441271997] 
			\draw    (233,161) -- (171.12,160.96) ;
			%Straight Lines [id:da6624822177106247] 
			\draw    (521.12,159.96) -- (446.43,160.29) ;
			%Straight Lines [id:da8101412744406227] 
			\draw    (416.66,66.79) -- (343.89,67.87) ;
			%Straight Lines [id:da7305346169968814] 
			\draw    (418.66,246.79) -- (345.89,247.87) ;
			%Shape: Ellipse [id:dp16647256944623012] 
			\draw  [fill={rgb, 255:red, 0; green, 0; blue, 0 }  ,fill opacity=1 ] (335.56,161) .. controls (335.56,156) and (339.5,151.95) .. (344.35,151.95) .. controls (349.2,151.95) and (353.13,156) .. (353.13,161) .. controls (353.13,166) and (349.2,170.05) .. (344.35,170.05) .. controls (339.5,170.05) and (335.56,166) .. (335.56,161) -- cycle ;
			%Straight Lines [id:da07753813512504548] 
			\draw    (326.56,161) -- (279.78,161) ;
			%Straight Lines [id:da11572452531501409] 
			\draw    (395.43,161.29) -- (326.56,161) ;
			%Shape: Ellipse [id:dp7020360317039567] 
			\draw  [fill={rgb, 255:red, 0; green, 0; blue, 0 }  ,fill opacity=1 ] (392.65,161.29) .. controls (392.65,156.29) and (396.58,152.24) .. (401.43,152.24) .. controls (406.28,152.24) and (410.21,156.29) .. (410.21,161.29) .. controls (410.21,166.28) and (406.28,170.33) .. (401.43,170.33) .. controls (396.58,170.33) and (392.65,166.28) .. (392.65,161.29) -- cycle ;
			%Straight Lines [id:da6989099598486703] 
			\draw    (446.43,160.29) -- (395.43,161.29) ;
			
			% Text Node
			\draw (263.46,262.56) node [anchor=north west][inner sep=0.75pt]    {$w_{1}$};
			% Text Node
			\draw (337.87,263.4) node [anchor=north west][inner sep=0.75pt]    {$w_{2}$};
			% Text Node
			\draw (416.92,262.56) node [anchor=north west][inner sep=0.75pt]    {$w_{3}$};
			% Text Node
			\draw (226.46,174.56) node [anchor=north west][inner sep=0.75pt]    {$v_{1}$};
			% Text Node
			\draw (283.87,174.4) node [anchor=north west][inner sep=0.75pt]    {$v_{2}$};
			% Text Node
			\draw (334.92,174.56) node [anchor=north west][inner sep=0.75pt]    {$v_{3}$};
			% Text Node
			\draw (261.46,32.56) node [anchor=north west][inner sep=0.75pt]    {$u_{1}$};
			% Text Node
			\draw (335.87,33.4) node [anchor=north west][inner sep=0.75pt]    {$u_{2}$};
			% Text Node
			\draw (406.27,32.4) node [anchor=north west][inner sep=0.75pt]    {$u_{3}$};
			% Text Node
			\draw (62,100.33) node [anchor=north west][inner sep=0.75pt]    {$u_{0} =v_{0} =w_{0} =u$};
			% Text Node
			\draw (507,184.33) node [anchor=north west][inner sep=0.75pt]    {$u_{4} =v_{6} =w_{4} =v$};
			% Text Node
			\draw (393.92,174.56) node [anchor=north west][inner sep=0.75pt]    {$v_{4}$};
			% Text Node
			\draw (445.92,174.56) node [anchor=north west][inner sep=0.75pt]    {$v_{5}$};

		\end{tikzpicture}
	}
	\caption{$\Theta_{4,6,4}$-graph}
	\label{theta464}
\end{figure}

Since the paths $\P_1, \P_2,$ and $\P_3$ are of lengths $4, 6, $ and $4$ respectively, we see that $\G \simeq \Theta_{4,6,4}$. Again, since $\G \neq \Theta_{p,p,p}$ and $\G\neq \Theta_{p,p,p+2}$ with $p \geq 2$, by Theorem \ref{sedlarthm}, $\beta(\G)=\beta(\Theta_{4,6,4})=2$. A very basic computer program can verify that no resolving set of cardinality $2$ exists for this graph.

On the other hand, if we consider the set $\W=\{u_1,u_2,v_2\} \subset \V(\G)$, it is an easy task to show
that the set $\W$ is indeed a resolving set for $\G$. Moreover, if we remove any vertex from $\W$, the remaining set is not a resolving set, e.g., removing $u_1$, we see that the vertices $v_5$ and $w_1$ are not resolved by $\W-u_1$. A similar argument is applicable to $u_2$ and $v_2$. This ensures that $\W$ is a minimal resolving set of $\G$ and $\beta(\G)=3$.

This example is at odds with Theorem \ref{sedlarthm}. Using this example as a base, we conjecture that $\beta(\Theta_{p,p+2,p})=3$. We will prove this claim later on.

Returning to the notation defined for $\Theta$-graphs, it was specified that $p \leq q \leq r$. This restriction on the values of $p,q$ and $r$ eliminates whole classes of $\Theta$-graphs from the discussion. These are given in the following.

\begin{itemize}
	\item $\Theta$-graphs where $p \leq q$ but $q \nleq r$, then obviously, $q >r$. We can subdivide it into two possibilities, $q \geq p > r$, and $q > r \geq p$.
	\item $\Theta$-graphs for which $\P_1$ and $\P_3$ are of equal length $(p=r)$ and $\P_2$ is unrestricted ($q$ may be less, greater or equal to $p$).
\end{itemize}

Although some classes are missing in the proofs of \cite{10142411}, a great many of the classes have already been solved there. For the sake of completeness, we consider all classes of $\Theta$-graphs in this study. Our proofs not only serve as a verification for their results but also fill in the gaps present in that study.

\subsection{Notations}

For the purpose of this article, we will be using the following notation for $\Theta$-graphs, henceforth labelled as \textit{bicyclic graph of type-III}.

Let $\C_{p,q,r}$ be a graph constructed from three distinct paths $\P_1$ , $\P_2$ and $\P_3$ having vertices $p, q,$ and $r$ respectively. Starting and ending vertices of $\P_1$ and $\P_3$ are then connected to the starting and ending vertices of $\P_2$, respectively. If we denote the vertices as $v_1,v_2, \cdots, v_{p+q+r}$, then this graph is given in the following figure. 

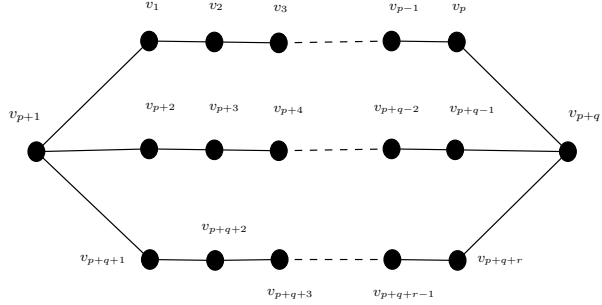
\begin{figure}[H]
	\centering
	\tikzset{every picture/.style={line width=0.75pt}} %set default line width to 0.75pt        
	\resizebox{8cm}{4cm}{%
		
		\begin{tikzpicture}[x=0.75pt,y=0.75pt,yscale=-1,xscale=1]
			%uncomment if require: \path (0,300); %set diagram left start at 0, and has height of 300
			
			%Shape: Ellipse [id:dp16635427090155708] 
			\draw  [fill={rgb, 255:red, 0; green, 0; blue, 0 }  ,fill opacity=1 ] (202.63,51.49) .. controls (202.63,46.92) and (205.6,43.21) .. (209.26,43.21) .. controls (212.92,43.21) and (215.89,46.92) .. (215.89,51.49) .. controls (215.89,56.07) and (212.92,59.77) .. (209.26,59.77) .. controls (205.6,59.77) and (202.63,56.07) .. (202.63,51.49) -- cycle ;
			%Shape: Ellipse [id:dp8802797495475787] 
			\draw  [fill={rgb, 255:red, 0; green, 0; blue, 0 }  ,fill opacity=1 ] (254.56,52.27) .. controls (254.56,47.69) and (257.52,43.99) .. (261.19,43.99) .. controls (264.85,43.99) and (267.82,47.69) .. (267.82,52.27) .. controls (267.82,56.84) and (264.85,60.55) .. (261.19,60.55) .. controls (257.52,60.55) and (254.56,56.84) .. (254.56,52.27) -- cycle ;
			%Shape: Ellipse [id:dp3846254394033439] 
			\draw  [fill={rgb, 255:red, 0; green, 0; blue, 0 }  ,fill opacity=1 ] (305.84,53.04) .. controls (305.84,48.46) and (308.81,44.76) .. (312.47,44.76) .. controls (316.14,44.76) and (319.1,48.46) .. (319.1,53.04) .. controls (319.1,57.61) and (316.14,61.32) .. (312.47,61.32) .. controls (308.81,61.32) and (305.84,57.61) .. (305.84,53.04) -- cycle ;
			%Shape: Ellipse [id:dp8282056851105979] 
			\draw  [fill={rgb, 255:red, 0; green, 0; blue, 0 }  ,fill opacity=1 ] (395.6,51.49) .. controls (395.6,46.92) and (398.56,43.21) .. (402.23,43.21) .. controls (405.89,43.21) and (408.86,46.92) .. (408.86,51.49) .. controls (408.86,56.07) and (405.89,59.77) .. (402.23,59.77) .. controls (398.56,59.77) and (395.6,56.07) .. (395.6,51.49) -- cycle ;
			%Shape: Ellipse [id:dp9070970416713975] 
			\draw  [fill={rgb, 255:red, 0; green, 0; blue, 0 }  ,fill opacity=1 ] (447.52,52.27) .. controls (447.52,47.69) and (450.49,43.99) .. (454.15,43.99) .. controls (457.82,43.99) and (460.78,47.69) .. (460.78,52.27) .. controls (460.78,56.84) and (457.82,60.55) .. (454.15,60.55) .. controls (450.49,60.55) and (447.52,56.84) .. (447.52,52.27) -- cycle ;
			%Straight Lines [id:da8736722249593427] 
			\draw    (209.26,51.49) -- (261.19,52.27) ;
			%Straight Lines [id:da3293275737875725] 
			\draw    (261.19,52.27) -- (312.47,53.04) ;
			%Straight Lines [id:da3018374723956687] 
			\draw    (402.23,51.49) -- (454.15,52.27) ;
			%Straight Lines [id:da12412569017877817] 
			\draw  [dash pattern={on 4.5pt off 4.5pt}]  (313.11,53.04) -- (402.23,51.49) ;
			%Shape: Ellipse [id:dp18426623253223107] 
			\draw  [fill={rgb, 255:red, 0; green, 0; blue, 0 }  ,fill opacity=1 ] (202.63,143.32) .. controls (202.63,138.74) and (205.6,135.03) .. (209.26,135.03) .. controls (212.92,135.03) and (215.89,138.74) .. (215.89,143.32) .. controls (215.89,147.89) and (212.92,151.6) .. (209.26,151.6) .. controls (205.6,151.6) and (202.63,147.89) .. (202.63,143.32) -- cycle ;
			%Shape: Ellipse [id:dp3700294851992061] 
			\draw  [fill={rgb, 255:red, 0; green, 0; blue, 0 }  ,fill opacity=1 ] (254.56,144.09) .. controls (254.56,139.51) and (257.52,135.81) .. (261.19,135.81) .. controls (264.85,135.81) and (267.82,139.51) .. (267.82,144.09) .. controls (267.82,148.66) and (264.85,152.37) .. (261.19,152.37) .. controls (257.52,152.37) and (254.56,148.66) .. (254.56,144.09) -- cycle ;
			%Shape: Ellipse [id:dp7253037431435962] 
			\draw  [fill={rgb, 255:red, 0; green, 0; blue, 0 }  ,fill opacity=1 ] (305.84,144.86) .. controls (305.84,140.29) and (308.81,136.58) .. (312.47,136.58) .. controls (316.14,136.58) and (319.1,140.29) .. (319.1,144.86) .. controls (319.1,149.43) and (316.14,153.14) .. (312.47,153.14) .. controls (308.81,153.14) and (305.84,149.43) .. (305.84,144.86) -- cycle ;
			%Shape: Ellipse [id:dp2540205298297442] 
			\draw  [fill={rgb, 255:red, 0; green, 0; blue, 0 }  ,fill opacity=1 ] (395.6,143.32) .. controls (395.6,138.74) and (398.56,135.03) .. (402.23,135.03) .. controls (405.89,135.03) and (408.86,138.74) .. (408.86,143.32) .. controls (408.86,147.89) and (405.89,151.6) .. (402.23,151.6) .. controls (398.56,151.6) and (395.6,147.89) .. (395.6,143.32) -- cycle ;
			%Shape: Ellipse [id:dp38088877181040215] 
			\draw  [fill={rgb, 255:red, 0; green, 0; blue, 0 }  ,fill opacity=1 ] (446.24,144.09) .. controls (446.24,139.51) and (449.21,135.81) .. (452.87,135.81) .. controls (456.53,135.81) and (459.5,139.51) .. (459.5,144.09) .. controls (459.5,148.66) and (456.53,152.37) .. (452.87,152.37) .. controls (449.21,152.37) and (446.24,148.66) .. (446.24,144.09) -- cycle ;
			%Straight Lines [id:da4301962026653905] 
			\draw    (209.26,143.32) -- (261.19,144.09) ;
			%Straight Lines [id:da267535483401907] 
			\draw    (261.19,144.09) -- (312.47,144.86) ;
			%Straight Lines [id:da4002191182966446] 
			\draw    (402.23,143.32) -- (452.87,144.09) ;
			%Straight Lines [id:da4889568533501578] 
			\draw  [dash pattern={on 4.5pt off 4.5pt}]  (313.11,144.86) -- (402.23,143.32) ;
			%Shape: Ellipse [id:dp7891577231502005] 
			\draw  [fill={rgb, 255:red, 0; green, 0; blue, 0 }  ,fill opacity=1 ] (203.27,237.45) .. controls (203.27,232.88) and (206.24,229.17) .. (209.9,229.17) .. controls (213.56,229.17) and (216.53,232.88) .. (216.53,237.45) .. controls (216.53,242.02) and (213.56,245.73) .. (209.9,245.73) .. controls (206.24,245.73) and (203.27,242.02) .. (203.27,237.45) -- cycle ;
			%Shape: Ellipse [id:dp2915666142254518] 
			\draw  [fill={rgb, 255:red, 0; green, 0; blue, 0 }  ,fill opacity=1 ] (255.2,238.22) .. controls (255.2,233.65) and (258.17,229.94) .. (261.83,229.94) .. controls (265.49,229.94) and (268.46,233.65) .. (268.46,238.22) .. controls (268.46,242.8) and (265.49,246.5) .. (261.83,246.5) .. controls (258.17,246.5) and (255.2,242.8) .. (255.2,238.22) -- cycle ;
			%Shape: Ellipse [id:dp46870374277675464] 
			\draw  [fill={rgb, 255:red, 0; green, 0; blue, 0 }  ,fill opacity=1 ] (306.48,237.45) .. controls (306.48,232.88) and (309.45,229.17) .. (313.11,229.17) .. controls (316.78,229.17) and (319.75,232.88) .. (319.75,237.45) .. controls (319.75,242.02) and (316.78,245.73) .. (313.11,245.73) .. controls (309.45,245.73) and (306.48,242.02) .. (306.48,237.45) -- cycle ;
			%Shape: Ellipse [id:dp47039762381439054] 
			\draw  [fill={rgb, 255:red, 0; green, 0; blue, 0 }  ,fill opacity=1 ] (396.24,237.45) .. controls (396.24,232.88) and (399.21,229.17) .. (402.87,229.17) .. controls (406.53,229.17) and (409.5,232.88) .. (409.5,237.45) .. controls (409.5,242.02) and (406.53,245.73) .. (402.87,245.73) .. controls (399.21,245.73) and (396.24,242.02) .. (396.24,237.45) -- cycle ;
			%Shape: Ellipse [id:dp12922289153319544] 
			\draw  [fill={rgb, 255:red, 0; green, 0; blue, 0 }  ,fill opacity=1 ] (448.17,238.22) .. controls (448.17,233.65) and (451.13,229.94) .. (454.8,229.94) .. controls (458.46,229.94) and (461.43,233.65) .. (461.43,238.22) .. controls (461.43,242.8) and (458.46,246.5) .. (454.8,246.5) .. controls (451.13,246.5) and (448.17,242.8) .. (448.17,238.22) -- cycle ;
			%Straight Lines [id:da9291545993801622] 
			\draw    (209.9,237.45) -- (261.83,238.22) ;
			%Straight Lines [id:da752735262960353] 
			\draw    (261.83,238.22) -- (313.11,237.45) ;
			%Straight Lines [id:da1137638317630345] 
			\draw    (402.87,237.45) -- (454.8,238.22) ;
			%Straight Lines [id:da8729478133992548] 
			\draw  [dash pattern={on 4.5pt off 4.5pt}]  (313.11,237.45) -- (402.87,237.45) ;
			%Shape: Ellipse [id:dp7986787253571883] 
			\draw  [fill={rgb, 255:red, 0; green, 0; blue, 0 }  ,fill opacity=1 ] (535.99,145.63) .. controls (535.99,141.06) and (538.96,137.35) .. (542.63,137.35) .. controls (546.29,137.35) and (549.26,141.06) .. (549.26,145.63) .. controls (549.26,150.2) and (546.29,153.91) .. (542.63,153.91) .. controls (538.96,153.91) and (535.99,150.2) .. (535.99,145.63) -- cycle ;
			%Shape: Ellipse [id:dp3024124106838686] 
			\draw  [fill={rgb, 255:red, 0; green, 0; blue, 0 }  ,fill opacity=1 ] (112.87,145.63) .. controls (112.87,141.06) and (115.84,137.35) .. (119.51,137.35) .. controls (123.17,137.35) and (126.14,141.06) .. (126.14,145.63) .. controls (126.14,150.2) and (123.17,153.91) .. (119.51,153.91) .. controls (115.84,153.91) and (112.87,150.2) .. (112.87,145.63) -- cycle ;
			%Straight Lines [id:da17283790954788092] 
			\draw    (119.51,145.63) -- (209.9,237.45) ;
			%Straight Lines [id:da978542803751318] 
			\draw    (454.15,52.27) -- (542.63,145.63) ;
			%Straight Lines [id:da3110620361542831] 
			\draw    (209.26,51.49) -- (119.51,145.63) ;
			%Straight Lines [id:da24570502556348273] 
			\draw    (542.63,145.63) -- (454.8,238.22) ;
			%Straight Lines [id:da7019535169702733] 
			\draw    (119.51,145.63) -- (209.26,143.32) ;
			%Straight Lines [id:da9516218219812953] 
			\draw    (452.87,144.09) -- (542.63,145.63) ;
			
			% Text Node
			\draw (204.92,16.73) node [anchor=north west][inner sep=0.75pt]  [font=\small]  {$v_{1}$};
			% Text Node
			\draw (255.57,17.5) node [anchor=north west][inner sep=0.75pt]  [font=\small]  {$v_{2}$};
			% Text Node
			\draw (306.85,19.04) node [anchor=north west][inner sep=0.75pt]  [font=\small]  {$v_{3}$};
			% Text Node
			\draw (398.94,18.27) node [anchor=north west][inner sep=0.75pt]  [font=\small]  {$v_{p-1}$};
			% Text Node
			\draw (448.95,18.36) node [anchor=north west][inner sep=0.75pt]  [font=\small]  {$v_{p}$};
			% Text Node
			\draw (204.78,102.55) node [anchor=north west][inner sep=0.75pt]  [font=\small]  {$v_{p+2}$};
			% Text Node
			\draw (255.43,103.32) node [anchor=north west][inner sep=0.75pt]  [font=\small]  {$v_{p+3}$};
			% Text Node
			\draw (306.71,104.86) node [anchor=north west][inner sep=0.75pt]  [font=\small]  {$v_{p+4}$};
			% Text Node
			\draw (386.52,104.09) node [anchor=north west][inner sep=0.75pt]  [font=\small]  {$v_{p+q-2}$};
			% Text Node
			\draw (446.83,105.18) node [anchor=north west][inner sep=0.75pt]  [font=\small]  {$v_{p+q-1}$};
			% Text Node
			\draw (152.86,232.04) node [anchor=north west][inner sep=0.75pt]  [font=\small]  {$v_{p+q+1}$};
			% Text Node
			\draw (249.5,205.81) node [anchor=north west][inner sep=0.75pt]  [font=\small]  {$v_{p+q+2}$};
			% Text Node
			\draw (301.79,261.36) node [anchor=north west][inner sep=0.75pt]  [font=\small]  {$v_{p+q+3}$};
			% Text Node
			\draw (386.16,261.58) node [anchor=north west][inner sep=0.75pt]  [font=\small]  {$v_{p+q+r-1}$};
			% Text Node
			\draw (469.16,232.67) node [anchor=north west][inner sep=0.75pt]  [font=\small]  {$v_{p+q+r}$};
			% Text Node
			\draw (95.97,109.95) node [anchor=north west][inner sep=0.75pt]    {$v_{p+1}$};
			% Text Node
			\draw (541.51,108.87) node [anchor=north west][inner sep=0.75pt]    {$v_{p+q}$};

		\end{tikzpicture}
		
	}
	\caption{Type-III base bicyclic graph}
	\label{typeiiibicyclic}
\end{figure}	

It can easily be observed that $\C_{p,q,r} \simeq \Theta_{p+1,q-1,r+1}$. This also shows that the paths $\P_1$, $\P_2$ and $\P_3$ are of lengths $p+1$, $q-1$ and $r+1$, respectively.

\section{Results on Bicyclic Graphs Type-III:}

It is emphasized that interchanging the values of $p$ and $r$ in $\C_{p,q,r}$, we get an isomorphic graph, i.e., $\C_{p,q,r} \simeq \C_{r,q,p}$ for fixed $p,r$. This concept is explained in the following figure.

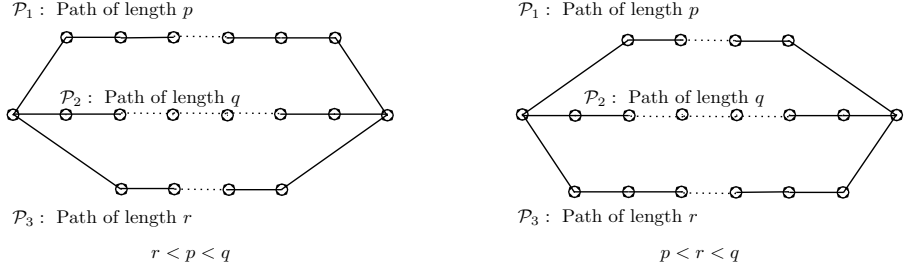
\begin{figure}[H]
	\centering
	\tikzset{every picture/.style={line width=0.75pt}} %set default line width to 0.75pt        
	\resizebox{12cm}{3.5cm}{%
		\begin{tikzpicture}[x=0.75pt,y=0.75pt,yscale=-1,xscale=1]
			%uncomment if require: \path (0,300); %set diagram left start at 0, and has height of 300
			
			%Shape: Ellipse [id:dp21404551466603983] 
			\draw  [fill={rgb, 255:red, 0; green, 0; blue, 0 }  ,fill opacity=1 ][line width=6]  (42.81,119.94) .. controls (42.81,119.7) and (42.99,119.5) .. (43.21,119.5) .. controls (43.44,119.5) and (43.62,119.7) .. (43.62,119.94) .. controls (43.62,120.18) and (43.44,120.37) .. (43.21,120.37) .. controls (42.99,120.37) and (42.81,120.18) .. (42.81,119.94) -- cycle ;
			%Shape: Ellipse [id:dp7250024208717187] 
			\draw  [fill={rgb, 255:red, 0; green, 0; blue, 0 }  ,fill opacity=1 ][line width=6]  (79.75,119.94) .. controls (79.75,119.7) and (79.93,119.5) .. (80.16,119.5) .. controls (80.38,119.5) and (80.56,119.7) .. (80.56,119.94) .. controls (80.56,120.18) and (80.38,120.37) .. (80.16,120.37) .. controls (79.93,120.37) and (79.75,120.18) .. (79.75,119.94) -- cycle ;
			%Shape: Ellipse [id:dp21535716554647588] 
			\draw  [fill={rgb, 255:red, 0; green, 0; blue, 0 }  ,fill opacity=1 ][line width=6]  (117.44,119.94) .. controls (117.44,119.7) and (117.62,119.5) .. (117.85,119.5) .. controls (118.07,119.5) and (118.25,119.7) .. (118.25,119.94) .. controls (118.25,120.18) and (118.07,120.37) .. (117.85,120.37) .. controls (117.62,120.37) and (117.44,120.18) .. (117.44,119.94) -- cycle ;
			%Shape: Ellipse [id:dp37134508763709007] 
			\draw  [fill={rgb, 255:red, 0; green, 0; blue, 0 }  ,fill opacity=1 ][line width=6]  (154.39,119.94) .. controls (154.39,119.7) and (154.57,119.5) .. (154.79,119.5) .. controls (155.01,119.5) and (155.19,119.7) .. (155.19,119.94) .. controls (155.19,120.18) and (155.01,120.37) .. (154.79,120.37) .. controls (154.57,120.37) and (154.39,120.18) .. (154.39,119.94) -- cycle ;
			%Shape: Ellipse [id:dp9501495274412106] 
			\draw  [fill={rgb, 255:red, 0; green, 0; blue, 0 }  ,fill opacity=1 ][line width=6]  (192.45,120.34) .. controls (192.45,120.1) and (192.63,119.9) .. (192.85,119.9) .. controls (193.07,119.9) and (193.26,120.1) .. (193.26,120.34) .. controls (193.26,120.58) and (193.07,120.77) .. (192.85,120.77) .. controls (192.63,120.77) and (192.45,120.58) .. (192.45,120.34) -- cycle ;
			%Shape: Ellipse [id:dp5764550201588585] 
			\draw  [fill={rgb, 255:red, 0; green, 0; blue, 0 }  ,fill opacity=1 ][line width=6]  (229.39,119.54) .. controls (229.39,119.3) and (229.57,119.1) .. (229.8,119.1) .. controls (230.02,119.1) and (230.2,119.3) .. (230.2,119.54) .. controls (230.2,119.77) and (230.02,119.97) .. (229.8,119.97) .. controls (229.57,119.97) and (229.39,119.77) .. (229.39,119.54) -- cycle ;
			%Shape: Ellipse [id:dp07093232242709147] 
			\draw  [fill={rgb, 255:red, 0; green, 0; blue, 0 }  ,fill opacity=1 ][line width=6]  (267.08,119.54) .. controls (267.08,119.3) and (267.26,119.1) .. (267.49,119.1) .. controls (267.71,119.1) and (267.89,119.3) .. (267.89,119.54) .. controls (267.89,119.77) and (267.71,119.97) .. (267.49,119.97) .. controls (267.26,119.97) and (267.08,119.77) .. (267.08,119.54) -- cycle ;
			%Shape: Ellipse [id:dp9746748768922846] 
			\draw  [fill={rgb, 255:red, 0; green, 0; blue, 0 }  ,fill opacity=1 ][line width=6]  (304.03,120.34) .. controls (304.03,120.1) and (304.21,119.9) .. (304.43,119.9) .. controls (304.65,119.9) and (304.83,120.1) .. (304.83,120.34) .. controls (304.83,120.58) and (304.65,120.77) .. (304.43,120.77) .. controls (304.21,120.77) and (304.03,120.58) .. (304.03,120.34) -- cycle ;
			%Shape: Ellipse [id:dp37039164679023195] 
			\draw  [fill={rgb, 255:red, 0; green, 0; blue, 0 }  ,fill opacity=1 ][line width=6]  (80.25,63.84) .. controls (80.25,63.6) and (80.43,63.41) .. (80.65,63.41) .. controls (80.88,63.41) and (81.06,63.6) .. (81.06,63.84) .. controls (81.06,64.08) and (80.88,64.27) .. (80.65,64.27) .. controls (80.43,64.27) and (80.25,64.08) .. (80.25,63.84) -- cycle ;
			%Shape: Ellipse [id:dp33080494485207135] 
			\draw  [fill={rgb, 255:red, 0; green, 0; blue, 0 }  ,fill opacity=1 ][line width=6]  (117.94,63.84) .. controls (117.94,63.6) and (118.12,63.41) .. (118.34,63.41) .. controls (118.57,63.41) and (118.75,63.6) .. (118.75,63.84) .. controls (118.75,64.08) and (118.57,64.27) .. (118.34,64.27) .. controls (118.12,64.27) and (117.94,64.08) .. (117.94,63.84) -- cycle ;
			%Shape: Ellipse [id:dp865976791739713] 
			\draw  [fill={rgb, 255:red, 0; green, 0; blue, 0 }  ,fill opacity=1 ][line width=6]  (154.88,63.84) .. controls (154.88,63.6) and (155.06,63.41) .. (155.29,63.41) .. controls (155.51,63.41) and (155.69,63.6) .. (155.69,63.84) .. controls (155.69,64.08) and (155.51,64.27) .. (155.29,64.27) .. controls (155.06,64.27) and (154.88,64.08) .. (154.88,63.84) -- cycle ;
			%Shape: Ellipse [id:dp25247952795551853] 
			\draw  [fill={rgb, 255:red, 0; green, 0; blue, 0 }  ,fill opacity=1 ][line width=6]  (192.95,64.24) .. controls (192.95,64) and (193.13,63.81) .. (193.35,63.81) .. controls (193.57,63.81) and (193.75,64) .. (193.75,64.24) .. controls (193.75,64.48) and (193.57,64.67) .. (193.35,64.67) .. controls (193.13,64.67) and (192.95,64.48) .. (192.95,64.24) -- cycle ;
			%Shape: Ellipse [id:dp8217803609780121] 
			\draw  [fill={rgb, 255:red, 0; green, 0; blue, 0 }  ,fill opacity=1 ][line width=6]  (229.89,64.24) .. controls (229.89,64) and (230.07,63.81) .. (230.29,63.81) .. controls (230.52,63.81) and (230.7,64) .. (230.7,64.24) .. controls (230.7,64.48) and (230.52,64.67) .. (230.29,64.67) .. controls (230.07,64.67) and (229.89,64.48) .. (229.89,64.24) -- cycle ;
			%Shape: Ellipse [id:dp7874459453524516] 
			\draw  [fill={rgb, 255:red, 0; green, 0; blue, 0 }  ,fill opacity=1 ][line width=6]  (267.58,64.24) .. controls (267.58,64) and (267.76,63.81) .. (267.98,63.81) .. controls (268.21,63.81) and (268.39,64) .. (268.39,64.24) .. controls (268.39,64.48) and (268.21,64.67) .. (267.98,64.67) .. controls (267.76,64.67) and (267.58,64.48) .. (267.58,64.24) -- cycle ;
			%Shape: Ellipse [id:dp7182536945170612] 
			\draw  [fill={rgb, 255:red, 0; green, 0; blue, 0 }  ,fill opacity=1 ][line width=6]  (118.44,173.89) .. controls (118.44,173.66) and (118.62,173.46) .. (118.84,173.46) .. controls (119.06,173.46) and (119.24,173.66) .. (119.24,173.89) .. controls (119.24,174.13) and (119.06,174.33) .. (118.84,174.33) .. controls (118.62,174.33) and (118.44,174.13) .. (118.44,173.89) -- cycle ;
			%Shape: Ellipse [id:dp0047136538729153354] 
			\draw  [fill={rgb, 255:red, 0; green, 0; blue, 0 }  ,fill opacity=1 ][line width=6]  (155.38,173.89) .. controls (155.38,173.66) and (155.56,173.46) .. (155.78,173.46) .. controls (156.01,173.46) and (156.19,173.66) .. (156.19,173.89) .. controls (156.19,174.13) and (156.01,174.33) .. (155.78,174.33) .. controls (155.56,174.33) and (155.38,174.13) .. (155.38,173.89) -- cycle ;
			%Shape: Ellipse [id:dp9139850582308164] 
			\draw  [fill={rgb, 255:red, 0; green, 0; blue, 0 }  ,fill opacity=1 ][line width=6]  (193.44,174.3) .. controls (193.44,174.06) and (193.63,173.86) .. (193.85,173.86) .. controls (194.07,173.86) and (194.25,174.06) .. (194.25,174.3) .. controls (194.25,174.53) and (194.07,174.73) .. (193.85,174.73) .. controls (193.63,174.73) and (193.44,174.53) .. (193.44,174.3) -- cycle ;
			%Shape: Ellipse [id:dp9480289344145885] 
			\draw  [fill={rgb, 255:red, 0; green, 0; blue, 0 }  ,fill opacity=1 ][line width=6]  (230.39,174.3) .. controls (230.39,174.06) and (230.57,173.86) .. (230.79,173.86) .. controls (231.01,173.86) and (231.19,174.06) .. (231.19,174.3) .. controls (231.19,174.53) and (231.01,174.73) .. (230.79,174.73) .. controls (230.57,174.73) and (230.39,174.53) .. (230.39,174.3) -- cycle ;
			%Straight Lines [id:da6114223111722294] 
			\draw    (80.25,63.84) -- (117.94,63.84) ;
			%Straight Lines [id:da361341052732141] 
			\draw    (192.2,64.24) -- (229.89,64.24) ;
			%Straight Lines [id:da589876690059896] 
			\draw    (118.34,64.27) -- (154.88,63.84) ;
			%Straight Lines [id:da4475228736686947] 
			\draw    (230.7,64.24) -- (268.39,64.24) ;
			%Straight Lines [id:da7077883673296559] 
			\draw    (43.21,119.5) -- (80.9,119.5) ;
			%Straight Lines [id:da26660555523716023] 
			\draw    (80.9,119.5) -- (118.59,119.5) ;
			%Straight Lines [id:da3986990947534579] 
			\draw    (267.08,119.54) -- (304.77,119.54) ;
			%Straight Lines [id:da3207908416852778] 
			\draw    (230.2,119.54) -- (267.89,119.54) ;
			%Straight Lines [id:da054610758907344126] 
			\draw    (118.1,173.46) -- (155.78,173.46) ;
			%Straight Lines [id:da6790493420879957] 
			\draw    (192.7,174.3) -- (230.39,174.3) ;
			%Straight Lines [id:da48615453737994185] 
			\draw  [dash pattern={on 0.84pt off 2.51pt}]  (155.29,63.04) -- (192.95,63.44) ;
			%Straight Lines [id:da8166213771193707] 
			\draw  [dash pattern={on 0.84pt off 2.51pt}]  (118.59,118.7) -- (156.25,119.1) ;
			%Straight Lines [id:da24925085304087413] 
			\draw  [dash pattern={on 0.84pt off 2.51pt}]  (155.19,118.7) -- (192.85,119.1) ;
			%Straight Lines [id:da8014187269250024] 
			\draw  [dash pattern={on 0.84pt off 2.51pt}]  (192.14,118.7) -- (229.8,119.1) ;
			%Straight Lines [id:da9833347638271748] 
			\draw  [dash pattern={on 0.84pt off 2.51pt}]  (156.19,173.89) -- (193.85,174.3) ;
			%Straight Lines [id:da24184061692902814] 
			\draw    (267.98,63.81) -- (304.43,120.77) ;
			%Straight Lines [id:da07356387073935222] 
			\draw    (304.43,119.9) -- (230.39,174.3) ;
			%Straight Lines [id:da5660388963161784] 
			\draw    (43.21,119.5) -- (118.44,173.89) ;
			%Straight Lines [id:da4760388572663987] 
			\draw    (80.25,63.84) -- (43.21,120.37) ;
			%Shape: Ellipse [id:dp504884973862533] 
			\draw  [fill={rgb, 255:red, 0; green, 0; blue, 0 }  ,fill opacity=1 ][line width=6]  (660.16,120.16) .. controls (660.16,120.4) and (659.98,120.6) .. (659.76,120.6) .. controls (659.54,120.6) and (659.36,120.4) .. (659.36,120.16) .. controls (659.36,119.93) and (659.54,119.73) .. (659.76,119.73) .. controls (659.98,119.73) and (660.16,119.93) .. (660.16,120.16) -- cycle ;
			%Shape: Ellipse [id:dp1370482631960006] 
			\draw  [fill={rgb, 255:red, 0; green, 0; blue, 0 }  ,fill opacity=1 ][line width=6]  (623.22,120.16) .. controls (623.22,120.4) and (623.04,120.6) .. (622.82,120.6) .. controls (622.6,120.6) and (622.42,120.4) .. (622.42,120.16) .. controls (622.42,119.93) and (622.6,119.73) .. (622.82,119.73) .. controls (623.04,119.73) and (623.22,119.93) .. (623.22,120.16) -- cycle ;
			%Shape: Ellipse [id:dp03188403439048937] 
			\draw  [fill={rgb, 255:red, 0; green, 0; blue, 0 }  ,fill opacity=1 ][line width=6]  (585.53,120.16) .. controls (585.53,120.4) and (585.35,120.6) .. (585.13,120.6) .. controls (584.91,120.6) and (584.73,120.4) .. (584.73,120.16) .. controls (584.73,119.93) and (584.91,119.73) .. (585.13,119.73) .. controls (585.35,119.73) and (585.53,119.93) .. (585.53,120.16) -- cycle ;
			%Shape: Ellipse [id:dp10893955122349275] 
			\draw  [fill={rgb, 255:red, 0; green, 0; blue, 0 }  ,fill opacity=1 ][line width=6]  (548.59,120.16) .. controls (548.59,120.4) and (548.41,120.6) .. (548.19,120.6) .. controls (547.96,120.6) and (547.78,120.4) .. (547.78,120.16) .. controls (547.78,119.93) and (547.96,119.73) .. (548.19,119.73) .. controls (548.41,119.73) and (548.59,119.93) .. (548.59,120.16) -- cycle ;
			%Shape: Ellipse [id:dp6830058861974595] 
			\draw  [fill={rgb, 255:red, 0; green, 0; blue, 0 }  ,fill opacity=1 ][line width=6]  (510.53,119.76) .. controls (510.53,120) and (510.35,120.2) .. (510.12,120.2) .. controls (509.9,120.2) and (509.72,120) .. (509.72,119.76) .. controls (509.72,119.53) and (509.9,119.33) .. (510.12,119.33) .. controls (510.35,119.33) and (510.53,119.53) .. (510.53,119.76) -- cycle ;
			%Shape: Ellipse [id:dp5548857808434604] 
			\draw  [fill={rgb, 255:red, 0; green, 0; blue, 0 }  ,fill opacity=1 ][line width=6]  (473.58,120.57) .. controls (473.58,120.8) and (473.4,121) .. (473.18,121) .. controls (472.96,121) and (472.78,120.8) .. (472.78,120.57) .. controls (472.78,120.33) and (472.96,120.13) .. (473.18,120.13) .. controls (473.4,120.13) and (473.58,120.33) .. (473.58,120.57) -- cycle ;
			%Shape: Ellipse [id:dp40935996552848786] 
			\draw  [fill={rgb, 255:red, 0; green, 0; blue, 0 }  ,fill opacity=1 ][line width=6]  (435.89,120.57) .. controls (435.89,120.8) and (435.71,121) .. (435.49,121) .. controls (435.27,121) and (435.09,120.8) .. (435.09,120.57) .. controls (435.09,120.33) and (435.27,120.13) .. (435.49,120.13) .. controls (435.71,120.13) and (435.89,120.33) .. (435.89,120.57) -- cycle ;
			%Shape: Ellipse [id:dp11073720204530346] 
			\draw  [fill={rgb, 255:red, 0; green, 0; blue, 0 }  ,fill opacity=1 ][line width=6]  (398.95,119.76) .. controls (398.95,120) and (398.77,120.2) .. (398.55,120.2) .. controls (398.32,120.2) and (398.14,120) .. (398.14,119.76) .. controls (398.14,119.53) and (398.32,119.33) .. (398.55,119.33) .. controls (398.77,119.33) and (398.95,119.53) .. (398.95,119.76) -- cycle ;
			%Shape: Ellipse [id:dp06497304336652032] 
			\draw  [fill={rgb, 255:red, 0; green, 0; blue, 0 }  ,fill opacity=1 ][line width=6]  (622.72,176.26) .. controls (622.72,176.5) and (622.54,176.69) .. (622.32,176.69) .. controls (622.1,176.69) and (621.92,176.5) .. (621.92,176.26) .. controls (621.92,176.02) and (622.1,175.83) .. (622.32,175.83) .. controls (622.54,175.83) and (622.72,176.02) .. (622.72,176.26) -- cycle ;
			%Shape: Ellipse [id:dp66696782702131] 
			\draw  [fill={rgb, 255:red, 0; green, 0; blue, 0 }  ,fill opacity=1 ][line width=6]  (585.03,176.26) .. controls (585.03,176.5) and (584.85,176.69) .. (584.63,176.69) .. controls (584.41,176.69) and (584.23,176.5) .. (584.23,176.26) .. controls (584.23,176.02) and (584.41,175.83) .. (584.63,175.83) .. controls (584.85,175.83) and (585.03,176.02) .. (585.03,176.26) -- cycle ;
			%Shape: Ellipse [id:dp38031814771363104] 
			\draw  [fill={rgb, 255:red, 0; green, 0; blue, 0 }  ,fill opacity=1 ][line width=6]  (548.09,176.26) .. controls (548.09,176.5) and (547.91,176.69) .. (547.69,176.69) .. controls (547.47,176.69) and (547.29,176.5) .. (547.29,176.26) .. controls (547.29,176.02) and (547.47,175.83) .. (547.69,175.83) .. controls (547.91,175.83) and (548.09,176.02) .. (548.09,176.26) -- cycle ;
			%Shape: Ellipse [id:dp8343167446861142] 
			\draw  [fill={rgb, 255:red, 0; green, 0; blue, 0 }  ,fill opacity=1 ][line width=6]  (510.03,175.86) .. controls (510.03,176.1) and (509.85,176.29) .. (509.63,176.29) .. controls (509.4,176.29) and (509.22,176.1) .. (509.22,175.86) .. controls (509.22,175.62) and (509.4,175.43) .. (509.63,175.43) .. controls (509.85,175.43) and (510.03,175.62) .. (510.03,175.86) -- cycle ;
			%Shape: Ellipse [id:dp7123484623542555] 
			\draw  [fill={rgb, 255:red, 0; green, 0; blue, 0 }  ,fill opacity=1 ][line width=6]  (473.08,175.86) .. controls (473.08,176.1) and (472.9,176.29) .. (472.68,176.29) .. controls (472.46,176.29) and (472.28,176.1) .. (472.28,175.86) .. controls (472.28,175.62) and (472.46,175.43) .. (472.68,175.43) .. controls (472.9,175.43) and (473.08,175.62) .. (473.08,175.86) -- cycle ;
			%Shape: Ellipse [id:dp4415789929968512] 
			\draw  [fill={rgb, 255:red, 0; green, 0; blue, 0 }  ,fill opacity=1 ][line width=6]  (435.4,175.86) .. controls (435.4,176.1) and (435.22,176.29) .. (434.99,176.29) .. controls (434.77,176.29) and (434.59,176.1) .. (434.59,175.86) .. controls (434.59,175.62) and (434.77,175.43) .. (434.99,175.43) .. controls (435.22,175.43) and (435.4,175.62) .. (435.4,175.86) -- cycle ;
			%Shape: Ellipse [id:dp7591815018366901] 
			\draw  [fill={rgb, 255:red, 0; green, 0; blue, 0 }  ,fill opacity=1 ][line width=6]  (584.54,66.21) .. controls (584.54,66.45) and (584.36,66.64) .. (584.13,66.64) .. controls (583.91,66.64) and (583.73,66.45) .. (583.73,66.21) .. controls (583.73,65.97) and (583.91,65.77) .. (584.13,65.77) .. controls (584.36,65.77) and (584.54,65.97) .. (584.54,66.21) -- cycle ;
			%Shape: Ellipse [id:dp023556436101847877] 
			\draw  [fill={rgb, 255:red, 0; green, 0; blue, 0 }  ,fill opacity=1 ][line width=6]  (547.59,66.21) .. controls (547.59,66.45) and (547.41,66.64) .. (547.19,66.64) .. controls (546.97,66.64) and (546.79,66.45) .. (546.79,66.21) .. controls (546.79,65.97) and (546.97,65.77) .. (547.19,65.77) .. controls (547.41,65.77) and (547.59,65.97) .. (547.59,66.21) -- cycle ;
			%Shape: Ellipse [id:dp6990920857818113] 
			\draw  [fill={rgb, 255:red, 0; green, 0; blue, 0 }  ,fill opacity=1 ][line width=6]  (509.53,65.81) .. controls (509.53,66.04) and (509.35,66.24) .. (509.13,66.24) .. controls (508.91,66.24) and (508.73,66.04) .. (508.73,65.81) .. controls (508.73,65.57) and (508.91,65.37) .. (509.13,65.37) .. controls (509.35,65.37) and (509.53,65.57) .. (509.53,65.81) -- cycle ;
			%Shape: Ellipse [id:dp6521259630128067] 
			\draw  [fill={rgb, 255:red, 0; green, 0; blue, 0 }  ,fill opacity=1 ][line width=6]  (472.59,65.81) .. controls (472.59,66.04) and (472.41,66.24) .. (472.18,66.24) .. controls (471.96,66.24) and (471.78,66.04) .. (471.78,65.81) .. controls (471.78,65.57) and (471.96,65.37) .. (472.18,65.37) .. controls (472.41,65.37) and (472.59,65.57) .. (472.59,65.81) -- cycle ;
			%Straight Lines [id:da5625032801294048] 
			\draw    (622.72,176.26) -- (585.03,176.26) ;
			%Straight Lines [id:da4092579326619925] 
			\draw    (510.77,175.86) -- (473.08,175.86) ;
			%Straight Lines [id:da10498426954532669] 
			\draw    (584.63,175.83) -- (548.09,176.26) ;
			%Straight Lines [id:da3573203279456594] 
			\draw    (472.28,175.86) -- (434.59,175.86) ;
			%Straight Lines [id:da9064343765300158] 
			\draw    (659.76,120.6) -- (622.07,120.6) ;
			%Straight Lines [id:da5825668751896187] 
			\draw    (622.07,120.6) -- (584.38,120.6) ;
			%Straight Lines [id:da7991126411317622] 
			\draw    (435.89,120.57) -- (398.2,120.57) ;
			%Straight Lines [id:da41836049531457054] 
			\draw    (472.78,120.57) -- (435.09,120.57) ;
			%Straight Lines [id:da5736037741242561] 
			\draw    (584.88,66.64) -- (547.19,66.64) ;
			%Straight Lines [id:da741183312814103] 
			\draw    (510.28,65.81) -- (472.59,65.81) ;
			%Straight Lines [id:da8717532084604229] 
			\draw  [dash pattern={on 0.84pt off 2.51pt}]  (547.69,177.06) -- (510.03,176.66) ;
			%Straight Lines [id:da8690474913106769] 
			\draw  [dash pattern={on 0.84pt off 2.51pt}]  (584.38,121.4) -- (546.72,121) ;
			%Straight Lines [id:da3540629576619714] 
			\draw  [dash pattern={on 0.84pt off 2.51pt}]  (547.78,121.4) -- (510.12,121) ;
			%Straight Lines [id:da9329214589843771] 
			\draw  [dash pattern={on 0.84pt off 2.51pt}]  (510.84,121.4) -- (473.18,121) ;
			%Straight Lines [id:da4298834626036112] 
			\draw  [dash pattern={on 0.84pt off 2.51pt}]  (546.79,66.21) -- (509.13,65.81) ;
			%Straight Lines [id:da6884433258723379] 
			\draw    (434.99,176.29) -- (398.55,119.33) ;
			%Straight Lines [id:da9097861306924282] 
			\draw    (398.55,120.2) -- (472.59,65.81) ;
			%Straight Lines [id:da003936809415342468] 
			\draw    (659.76,120.6) -- (584.54,66.21) ;
			%Straight Lines [id:da31982931123206515] 
			\draw    (622.72,176.26) -- (659.76,119.73) ;
			
			% Text Node
			\draw (138.18,215) node [anchor=north west][inner sep=0.75pt]    {$r< p< q$};
			% Text Node
			\draw (493.52,215) node [anchor=north west][inner sep=0.75pt]    {$p< r< q$};
			% Text Node
			\draw (41.33,37.07) node [anchor=north west][inner sep=0.75pt]    {$\P_{1}: \text{ Path of length } p$};
			% Text Node
			\draw (74.67,100.73) node [anchor=north west][inner sep=0.75pt]    {$\P_{2}: \text{ Path of length } q$};
			% Text Node
			\draw (41.33,187.73) node [anchor=north west][inner sep=0.75pt]    {$\P_{3}: \text{ Path of length } r$};
			% Text Node
			\draw (394.33,37.07) node [anchor=north west][inner sep=0.75pt]    {$\P_{1}: \text{ Path of length } p$};
			% Text Node
			\draw (440.33,100.73) node [anchor=north west][inner sep=0.75pt]    {$\P_{2}: \text{ Path of length } q$};
			% Text Node
			\draw (394,187.07) node [anchor=north west][inner sep=0.75pt]    {$\P_{3}: \text{ Path of length } r$};

		\end{tikzpicture}
		
	}
	\caption{Isomorphic Bicyclic Graphs of Type-III for fixed $p,r$}
\end{figure}

Depending on the values of $p,q$ and $r$, there are a lot of different possibilities for the structure of a bicyclic type-III graph. 

For our first result, we consider a special case of the bicyclic graphs of type-III, namely, when one of the paths does not contain any vertices. The graphs in this family have an edge, where the starting and ending vertices of $\P_2$ are connected together.

Using the notation defined above, we see that $q$ must be greater than 1. Similarly, $p$, $r$ can not both be zero at the same time, since this gives us a multigraph. Using these facts, we state and prove the following.

\begin{Theorem}
	Let $\G \simeq \C_{p,q,r}$ where $r =0$ (equivalently, $p=0$), then $\beta(\G)=2$.
\end{Theorem}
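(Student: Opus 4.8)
The plan is to put $\G$ into a concrete form and then give matching lower and upper bounds for $\beta(\G)$. Since $r=0$ the path $\P_3$ carries no vertices, and by the identification $\C_{p,q,r}\simeq\Theta_{p+1,q-1,r+1}$ recorded above we get $\G\simeq\Theta_{p+1,q-1,1}$: writing $u:=v_{p+1}$ and $v:=v_{p+q}$ for the two vertices of degree $3$, the graph $\G$ consists of the edge $uv$ together with two internally disjoint $u$--$v$ paths, $A$ of length $p+1$ through $v_1,\dots,v_p$ and $B$ of length $q-1$ through $v_{p+2},\dots,v_{p+q-1}$; that is, $\G$ is a cycle with one chord. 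The non-degeneracy assumptions give $p\ge 1$ (as $p,r$ are not both $0$) and $q\ge 2$; if $q=2$ then $B$ collapses onto the edge $uv$, so $\G\simeq\C_{p+2}$ and $\beta(\G)=2$ by Theorem~\ref{basic}(c). Hence assume $q\ge 3$ henceforth.

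For the lower bound, $\G$ contains a cycle (for instance $A$ closed up by the edge $uv$), so $\G$ is not a path; Theorem~\ref{basic}(a) then gives $\beta(\G)\ne 1$, and since $\G$ has $p+q\ge 4$ vertices we conclude $\beta(\G)\ge 2$. It remains to exhibit a resolving set of size $2$. The organizing observation is that both ``$A$ together with the edge $uv$'' and ``$B$ together with the edge $uv$'' are \emph{isometric} subcycles of $\G$ --- a detour through the other path is never shorter than the chord $uv$ --- and the two ends of an edge always resolve a cycle; hence $\{u,v\}$ already separates any two vertices lying on a common one of these cycles, and the only collisions a candidate pair of landmarks must be checked against are those between an interior vertex of $A$ and an interior vertex of $B$.

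Concretely I would take the landmarks to be the two neighbours of $u$ distinct from $v$, namely $\mathcal{R}=\{v_1,\,v_{p+2}\}$, and verify that the vectors $\bigl(d(x,v_1),\,d(x,v_{p+2})\bigr)$, $x\in\V(\G)$, are pairwise distinct. Each distance here is a minimum over a bounded number of routes --- remain on the path carrying $x$, or detour via $u$ or via $v$, possibly using the chord $uv$ --- so the vector of $x$ is an explicit piecewise-linear expression in $p$, $q$ and the position of $x$ along its path, and distinctness reduces to a finite inspection of those formulas. This pair does the job except when both $p+1$ and $q-1$ are large, where the two vertices adjacent to $v$ acquire the same vector; there one instead uses a suitable landmark pair of bounded-size support and rechecks in the same way, and the lone degenerate graph $p=1,\ q=3$, where $\G$ is $\mathcal{K}_4$ with an edge removed, is settled directly by Theorem~\ref{basic}(d). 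Combining the two bounds gives $\beta(\G)=2$. (We note in passing that $\Theta_{p+1,q-1,1}$ has a path of length $1$, so it is never of the forms $\Theta_{p,p,p}$ or $\Theta_{p,p,p+2}$; in particular this case is not one in which the metric dimension rises to $3$.)

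The main obstacle is this case bookkeeping rather than any single hard step: the two landmarks must be chosen so that no non-trivial automorphism of $\G$ fixes both (necessary) and, beyond that, so that no interior vertex of $A$ and interior vertex of $B$ come out equidistant from each landmark --- the awkward configurations being exactly the symmetric/balanced ones with both paths long, where the neighbours of $v$ are hard to tell apart from landmarks sitting near $u$. Pinning down precisely which ranges of $p$ and $q$ force the substitution, supplying the substitute pair, and then verifying pairwise distinctness of the finitely many distance-vector patterns on each side of that threshold is where the care is needed.
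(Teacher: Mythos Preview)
Your lower bound via Theorem~\ref{basic}(a) is fine, and the structural picture of $\G$ as a cycle with a chord is correct. The upper bound, however, is not actually established. You propose the pair $\{v_1,v_{p+2}\}$, then immediately concede that it fails once ``both $p+1$ and $q-1$ are large'' (indeed for $p\ge 4$, $q\ge 6$ the two neighbours $v_p$ and $v_{p+q-1}$ of $v$ both receive the vector $(3,3)$), and your remedy is only the phrase ``a suitable landmark pair of bounded-size support.'' That is precisely the content of the theorem you are trying to prove: which pair, and why does it work? Leaving this unspecified is a genuine gap, not bookkeeping. The isometric-subcycle paragraph does not help here either: you argue that $\{u,v\}$ separates any two vertices on a common subcycle, but your chosen landmarks are \emph{not} $\{u,v\}$, so nothing about $\{u,v\}$ transfers to your candidate pair, and $\{u,v\}$ itself is not resolving (e.g.\ $v_1$ and $v_{p+2}$ are indistinguishable from $\{u,v\}$ whenever $p\ge 2$ and $q\ge 4$).

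The paper avoids all of this by choosing both landmarks on the \emph{same} outer path: for $p>1$ it takes $\W=\{v_1,\,v_{\lfloor p/2\rfloor+1}\}$, i.e.\ the first vertex of $\P_1$ and (roughly) its midpoint, and for $p=1$ it takes $\{v_1,v_2\}$. With these choices the distance formulas are piecewise linear with break-points governed by $\lfloor p/2\rfloor$ and $\lceil q/2\rceil$, and a single partition of $\V(\G)$ into five or six blocks suffices to verify pairwise distinctness without any further case split on the sizes of $p$ and $q$. If you want to salvage your approach, the missing idea is exactly this: put the second landmark deep inside one path rather than symmetrically next to $u$, so that the chord does not simultaneously shortcut both coordinates.
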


\begin{proof}
	Without loss of generality we assume that $r=0$, since, considering $p=0$ gives an isomorphic graph. We consider the following two cases.
	
	\begin{proofpart}
		\mybox{When $p=1$. }
		
		Let the partitions $\V_l$ of $\V(\C_{p,q,r})$ and their representations from $\W=\left\{v_1, v_2\right\}$ be as given in the following:
		
		\resizebox{11cm}{!}{%
			\noindent\begin{minipage}{.6\linewidth}
				\begin{equation*}
					\V_l=\begin{cases}
						\left\{v_{p}\right\}, & \hspace*{0.1cm} \text{for } l=1 \vspace{0.1cm}\\ 
						\left\{ v_{p+1}\right\}, & \hspace*{0.1cm} \text{for } l=2 \vspace{0.1cm}\\
						\left\{v_{p+2}, \cdots, v_{p+\left\lceil\frac{q}{2}\right\rceil}\right\}, & \hspace*{0.1cm} \text{for } l=3 \vspace{0.1cm}\\
						\left\{v_{p+\left\lceil\frac{q}{2}\right\rceil+1}\right\}, & \hspace*{0.1cm} \text{for } l=4 \vspace{0.1cm}\\
						\left\{ v_{p+\left\lceil\frac{q}{2}\right\rceil+2,\cdots,v_{p+q}}\right\}, & \hspace*{0.1cm} \text{for } l=5 \vspace{0.1cm}.\\
						
					\end{cases}
				\end{equation*}
			\end{minipage}%
			\begin{minipage}{.6\linewidth}
				\begin{equation*}
					\RR(v_{\A} |\W)=\begin{cases}
						\left(1-{\A}, 2-{\A}\right), & \hspace*{0.1cm} \text{for } {\A}\in \V_1 \vspace{0.1cm}\\					
						\left({\A}-1, 2-{\A}\right), & \hspace*{0.1cm} \text{for } {\A}\in \V_2 \vspace{0.1cm}\\					
						\left({\A}-1, {\A}-2\right), & \hspace*{0.1cm} \text{for } {\A}\in \V_3 \vspace{0.1cm}\\					
						\left(p+q+1-{\A},{\A}-2\right), & \hspace*{0.1cm} \text{for } {\A}\in \V_4 \vspace{0.1cm}\\					
						\left(p+q+1-{\A}, p+q+1-{\A}\right), & \hspace*{0.1cm} \text{for } {\A}\in \V_5 \vspace{0.1cm}.\\
					\end{cases}
				\end{equation*}	
			\end{minipage}
		}
		
		\vspace{0.2cm}
		It can easily be verified that $\RR(v_{\A} |\W)$, is different for all vertices of $\C_{p,q,r}$. Hence, $\W$ is indeed a resolving set. 
	\end{proofpart}
	
	\begin{proofpart}
		\mybox{When $p>1$. }
		
		Let us partition $\V(\C_{p,q,r})$ into $\V_l$ along with their representations from $\W=\left\{v_1, v_{\left\lfloor\frac{p}{2}\right\rfloor+1}\right\}$ as follows:
		
		\resizebox{10.5cm}{!}{%
			\noindent\begin{minipage}{.6\linewidth}
				\begin{equation*}
					\V_l=\begin{cases}
						\left\{v_1,v_2,\cdots,v_{\left\lfloor\frac{p}{2}\right\rfloor+1}\right\}, & \hspace*{0.1cm} \text{for } l=1 \vspace{0.1cm}\\ 
						\left\{ v_{\left\lfloor\frac{p}{2}\right\rfloor+2}\right\}, & \hspace*{0.1cm} \text{for } l=2 \vspace{0.1cm}\\
						\left\{v_{\left\lfloor\frac{p}{2}\right\rfloor+3}, \cdots, v_{p}\right\}, & \hspace*{0.1cm} \text{for } l=3 \vspace{0.1cm}\\
						\left\{v_{p+1},\cdots, v_{p+\lfloor\frac{q}{2}\rfloor}\right\}, & \hspace*{0.1cm} \text{for } l=4 \vspace{0.1cm}\\
						\left\{ v_{p+\left\lceil\frac{q}{2}\right\rceil}\right\}, & \hspace*{0.1cm} \text{for } l=5 \vspace{0.1cm}\\
						\left\{v_{p+\left\lceil\frac{q}{2}\right\rceil+1},\cdots,v_{p+q}\right\}, & \hspace*{0.1cm} \text{for } l=6. \\					
					\end{cases}
				\end{equation*}
			\end{minipage}%
			\begin{minipage}{.6\linewidth}
				\begin{equation*}
					\RR(v_{\A}|\W)=\begin{cases}
						\left({\A}-1, \lfloor\frac{p}{2}\rfloor+1-{\A}\right), & \hspace*{0.1cm} \text{for } {\A}\in \V_1 \vspace{0.1cm}\\					
						\left({\A}-1, {\A}-\lfloor\frac{p}{2}\rfloor-1\right), & \hspace*{0.1cm} \text{for } {\A}\in \V_2 \vspace{0.1cm}\\					
						\left(p+3-{\A},{\A}-\lfloor\frac{p}{2}\rfloor-1\right), & \hspace*{0.1cm} \text{for } {\A}\in \V_3 \vspace{0.1cm}\\					
						\left({\A}-p,{\A}+\lfloor\frac{p}{2}\rfloor-p\right), & \hspace*{0.1cm} \text{for } {\A}\in \V_4 \vspace{0.1cm}\\
						\left({\A}-p, 2p+q-{\A}-\lfloor\frac{p}{2}\rfloor\right), & \hspace*{0.1cm} \text{for } {\A}\in \V_5 \vspace{0.1cm}\\					
						\left(p+q+2-{\A}, 2p+q-{\A}-\lfloor\frac{p}{2}\rfloor\right), & \hspace*{0.1cm} \text{for } {\A}\in \V_6 \vspace{0.1cm}.\\
					\end{cases}
				\end{equation*}	
			\end{minipage}
		}
		
		\vspace{0.2cm}
		It can be easily concluded that $\RR(v_{\XX}|\W) \neq \RR(v_{\YY}|\W)$ for $v_{\XX}, v_{\YY} \in \V_l$, and hence, $\W$ is a resolving set.
	\end{proofpart}
	These parts together with Theorem \ref{basic}-(c) provide the required proof.\qedhere
	
\end{proof}

For all other values of $p,q$ and $r$, we have the following results.

\begin{Theorem}\label{Theorem 1}
	Let $\G=\C_{p,q,r}$ where $q>p>r$ (Equivalently, $q>r>p$), then $\beta(\C_{p,q,r})=2$.
	%If $ l >k>m$  or isomorphic replacing $k=m$ then $\beta(P_{k}, P_{l}, P_{m})=2.$ where $k+l+m=n$
\end{Theorem}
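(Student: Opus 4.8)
The plan is to prove the two inequalities $\beta(\C_{p,q,r}) \geq 2$ and $\beta(\C_{p,q,r}) \leq 2$ separately. The lower bound is immediate: $\C_{p,q,r}$ contains a cycle, hence is not a path, so Theorem~\ref{basic}(a) gives $\beta(\C_{p,q,r}) \geq 2$. All the work is in producing a resolving set of size $2$. Using $\C_{p,q,r} \simeq \C_{r,q,p}$ we may assume $q > p > r$, and since the case $r=0$ is the previous theorem we also take $r \geq 1$, so that $q \geq 3$, $p \geq 2$, $r \geq 1$.

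A first observation guides the choice of landmarks. If $x$ is a degree-$3$ vertex of $\C_{p,q,r}$, its two neighbours lying off $\P_1$ (one on $\P_2$, one on $\P_3$) are at equal distance from every vertex of $\P_1$ whose geodesic to them passes through $x$; consequently no two vertices both lying on $\P_1$ can form a resolving set, so at least one landmark must sit off $\P_1$. I would therefore take $\W = \{v_1,\, v_t\}$ with $v_1$ the neighbour of the branch vertex $v_{p+1}$ on $\P_1$, and $v_t$ a vertex in the interior of the long path $\P_2$ — concretely $v_t = v_{p+2}$, or $v_t$ nearer the middle of $\P_2$ (index roughly $p+\lceil q/2\rceil$), with the exact index shifted by $\pm 1$ according to the parities of $p$ and $q$. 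Putting the second landmark inside $\P_2$ is also what will break the reflection symmetries arising in the degenerate subcases below.

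The verification then follows the template of the previous proof. One partitions $\V(\C_{p,q,r})$ into a bounded list of blocks $\V_1, \V_2, \dots$ determined by (i) which of $\P_1, \P_2, \P_3$ contains the vertex and (ii) whether the geodesic from that vertex to each of $v_1, v_t$ runs directly along a path or wraps around a cycle — the comparisons $q > p > r$ being exactly what resolves the $\min$'s in the distance formulas. On each block $\RR(v_\A \mid \W)$ is an explicit affine function of the index $\A$, of one of the shapes $(\A - c_1,\, c_2 - \A)$, $(c_3 - \A,\, \A - c_4)$, $(\A - c_5,\, \A - c_6)$, and so on. Distinctness within a block is clear since each coordinate is strictly monotone in $\A$; distinctness across two blocks is read off either from the sign pattern of the pair (one coordinate rising while the other falls versus both rising), or from disjointness of the coordinate ranges, or, for the few residual coincidences, from a parity argument on $\A$. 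This bookkeeping is the laborious but mechanical part.

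The main obstacle I anticipate is the case analysis forced by the loose ordering $q > p > r$ together with a few tight subcases. Because $q > p$ does not pin down $q$ relative to $2p$ or $2r$, the routing of geodesics to the short outer path $\P_3$ (vertices $v_{p+q+1}, \dots, v_{p+q+r}$) changes with the relative magnitudes, forcing splits on, say, the parities of $p, q$ and on whether $q \leq 2r$. More delicate are the subcases where two of the three cycle lengths coincide: the family $q = p+2$, where the two longer of the three paths are equal, and the configuration $p = r+1,\ q = r+2$, where the two shorter paths are equal. In each of these there is a reflection automorphism exchanging two equal-length paths, so the landmark set must be chosen to destroy it; checking that the proposed $\W$ (with its second landmark in $\P_2$, away from the axis of any such reflection) indeed does so, and that the resulting representations stay pairwise distinct in these tightest configurations, is where the argument needs the most care.
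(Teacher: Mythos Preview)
Your overall strategy---lower bound from Theorem~\ref{basic}(a), upper bound by exhibiting a two-element $\W$, then a block partition with affine distance formulas---is exactly what the paper does. The gap is in the choice of $\W$.

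Your ``first observation'' does not yield the stated consequence. You note correctly that the two off-$\P_1$ neighbours of the branch vertex $v_{p+1}$ are equidistant from any $v_i\in\P_1$ whose geodesics to them both pass through $v_{p+1}$; but for $v_i$ near the \emph{other} end of $\P_1$ the geodesic to the $\P_3$-neighbour wraps the short cycle through $v_{p+q}$ instead (precisely because $r<p$), so that vertex \emph{does} separate the pair. Two landmarks on $\P_1$ are therefore not excluded, and in fact the paper takes $\W=\{v_1,\,v_{\lfloor (p+r)/2\rfloor+1}\}\subset\P_1$ for the main range $q-r>2$, reserving $\{v_1,v_{p+2}\}$ only for the tight case $q-r=2$.

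More seriously, your proposed $\W$ fails on small instances. Take $p=2$, $q=4$, $r=1$ (so $q-r=3$). With $\W=\{v_1,v_{p+2}\}=\{v_1,v_4\}$ one gets $\RR(v_6\mid\W)=\RR(v_7\mid\W)=(2,2)$; with the ``middle of $\P_2$'' choice $\W=\{v_1,v_5\}$ one gets $\RR(v_2\mid\W)=\RR(v_3\mid\W)=(1,2)$. No interior vertex of $\P_2$ paired with $v_1$ resolves this graph, whereas the paper's $\{v_1,v_2\}\subset\P_1$ does. So the heuristic ``put the second landmark inside $\P_2$'' is pointing you in the wrong direction; the case split you need is on the size of $q-r$ (with a further split on $p-r$), not on parities, and the second landmark should sit roughly at the midpoint of $\P_1$ in the generic case.
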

\begin{proof}
	We only consider the case when $q>p>r$, since, by isomorphism, the same result is applicable to $q>r>p$. Let us consider the set $\W$ as given in the following:
	\begin{equation*}
		\W=\begin{cases}
			\left\{v_1,v_{p+2}\right\}, & \hspace*{0.1cm} \text{for } q-r=2 \vspace{0.1cm}\\ 
			\left\{v_1, v_{\left\lfloor\frac{p+r}{2}\right\rfloor+1}\right\}, & \hspace*{0.1cm} \text{for } q-r> 2 \vspace{0.1cm}.\\
		\end{cases}
	\end{equation*}
	
	We will now show that $\W$ as given above, is a metric basis for $\C_{p,q,r}(r<p<q)$. 
	%	\noindent\rule[0.5ex]{\linewidth}{1pt}
	\begin{proofpart}\label{part I}
		\mybox{When $q-r=2$.}
		
		It is obvious that, $q-p=p-r=1$. Let, $\V_l$ and their representation from $\W= \left\{v_1,v_{p+2}\right\}$ be:
		
		\resizebox{10cm}{!}{%
			\noindent\begin{minipage}{.55\linewidth}
				\begin{equation*}
					\V_l=\begin{cases}
						\left\{v_1,v_2,\cdots,v_{p-1}\right\}, & \hspace*{0.1cm} \text{for } l=1 \vspace{0.1cm}\\ 
						\left\{ v_{p}\right\}, & \hspace*{0.1cm} \text{for } l=2 \vspace{0.1cm}\\
						\left\{v_{ p+1}, v_{p+2}\right\}, & \hspace*{0.1cm} \text{for } l=3 \vspace{0.1cm}\\
						\left\{v_{p+3},\cdots,v_{p+q-1}\right\}, & \hspace*{0.1cm} \text{for } l=4 \vspace{0.1cm}\\
						\left\{v_{ p+q}\right\}, & \hspace*{0.1cm} \text{for } l=5 \vspace{0.1cm}\\
						\left\{v_{p+q+1},\cdots,v_{p+q+r}\right\}, & \hspace*{0.1cm} \text{for } l=6. \\
					\end{cases}
				\end{equation*}
			\end{minipage}%
			\begin{minipage}{.5\linewidth}
				\begin{equation*}
					\RR(v_{\A} |\R)=\begin{cases}
						\left({\A}-1, {\A}+1\right), & \hspace*{0.1cm} \text{for } {\A}\in \V_1 \vspace{0.1cm}\\
						
						\left({\A}-1, p+q-({\A}+1)\right), & \hspace*{0.1cm} \text{for } {\A}\in \V_2 \vspace{0.1cm}\\
						
						\left({\A}-p,p+2-{\A}\right), & \hspace*{0.1cm} \text{for } {\A}\in \V_3 \vspace{0.1cm}\\
						
						\left({\A}-p,{\A}-(p+2)\right), & \hspace*{0.1cm} \text{for } {\A}\in \V_4 \vspace{0.1cm}\\
						
						\left(2p+q-{\A},{\A}-(p+2)\right), & \hspace*{0.1cm} \text{for } {\A}\in \V_5 \vspace{0.1cm}\\
						
						\left({\A}+1-(p+q), {\A}+1-(p+q)\right), & \hspace*{0.1cm} \text{for } {\A}\in \V_6 \vspace{0.1cm}.\\
					\end{cases}
				\end{equation*}
			\end{minipage}
		}
		
		\vspace{0.2cm}
		Then these $\V_l$ form a partition for $\C_{p,q,r}$. It is also a straightforward task to prove that distinct vertices of $\C_{p,q,r}$ have distinct representation with respect to the set $\W=\left\{v_1,v_{p+2}\right\}$, by considering all possible cases of occurrence of $v_{\XX}$ and $v_{\YY}$, $\XX \neq \YY$, from the partitions $\V_l$ defined above. For the sake of completeness, we provide the proof here but will omit it later on in other theorems.
		
		It is simple to prove that when $v_{\XX},v_{\YY}, \XX \neq \YY$ are from the same partition $\V_l$, then
		\begin{equation*}
			\RR(v_{\XX}|\W)\neq \RR(v_{\YY}|\W).
		\end{equation*}
		
		Again, for completeness, we solve the case when $v_{\XX} \neq v_{\YY}$ and both are from $\V_6$. Assuming on the contrary that $\RR(v_{\XX}|\W)= \RR(v_{\YY}|\W)$, we get 
		\begin{equation*}
			\left({\XX}+1-(p+q), {\XX}+1-(p+q)\right)=\left({\YY}+1-(p+q), {\YY}+1-(p+q)\right).
		\end{equation*}
		
		This in turn gives us $\XX=\YY$, a contradiction.
		
		For the remaining cases, we proceed as follows.
		\begin{mycases}
			\begin{subcases}

				\case {When $v_{\XX} \in \V_1 $, $v_{\YY} \in \V_2 $.}
				We assume that $\RR(v_{\XX}|\W)\neq \RR(v_{\YY}|\W)$. If this is not the case then
				$\RR(v_{\XX}|\W)=\RR(v_{\YY}|\W)$ gives us $({\XX}-1, {\XX}+1)=({\YY}-1, p+q-({\YY}+1))$.
				Equating the corresponding terms and solving, we get ${\XX}={\YY}$ and ${\XX}+{\YY}=p+q-2$.
				Solving these for ${\XX}$ gives, ${\XX}=\frac{p+q}{2}-1$, a contradiction to the fact that ${\XX}$ is an integer. Hence $v_{\XX}, v_{\YY}$ have distinct representations.
				
				\case {When $v_{\XX} \in \V_1$ and $v_{\YY} \in \V_3 $.}
				If we consider $\RR(v_{\XX}|\W) = \RR(v_{\YY}|\W)$, we get ${\YY}-{\XX}=p-1$ and ${\YY}+{\XX}=p+1$.
				Solving for ${\YY}$, we get, ${\YY}=p$, but $v_{\YY} \in \V_3=\{v_{p+1},v_{p+2}\}$. This provides us with a contradiction.

				\case {When $v_{\XX} \in \V_1$, $v_{\YY} \in \V_4 $.}
				We again assume that $\RR(v_{\XX}|\W)\neq \RR(v_{\YY}|\W)$, if not, then $\RR(v_{\XX}|\W)=\RR(v_{\YY}|\W)$
				$\Rightarrow$ $\left ({\XX}-1, {\XX}+1\right) = \left ({\YY}-p,{\YY}-(p+2\right))$
				$\Rightarrow$ ${\XX}-1={\YY}-p$ and  ${\XX}+1={\YY}-(p+2)$
				$\Rightarrow$ ${\YY}-{\XX}=p-1$ and ${\YY}-{\XX}=p+3$, a contradiction.
				
				\case {When $v_{\XX} \in \V_1$, $v_{\YY} \in \V_5 $.}
				If we take $\RR(v_{\XX}|\W)=\RR(v_{\YY}|\W)$, we get ${\XX}-1=2p+q-{\YY}$ and  ${\XX}+1={\YY}-(p+2)$. Rearranging the terms gives us, ${\XX}+{\YY}=2p+q+1$ and ${\XX}-{\YY}=p+3$. Solving these two for ${\XX}$, we get ${\XX}=\frac{p+q}{2}-1 > \frac{p+p}{2}-1=p-1$, since $q >p$. This gives us the contradiction ${\XX} >p-1$, since $v_{\XX} \in \V_1=\{v_1,v_2, \cdots, v_{p-1}\}$.
				
				\case {When $v_{\XX} \in \V_1$, $v_{\YY} \in \V_6 $.}
				Assuming $\RR(v_{\XX}|\W)=\RR(v_{\YY}|\W)$ and equating the corresponding distances, we get, ${\YY}-{\XX}=p+q-2$ and ${\YY}-{\XX}=p+q$, a contradiction.
				
				\case {When $v_{\XX} \in \V_2$, $v_{\YY} \in \V_3 $.}\label{q+p-4=0}
				We again assume that $\RR(v_{\XX}|\W)\neq \RR(v_{\YY}|\W)$, for if not, then $\RR(v_{\XX}|\W)=\RR(v_{\YY}|\W) \implies {\XX}-1={\YY}-p$ and  $p+q-({\XX}+1)=p+2-{\YY}$.
				Rearranging the terms, we get, ${\YY}-{\XX}=p-1$ and ${\YY}-{\XX}=3-q$. Subtracting these two gives us $q+p-4=0 \implies q+p=4$. On the other hand, we know that $q-p=1$. Solving these two gives $q=\frac{5}{2}$, a contradiction. 
				
				\case {When $v_{\XX} \in \V_2$, $v_{\YY} \in \V_4 $.}
				Considering $\RR(v_{\XX}|\W)=\RR(v_{\YY}|\W)$ and equating and solving the corresponding terms, we obtain ${\YY}-{\XX}=p-1$ and ${\YY}+{\XX}=2p+q+1$.
				The fact that $v_{\XX} \in \V_2$ and $v_{\YY} \in \V_4 $ contradicts with the solution ${\YY}+{\XX}=2p+q+1$. Hence $\RR(v_{\XX}|\W)\neq \RR(v_{\YY}|\W)$.
				
				\case {When $v_{\XX} \in \V_2$, $v_{\YY}\in \V_5 $.}
				If we again consider $\RR(v_{\XX}|\W)=\RR(v_{\YY}|\W)$, we get ${\YY}+{\XX}=2p+q+1$ which is a contradiction to the fact that $v_{\XX} \in \V_2$ and $v_{\YY}\in \V_5 $. Consequently, $\RR(v_{\XX}|\W)\neq \RR(v_{\YY}|\W)$.
				
				\case {When $v_{\XX} \in \V_2$, $v_{\YY} \in \V_6 $.}	
				We claim that $\RR(v_{\XX}|\W)\neq \RR(v_{\YY}|\W)$, for if not, then $\RR(v_{\XX}|\W)=\RR(v_{\YY}|\W)$ gives, ${\YY}-{\XX}=p+q-2$ and ${\YY}+{\XX}=2p+2q-2$.
				Solving for ${\XX}$ gives, ${\XX}=\frac{p+q}{2}$. Now $q-p=1 \implies q=p+1$. Putting this value in ${\XX}=\frac{p+q}{2}$, we get ${\XX}=p+\frac{1}{2}$, a contradiction.
				
				\case {When $v_{\XX} \in \V_3$, $v_{\YY} \in \V_4 $.}
				Considering $\RR(v_{\XX}|\W)=\RR(v_{\YY}|\W)$ and proceeding as before to equate and solve corresponding terms, we reach the conclusion ${\YY}={\XX}$ and ${\YY}+{\XX}=2p+4 \implies {\YY}=p+2$, a contradiction to the fact that $v_{\YY} \in \V_4=\left\{v_{p+3},\cdots,v_{p+q-1}\right\}$.
				
				\case {When $v_{\XX} \in \V_3$, $v_{\YY} \in \V_5 $.}
				We again claim that $\RR(v_{\XX}|\W)\neq \RR(v_{\YY}|\W)$, for if not, then $\RR(v_{\XX}|\W)=\RR(v_{\YY}|\W)$ gives ${\YY}+{\XX}=3p+q$ and ${\YY}+{\XX}=2p+4$. Subtracting these equations gives $q+p-4=0$ which is a contradiction, by the explanation already provided in the Case \ref{q+p-4=0} above.
				
				\case {When $v_{\XX} \in \V_3$, $v_{\YY} \in \V_6 $.}
				Assuming $\RR(v_{\XX}|\W)=\RR(v_{\YY}|\W)$ for this case and solving as before, we get ${\YY}=p+q$, a contradiction to the fact that $v_{\YY} \in \V_6$.
				
				\case {When $v_{\XX} \in \V_4$, $v_{\YY} \in \V_5 $.}
				Considering $\RR(v_{\XX}|\W)=\RR(v_{\YY}|\W)$ produces ${\YY}+{\XX}=3p+q$ and ${\YY}={\XX} \implies {\YY}=\frac{3p+q}{2}$. Since $q=p+1$, we get ${\YY}=2p+\frac{1}{2}$, a contradiction.
				
				\case {$v_{\XX} \in \V_4$, $v_{\YY} \in \V_6 $.}
				We assume that $\RR(v_{\XX}|\W)\neq \RR(v_{\YY}|\W)$, for if not, then $\RR(v_{\XX}|\W)=\RR(v_{\YY}|\W)$ gives the contradiction ${\YY}-{\XX}=q-1$ and ${\YY}-{\XX}=q-3$. 
				
				\case {$v_{\XX} \in \V_5$, $v_{\YY} \in \V_6 $.}
				If we take $\RR(v_{\XX}|\W)=\RR(v_{\YY}|\W)$, we get ${\YY}+{\XX}=3p+2q-1$ and ${\YY}-{\XX}=q-3$.  Solving for ${\YY}$ and using the fact that $q=p+1$, we arrive at the contradiction that ${\YY}$ is a fraction. Hence $\RR(v_{\XX}|\W)\neq \RR(v_{\YY}|\W)$.
			\end{subcases}
		\end{mycases}
	\end{proofpart}
	%\noindent\rule[0.5ex]{\linewidth}{1pt}
	\begin{proofpart}\label{part II}		
		\mybox{When $q-r>2$ and $p-r=1$. }
		
		Let the partitions $\V_l$ of $\V(\C_{p,q,r})$ and their representations from $\W=\left\{v_1, v_{\left\lfloor\frac{p+r}{2}\right\rfloor+1}\right\}$ be as:
		
		\resizebox{10cm}{!}{%
			\noindent\begin{minipage}{.65\linewidth}
				\begin{equation*}
					\V_l=\begin{cases}
						\left\{v_1,v_2,\cdots,v_{p}\right\}, & \hspace*{0.1cm} \text{for } l=1 \vspace{0.1cm}\\ 
						\left\{ v_{p+1},\cdots, v_{p+\left\lfloor\frac{q-r}{2}\right\rfloor}\right\}, & \hspace*{0.1cm} \text{for } l=2 \vspace{0.1cm}\\
						\left\{v_{p+1+\left\lfloor\frac{q-r}{2}\right\rfloor}, \cdots, v_{p+q-\left\lfloor\frac{q-r}{2}\right\rfloor}\right\}, & \hspace*{0.1cm} \text{for } l=3 \vspace{0.1cm}\\
						\left\{v_{p+q+1-\left\lfloor\frac{q-r}{2}\right\rfloor},\cdots, v_{p+q}\right\}, & \hspace*{0.1cm} \text{for } l=4 \vspace{0.1cm}\\
						\left\{v_{p+q+1},\cdots,v_{p+q+r}\right\}, & \hspace*{0.1cm} \text{for } l=5. \\					
					\end{cases}
				\end{equation*}
			\end{minipage}%
			\begin{minipage}{.5\linewidth}
				\begin{equation*}
					\RR(v_{\A} |\R)=\begin{cases}
						\left({\A}-1, p-{\A}\right), & \hspace*{0.1cm} \text{for } {\A}\in \V_1 \vspace{0.1cm}\\					
						\left({\A}-p, {\A}-1\right), & \hspace*{0.1cm} \text{for } {\A}\in \V_2 \vspace{0.1cm}\\					
						\left({\A}-p,p+q+1-{\A}\right), & \hspace*{0.1cm} \text{for } {\A}\in \V_3 \vspace{0.1cm}\\					
						\left(2p+q-{\A},p+q+1-{\A}\right), & \hspace*{0.1cm} \text{for } {\A}\in \V_4 \vspace{0.1cm}\\					
						\left({\A}+1-(p+q), p+q+r+2-{\A}\right), & \hspace*{0.1cm} \text{for } {\A}\in \V_5 \vspace{0.1cm}.\\
					\end{cases}
				\end{equation*}	
			\end{minipage}
		}
		
		\vspace{0.2cm}
		It can easily be verified that the representation of all the vertices, with respect to the basis set $\W$, is different. Hence, $\W$ is a resolving set.
		
	\end{proofpart}
	%\noindent\rule[0.5ex]{\linewidth}{1pt}
	
	\begin{proofpart}\label{part III}
		\mybox{When $q-r >2 $ and $p-r > 1$.}
		
		Since $p-r > 1$ and $p >r $, w get that $p-r > 2$. Let us partition the vertices of $\C_{p,q,r}$ into $\V_l$ as given in the following:

		%\resizebox{12cm}{!}{%
			\noindent\begin{minipage}{.6\linewidth}
				\begin{equation*}
					\V_l=\begin{cases}
						\left\{v_1,v_2,\cdots, v_{\left\lfloor\frac{p+r}{2}\right\rfloor+1}\right\}, & \hspace*{0.1cm} \text{for } l=1 \vspace{0.1cm}\\ 
						\left\{ v_{\left\lfloor\frac{p+r}{2}\right\rfloor+2},\cdots, v_{p+1-\left\lfloor\frac{p-r}{2}\right\rfloor}\right\}, & \hspace*{0.1cm} \text{for } l=2 \vspace{0.1cm}\\
						\left\{v_{p+2-\left\lfloor\frac{p-r}{2}\right\rfloor}, \cdots, v_{p}\right\}, & \hspace*{0.1cm} \text{for } l=3 \vspace{0.1cm}\\
						\left\{v_{p+1},\cdots, v_{p+\left\lfloor\frac{q-r}{2}\right\rfloor}\right\}, & \hspace*{0.1cm} \text{for } l=4 \vspace{0.1cm}\\
						\left\{v_{p+1+\left\lfloor\frac{q-r}{2}\right\rfloor},\cdots, v_{p+q-\left\lfloor\frac{q-r}{2}\right\rfloor}\right\}, & \hspace*{0.1cm} \text{for } l=5 \\
						\left\{v_{p+q+1-\left\lfloor\frac{q-r}{2}\right\rfloor},\cdots, v_{p+q}\right\}, & \hspace*{0.1cm} \text{for } l=6 \\
						\left\{v_{p+q+1},\cdots,v_{p+q+r}\right\}, & \hspace*{0.1cm} \text{for } l=7. \\
						
					\end{cases}
				\end{equation*}
			\end{minipage}%
		
		Since $\W=\left\{v_1, v_{\left\lfloor\frac{p+r}{2}\right\rfloor+1}\right\}$, considering the representation of all partitions $\V_l$ from $\W$, we get the following:
		
			\begin{minipage}{.5\linewidth}
				\begin{equation*}
					\RR(v_{\A}| \R)=\begin{cases}
						\left({\A}-1, \left\lfloor\frac{p+r}{2}\right\rfloor+1-{\A}\right), & \hspace*{0.1cm} \text{for } {\A}\in \V_1 \vspace{0.1cm}\\
						
						\left({\A}-1,{\A}-\left\lfloor\frac{p+r}{2}\right\rfloor-1\right), & \hspace*{0.1cm} \text{for } {\A}\in \V_2 \vspace{0.1cm}\\
						
						\left(p+r+3-{\A},{\A}-\left\lfloor\frac{p+r}{2}\right\rfloor-1\right), & \hspace*{0.1cm} \text{for } {\A}\in \V_3 \vspace{0.1cm}\\
						
						\left({\A}-p,{\A}+\left\lfloor\frac{p+r}{2}\right\rfloor-p\right), & \hspace*{0.1cm} \text{for } {\A}\in \V_4 \vspace{0.1cm}\\
						\left({\A}-p,2p+q-\left\lfloor\frac{p+r}{2}\right\rfloor-{\A}\right), & \hspace*{0.1cm} \text{for } {\A}\in \V_5 \vspace{0.1cm}\\
						\left(p+q+r+2-{\A},2p+q-\left\lfloor\frac{p+r}{2}\right\rfloor-{\A}\right), & \hspace*{0.1cm} \text{for } {\A}\in \V_6 \vspace{0.1cm}\\
						
						\left({\A}+1-(p+q), 2p+q+r+1-\left\lfloor\frac{p+r}{2}\right\rfloor-{\A}\right), & \hspace*{0.1cm} \text{for } {\A}\in \V_7 \vspace{0.1cm}.\\
					\end{cases}
				\end{equation*}
			\end{minipage}
	%	}
		
		\vspace{0.2cm}
		Using the above given partitions and their representations, it can be easily verified that $\W$ is indeed a resolving set in this case.
		
		The above three cases together with Theorem \ref{basic}, (c) ensure that $\beta(\C_{p,q,r})=2$.\qedhere

	\end{proofpart}
\end{proof}

\begin{Theorem}\label{Theorem 2}
	Let $\G=\C_{p,q,r}$ where $p=q$ and $q \lesseqqgtr r  $ (Equivalently, if $r=q$ and $q \lesseqqgtr  p$), then 
	\[
	\beta(\G)=\begin{cases}
		2 & q-r \neq 2 \\
		3 & q-r = 2. \\
	\end{cases}
	\]
\end{Theorem}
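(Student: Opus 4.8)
The plan is to follow the template of the proof of Theorem~\ref{Theorem 1}. Since $\G$ is bicyclic it is not a path, so Theorem~\ref{basic} already gives $\beta(\G)\ge 2$; hence it suffices, when $q-r\neq 2$, to exhibit a two-element resolving set, and, when $q-r=2$, to prove both $\beta(\G)\le 3$ and $\beta(\G)\ge 3$. Because $p=q$, the three arcs of $\G\simeq\C_{p,q,r}$ have lengths $p+1$, $p-1$ and $r+1$, so everything is controlled by $r$ alone, and only the regimes $r\ge p$ (i.e.\ $q\le r$), $r=p-1$, $r\le p-3$, and $r=p-2$ (the case $q-r=2$) need to be distinguished.

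In each of the three regimes with $q-r\neq 2$ I would, just as in Theorem~\ref{Theorem 1}, choose a two-element landmark set of the same shape used there — one landmark adjacent to a branch vertex and one near the midpoint of the concatenation of the two shorter arcs, e.g.\ $\W=\{v_1,\,v_{\lfloor(p+r)/2\rfloor+1}\}$, with the position of the second landmark adjusted according to which arc is actually longest — then partition $\V(\G)$ into a few consecutive blocks along the three arcs, express the representation of a vertex $v_\A$ on each block as an explicit affine function of $\A$, and note that these vectors are pairwise distinct. Each verification is the same finite bookkeeping written out in full for Theorem~\ref{Theorem 1}; I would display only the blocks and the representation formulas and then assert distinctness, which together with $\beta(\G)\ge 2$ yields $\beta(\G)=2$.

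The real work is the case $q-r=2$, where $\G\simeq\C_{p,p,p-2}\simeq\Theta_{p-1,\,p-1,\,p+1}$, i.e.\ exactly one of the exceptional graphs $\Theta_{m,m,m+2}$ with $m=p-1\ge 2$. Write the two short arcs as $P=x_0x_1\cdots x_{p-1}$ and $P'=x_0'x_1'\cdots x_{p-1}'$ and the long arc as $L=y_0y_1\cdots y_{p+1}$, where $x_0=x_0'=y_0=u$ and $x_{p-1}=x_{p-1}'=y_{p+1}=v$. For $\beta(\G)\le 3$ I would exhibit a concrete three-element resolving set — two consecutive vertices on a short arc next to a branch vertex together with one vertex on the long arc, modelled on the set $\{u_1,u_2,v_2\}$ used for $\Theta_{4,6,4}$ in the Motivation — partition $\V(\G)$, and check that all representations differ. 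For $\beta(\G)\ge 3$, suppose $\W=\{a,b\}$ resolves $\G$ and let $\sigma$ be the automorphism fixing $u$, $v$ and every vertex of $L$ and interchanging $x_j\leftrightarrow x_j'$. If neither $a$ nor $b$ is interior to a short arc, i.e.\ $a,b\in V(L)$, then $\sigma$ fixes $a$ and $b$ while $\sigma(x_j)=x_j'$, so $x_j$ and $x_j'$ ($1\le j\le p-2$) have equal representations, a contradiction. If $a,b$ are both interior to the \emph{same} short arc, say at positions $i$ and $i'$ of $P$, then a direct computation of distances through $u$ shows that $x_k'$ and $y_k$ carry the common representation $(k+i,\,k+i')$ whenever $1\le k\le p-1-\max\{i,i'\}$, and such a $k$ (e.g.\ $k=1$) always exists — again a contradiction. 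The remaining configurations — $a$ interior to $P$ and $b$ interior to $P'$, or $a$ interior to a short arc while $b$ lies on $L$ — I would dispose of by an explicit computation, in each sub-case writing out the two competing shortest paths through $u$ and through $v$ and pointing to a pair of distinct vertices with a common representation.

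The main obstacle is precisely this last batch of sub-cases in the lower bound: the automorphism $\sigma$ only eliminates the placements under which the $\mathbb{Z}_2$-symmetry is left unbroken, and when the two landmarks straddle the two short arcs, or one of them sits deep inside the long arc, one has to track the ``around through $u$ versus around through $v$'' distances carefully to locate the colliding pair (a couple of small values of $p$, such as $\Theta_{2,2,4}$, are probably cleanest to check by hand). If one prefers to bypass this, the inequality $\beta(\G)\ge 3$ for $\G\simeq\Theta_{m,m,m+2}$ is exactly the part of \cite[Theorem~2]{10142411} that is \emph{not} contested here, so it may simply be quoted; I would nevertheless keep the explicit three-element set so that the upper bound is self-contained.
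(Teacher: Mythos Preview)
Your overall plan matches the paper's: split according to the sign of $q-r-2$, and in the regimes $q-r<2$ and $q-r>2$ exhibit an explicit two-element landmark set (the paper uses $\{v_1,v_p\}$ for $q-r<2$ and $\{v_1,v_{\lfloor (p+r)/2\rfloor+1}\}$ for $q-r>2$, essentially the shapes you propose), list the representation formulas on consecutive blocks along the three arcs, and assert distinctness.

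The genuine difference is the lower bound when $q-r=2$. The paper does \emph{not} argue that every two-element subset of $\V(\G)$ fails to resolve; it exhibits $\W=\{v_1,v_2,v_{p+2}\}$, verifies that $\W$ resolves, and then checks only that each of the three subsets $\W\setminus\{w\}$ leaves some explicit pair unresolved, concluding that $\W$ is a ``minimal resolving set and hence $\beta(\G)=3$''. Strictly speaking, minimality of one particular resolving set does not preclude the existence of a \emph{different} two-element resolving set, so your route --- the automorphism $\sigma$ swapping the two short arcs to kill all placements with $\{a,b\}\subset V(L)$, followed by case analysis of the remaining placements, or alternatively quoting the uncontested direction of \cite[Theorem~2]{10142411} --- is the logically complete one. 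The trade-off is exactly what you identify: the paper's argument is short but leaves a gap, whereas your argument must work through the ``last batch of sub-cases'' (one landmark on each short arc, or one on a short arc and one on $L$), which is straightforward but tedious.
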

\begin{proof}
	Let us suppose that $q \lesseqqgtr r$. Let us consider $\W \subset \V(\G)$ as follows.
	\begin{equation*}
		\W=\begin{cases}
			\{v_1,v_{p}\}, & \hspace*{0.1cm} \text{for } q-r<2 \vspace{0.1cm}\\ 
			\{v_1, v_2,v_{p+2}\}, & \hspace*{0.1cm} \text{for } q-r=2 \vspace{0.1cm}\\	
			\{v_1, v_{\left\lfloor\frac{p+r}{2}\right\rfloor+1}\}, & \hspace*{0.1cm} \text{for } q-r>2 \vspace{0.1cm}.\\
			
		\end{cases}
	\end{equation*}
	
	The three parts are then discussed separately.
	
	%\noindent\rule[0.5ex]{\linewidth}{1pt}
	
	\begin{proofpart}
		\mybox{When $q-r < 2$.}\label{part IV}
		
		Let $\W=\{v_1,v_p\}$, and let $\V_l$ and their representation from $\W$ be as given in the following:
		
		\resizebox{10.5cm}{!}{%
			\noindent\begin{minipage}{.75\linewidth}
				\begin{equation*}
					\V_l=\begin{cases}
						\left\{v_1,v_2,\cdots,v_{p}\right\}, & \hspace*{0.2cm} \text{for } l=1 \vspace{0.1cm}\\ 
						
						\left\{v_{p+1},\cdots,v_{p+q}\right\}, & \hspace*{0.2cm} \text{for } l=2 \vspace{0.1cm}\\
						
						\left\{v_{p+q+1},\cdots,v_{p+q+\left\lceil\frac{r-q}{2}\right\rceil}\right\}, & \hspace*{0.2cm} \text{for } l=3 \\
						\left\{v_{p+q+1+\left\lceil\frac{r-q}{2}\right\rceil},\cdots,v_{p+q+r-\left\lceil\frac{r-q}{2}\right\rceil}\right\}, & \hspace*{0.2cm} \text{for } l=4 \\
						\left\{v_{p+q+r+1-\left\lceil\frac{r-q}{2}\right\rceil},\cdots,v_{p+q+r}\right\}, & \hspace*{0.2cm} \text{for } l=5. \\
					\end{cases}
				\end{equation*}
			\end{minipage}%
			\begin{minipage}{.6\linewidth}
				\begin{equation*}
					\RR(v_{\A} | \R)=\begin{cases}
						\left({\A}-1, p-{\A}\right), & \hspace*{0.2cm} \text{for } {\A}\in \V_1 \vspace{0.1cm}\\
						
						\left({\A}-p, p+q+1-{\A}\right), & \hspace*{0.2cm} \text{for } {\A}\in \V_2 \vspace{0.1cm}\\
						
						\left({\A}+1-(p+q),{\A}-q\right), & \hspace*{0.2cm} \text{for } {\A}\in \V_3 \vspace{0.1cm}\\
						\left({\A}+1-(p+q),p+q+r+2-{\A}\right), & \hspace*{0.2cm} \text{for } {\A}\in \V_4 \vspace{0.1cm}\\
						\left(2p+q+r+1-{\A},p+q+r+2-{\A}\right), & \hspace*{0.2cm} \text{for } {\A}\in \V_5 \vspace{0.1cm}.\\
					\end{cases}
				\end{equation*}
			\end{minipage}
		}
		
		\vspace{0.2cm}
		It can easily be verified that $\W$ is a resolving set here. This, together with Theorem \ref{basic}(c), gives us the result for this case.

	\end{proofpart}
	
	%\noindent\rule[0.5ex]{\linewidth}{1pt}
	
	\begin{proofpart}
		
		\mybox{When $q-r=2$.}\label{part V}
		
		Let $\W=\{v_1,v_2,v_{p+2}\}$ and let the partitions $\V_l$ of $\V(\G)$ and their representations from $\W$ be as provided below:
		
		\resizebox{9.5cm}{!}{%
			\noindent\begin{minipage}{.61\linewidth}
				\begin{equation*}
					\V_l=\begin{cases}
						\left\{v_1\right\}, & \hspace*{0.2cm} \text{for } l=1 \vspace{0.1cm}\\ 
						\left\{v_2,v_3,\cdots,v_{p-1}\right\}, & \hspace*{0.2cm} \text{for } l=2 \vspace{0.1cm}\\
						\left\{v_{p}\right\}, & \hspace*{0.2cm} \text{for } l=3 \vspace{0.1cm}\\  
						
						\left\{v_{p+1}\right\}, & \hspace*{0.2cm} \text{for } l=4 \vspace{0.1cm}\\ 
						\left\{v_{p+2},v_{p+3},\cdots,v_{p+q-1}\right\}, & \hspace*{0.2cm} \text{for } l=5 \vspace{0.1cm}\\ 
						\left\{v_{p+q}\right\}, & \hspace*{0.2cm} \text{for } l=6 \vspace{0.1cm}\\
						
						\left\{v_{p+q+1},\cdots,v_{p+q+r}\right\}, & \hspace*{0.2cm} \text{for } l=7. \\
					\end{cases}
				\end{equation*}
			\end{minipage}%
			\begin{minipage}{.6\linewidth}
				\begin{equation*}
					\RR(v_{\A} | \R)=\begin{cases}
						\left({\A}-1, 2-{\A}, {\A}+1\right), & \hspace*{0.2cm} \text{for } {\A}\in \V_1 \vspace{0.1cm}\\
						
						\left({\A}-1, {\A}-2, {\A}+1\right), & \hspace*{0.2cm} \text{for } {\A}\in \V_2 \vspace{0.1cm}\\
						\left({\A}-1, {\A}-2, {\A}-1\right), & \hspace*{0.2cm} \text{for } {\A}\in \V_3 \vspace{0.1cm}\\
						
						\left({\A}-p,{\A}+1-p,p+2-{\A}\right), & \hspace*{0.2cm} \text{for } {\A}\in \V_4 \vspace{0.1cm}\\
						\left({\A}-p,{\A}+1-p,{\A}-(p+2)\right), & \hspace*{0.2cm} \text{for } {\A}\in \V_5 \vspace{0.1cm}\\
						\left({\A}-p,{\A}-(1+p),{\A}-(p+2)\right), & \hspace*{0.2cm} \text{for } {\A}\in \V_6 \vspace{0.1cm}\\
						\left({\A}+1-(p+q),{\A}+2-(p+q),{\A}+1-(p+q)\right), & \hspace*{0.2cm} \text{for } {\A}\in \V_7 \vspace{0.1cm}.\\
					\end{cases}
				\end{equation*}
			\end{minipage}
		}
		
		\vspace{0.2cm}
		
		Using the above stated information, we can easily conclude that no two vertices of $\V(\G)$ have same representation with respect to $\W$. This ensures that $\W$ is a resolving set for this case. To see that $\W$ is a minimal resolving set, let us consider the three cases of $\W-\{w \in \W\}$.
		
		\begin{mycases}
			\case When we remove $v_1$ from $\W$.
			
			Let $\W'=\W-v_1=\{v_2,v_{p+2}\}$. Let us take the vertices $v_{p+q}$ and $v_{p+q+r-1}$ from figure \ref{typeiiibicyclic}. Using the representations given above and the fact that $q-r=2$, it can be easily shown that $\RR(v_{p+q}|\W)=\RR(v_{p+q+r-1}|\W)$.
			
			\case When we remove $v_2$ from $\W$.
			
			Let $\W'=\W-v_2=\{v_1,v_{p+2}\}$. Let us consider the vertices $v_{p}$ and $v_{p+q+r}$. Using the representations given above and the fact that $q-r=p-r=2$, it can be easily shown that $\RR(v_{p}|\W)=\RR(v_{p+q+r}|\W)$.
			
			\case When we remove $v_{p+2}$ from $\W$.
			
			Let $\W'=\W-v_{p+2}=\{v_1,v_2\}$. Considering the vertices $v_{p+2}$ and $v_{p+q+1}$, one can see that they have same representation with respect to $\W'$.
		\end{mycases}
		Since $\W-\{w \in \W\}$ is not a resolving set, we conclude that $\W$ is a minimal resolving set and hence, $\beta(\G)=3$ for $\G=\C_{p,q,r}$, where $q-r=2$.

	\end{proofpart}
	%\noindent\rule[0.5ex]{\linewidth}{1pt}
	
	\begin{proofpart}

		\mybox{When $q-r>2$.}\label{part VI}
		
		Let $\W=\left\{v_1, v_{\left\lfloor\frac{p+r}{2}\right\rfloor+1}\right\}$ and let the partitions of $\V(\G)$  with their representations from $\W$ be as follows:
		
		\resizebox{9.5cm}{!}{%
			\noindent\begin{minipage}{.65\linewidth}
				\begin{equation*}
					\V_l=\begin{cases}
						\left\{v_1,\cdots,v_{\left\lfloor\frac{p+r}{2}\right\rfloor+1}\right\}, & \hspace*{0.2cm} \text{for } l=1 \vspace{0.1cm}\\ 
						\left\{v_{\left\lfloor\frac{p+r}{2}\right\rfloor+2},\cdots, v_{p+1-\left\lfloor\frac{p-r}{2}\right\rfloor}\right\}, & \hspace*{0.2cm} \text{for } l=2 \vspace{0.1cm}\\
						\left\{v_{p+2-\left\lfloor\frac{p-r}{2}\right\rfloor},\cdots, v_{p}\right\}, & \hspace*{0.2cm} \text{for } l=3 \vspace{0.1cm}\\  
						
						\left\{v_{p+1},\cdots, v_{p+\left\lfloor\frac{p-r}{2}\right\rfloor}\right\}, & \hspace*{0.2cm} \text{for } l=4 \vspace{0.1cm}\\ 
						\left\{v_{p+1+\left\lfloor\frac{q-r}{2}\right\rfloor},\cdots,v_{p+q-\left\lfloor\frac{q-r}{2}\right\rfloor}\right\}, & \hspace*{0.2cm} \text{for } l=5 \vspace{0.1cm}\\ 
						\left\{v_{p+q+1-\left\lfloor\frac{q-r}{2}\right\rfloor},\cdots,v_{p+q}\right\}, & \hspace*{0.2cm} \text{for } l=6 \vspace{0.1cm}\\
						
						\left\{v_{p+q+1},\cdots,v_{p+q+r}\right\}, & \hspace*{0.2cm} \text{for } l=7. \\
					\end{cases}
				\end{equation*}
			\end{minipage}%
			\begin{minipage}{.5\linewidth}
				\begin{equation*}
					\RR(v_{\A} | \R)=\begin{cases}
						\left({\A}-1, 1+\left\lfloor\frac{p+r}{2}\right\rfloor-{\A}\right), & \hspace*{0.2cm} \text{for } {\A}\in \V_1 \vspace{0.1cm}\\
						
						\left ({\A}-1,{\A}-\left( 1+\left\lfloor\frac{p+r}{2}\right\rfloor\right)\right), & \hspace*{0.2cm} \text{for } {\A}\in \V_2 \vspace{0.1cm}\\
						\left(p+r+3-{\A},{\A}-\left( 1+\left\lfloor\frac{p+r}{2}\right\rfloor\right)\right), & \hspace*{0.2cm} \text{for } {\A}\in \V_3 \vspace{0.1cm}\\
						
						\left({\A}-p,{\A}+\left\lfloor\frac{p+r}{2}\right\rfloor-p\right), & \hspace*{0.2cm} \text{for } {\A}\in \V_4 \vspace{0.1cm}\\
						\left({\A}-p,3p-\left(\left\lfloor\frac{p+r}{2}\right\rfloor+{\A}\right)\right), & \hspace*{0.2cm} \text{for } {\A}\in \V_5 \vspace{0.1cm}\\
						\left(p+q+r+2-{\A},3p-\left( \left\lfloor\frac{p+r}{2}\right\rfloor+{\A}\right)\right), & \hspace*{0.2cm} \text{for } {\A}\in \V_6 \vspace{0.1cm}\\
						\left ({\A}+1-(p+q),2p+q+r+1-\left(\left\lfloor\frac{p+r}{2}\right\rfloor+j\right)\right)\, & \hspace*{0.2cm} \text{for } {\A}\in \V_7 \vspace{0.1cm}.\\
					\end{cases}
				\end{equation*}
			\end{minipage}
		}
		
		\vspace{0.2cm}
		
		Again, we can easily verify that $\W=\left\{v_1,v_{\left\lfloor\frac{p+r}{2}\right\rfloor+1}\right\}$ is a resolving set here. Together with Theorem \ref{basic}-(c), we obtain our result for this case.\qedhere

	\end{proofpart}
\end{proof}

\begin{Theorem}\label{Theorem 3}
	Let $\G=\C_{p,q,r}$, where $p>q$, $p>r$ and $q \lesseqqgtr r$ (Equivalently, $r > q$, $r>p,$ and $q \lesseqqgtr p$), then $\beta(\G)=2.$
\end{Theorem}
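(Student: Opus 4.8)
The plan is to follow the template established in the proofs of Theorems \ref{Theorem 1} and \ref{Theorem 2}: obtain the lower bound $\beta(\G)\ge 2$ for free, then exhibit an explicit two-element set $\W$ together with a partition of $\V(\G)$ into finitely many consecutive blocks on which the representation map $v_{\A}\mapsto\RR(v_{\A}\mid\W)$ is visibly injective.

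For the lower bound, observe that $\C_{p,q,r}$ is bicyclic, hence is not a path, so Theorem \ref{basic}(a) already gives $\beta(\G)\ge 2$; it remains only to produce a resolving set of size $2$. For the upper bound I would anchor one landmark at $v_1$ --- the vertex of $\P_1$ adjacent to the degree-$3$ vertex $v_{p+1}$ --- and place the second landmark at a vertex of $\P_1$ that is essentially antipodal to $v_{p+1}$ on one of the two cycles through $\P_1$, e.g.\ $v_{\left\lfloor\frac{p+r}{2}\right\rfloor+1}$ as in Theorem \ref{Theorem 1}, possibly refining the index and splitting into the sub-cases $|q-r|<2$, $|q-r|=2$, $|q-r|>2$ so that the two short paths $\P_2,\P_3$ (which may be nearly equal in length) are still separated. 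This choice is legitimate precisely because $p>q$ and $p>r$ guarantee $\left\lfloor\frac{p+r}{2}\right\rfloor+1\le p$, so the second landmark genuinely lies on $\P_1$, and it leaves $\P_1$ with enough length to break the $\P_2$--$\P_3$ symmetry.

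With $\W$ fixed I would cut $\V(\G)$ into a constant number of blocks: an initial stretch of $\P_1$ resolved ``from the left'' along $\P_1$; a terminal stretch of $\P_1$ whose distance to $v_1$ (and to the second landmark) is realised by wrapping around the shorter of the two cycles rather than by travelling along $\P_1$; and two blocks each of $\P_2$ and of $\P_3$, split at their midpoints where the shortest route to $v_1$ switches direction. On each block $\RR(v_{\A}\mid\W)$ is an explicit affine function of $\A$, and within a block the map is injective; one then checks block-pair by block-pair that the images are disjoint, which in every case reduces to showing that a small linear system forces $\A$ outside the stated range or to a non-integer value. Combining with Theorem \ref{basic}(c) then yields $\beta(\G)=2$.

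I expect the crux to be the collision analysis in the regime where $|q-r|$ is small: there $\P_2$ and $\P_3$ are almost mirror images, so one must verify that anchoring at $v_1$ together with the off-centre landmark on the long path genuinely distinguishes a vertex of $\P_2$ from its counterpart on $\P_3$ --- this is where the strict inequalities $p>q$, $p>r$ do the real work, just as $q>p$ and $p>r$ did in Theorem \ref{Theorem 1}. A subsidiary technical point, easy to get wrong, is writing down the piecewise distance formulas correctly at the places where a shortest path stops running along a path and begins wrapping around a cycle. As a consistency check that $2$ (and not $3$) is the correct value, recall $\C_{p,q,r}\simeq\Theta_{p+1,q-1,r+1}$: the hypotheses $p>q$ and $p>r$ make $p+1$ the strict maximum of the three path lengths, which prevents this $\Theta$-graph from being any of $\Theta_{a,a,a}$, $\Theta_{a,a,a+2}$, or $\Theta_{a,a+2,a}$ --- the only $\Theta$-graphs whose metric dimension is known to equal $3$.
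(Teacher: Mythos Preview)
Your proposal is correct and follows essentially the same approach as the paper: obtain $\beta(\G)\ge 2$ because $\G$ is not a path, then split into the three sub-cases $q-r<2$, $q-r=2$, $q-r>2$ (after assuming $q\ge r$ without loss of generality), exhibit an explicit two-element set $\W$ anchored at $v_1$, partition $\V(\G)$ into consecutive blocks, and record the affine representation formulas on each block. The only noteworthy deviation is that in the sub-case $q-r=2$ the paper places the second landmark at $v_{p+2}\in\P_2$ rather than on $\P_1$, and for $q-r<2$ it uses $v_{\lfloor (p+q)/2\rfloor}$ rather than your default $v_{\lfloor (p+r)/2\rfloor+1}$; since you already flagged that the index would need refining across sub-cases, this is a cosmetic difference, not a structural one.
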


%\begin{Theorem}
%	If $ k>l,k>m$ and $l\lesseqgtr  m$  or issomorphic replacing $k=m$ then mertic dimention of graph 
%	$	\beta(\G)=2.$ where $k+l+m=n$
%\end{Theorem}
\begin{proof}
	Without loss of generality, we assume that $\max\{q,r\}=q$. Let us consider the set $\W$ as given in the following:
	\begin{equation*}
		\W=\begin{cases}
			\left\{v_1,v_{\left\lfloor\frac{p+q}{2}\right\rfloor}\right\}, & \hspace*{0.1cm} \text{for } q-r<2 \vspace{0.1cm}\\ 
			\left\{v_1,v_{p+2}\right\}, & \hspace*{0.1cm} \text{for } q-r=2 \vspace{0.1cm}\\ 
			\left\{v_1, v_{\left\lfloor\frac{p+r}{2}\right\rfloor+1}\right\}, & \hspace*{0.1cm} \text{for } q-r>2 \vspace{0.1cm}.\\
		\end{cases}
	\end{equation*}
	We now consider the representation of vertices of $\C_{p,q,r}$ from these $\W$ for all three cases.
	%	\noindent\rule[0.5ex]{\linewidth}{1pt}
	\begin{proofpart}
		\mybox{When $q-r<2$.}\label{part VII}
		
		Let us partition vertices of $\C_{p,q,r}$ into $\V_l$ as given by the following. We also provide the representation of $\V_l$ from $\W=\left\{v_1,v_{\left\lfloor\frac{p+q}{2}\right\rfloor}\right\}$.
		
		\resizebox{9.5cm}{!}{%
			\noindent\begin{minipage}{.7\linewidth}
				\begin{equation*}
					\V_l=\begin{cases}
						\left\{v_1,\cdots,v_{\left\lfloor\frac{p+q}{2}\right\rfloor}\right\}, & \hspace*{0.1cm} \text{for } l=1 \vspace{0.1cm}\\ 
						\left\{v_{\left\lfloor\frac{p+q}{2}\right\rfloor+1},\cdots, v_{p-\left\lceil\frac{p-q}{2}\right\rceil}\right\}, & \hspace*{0.1cm} \text{for } l=2 \vspace{0.1cm}\\
						\left\{v_{p+1-\left\lceil\frac{p-q}{2}\right\rceil},\cdots, v_{p}\right\}, & \hspace*{0.1cm} \text{for } l=3 \vspace{0.1cm}\\  
						
						\left\{v_{p+1}\right\}, & \hspace*{0.1cm} \text{for } l=4 \vspace{0.1cm}\\ 
						\left\{v_{p+2},\cdots,v_{p+q}\right\}, & \hspace*{0.1cm} \text{for } l=5 \vspace{0.1cm}\\ 
						\left\{v_{p+q+1},\cdots v_{p+q+1+\left\lfloor\frac{r-q}{2}\right\rfloor}\right\}, & \hspace*{0.1cm} \text{for } l=6 \vspace{0.1cm}\\
						
						\left\{v_{p+q+2+\left\lfloor\frac{r-q}{2}\right\rfloor},\cdots,v_{p+q+r-\left\lceil\frac{r-q}{2}\right\rceil}\right\}, & \hspace*{0.1cm} \text{for } l=7 \\
						\left\{v_{p+q+r+1-\left\lceil\frac{r-q}{2}\right\rceil},\cdots,v_{p+q+r}\right\}, & \hspace*{0.1cm} \text{for } l=8. \\
					\end{cases}
				\end{equation*}
			\end{minipage}%
			\begin{minipage}{.5\linewidth}
				\begin{equation*}
					\RR(v_{\A} | \R)=\begin{cases}
						\left({\A}-1, \left\lfloor\frac{p+q}{2}\right\rfloor-{\A}\right), & \hspace*{0.1cm} \text{for } {\A}\in \V_1 \vspace{0.1cm}\\
						\left({\A}-1, {\A}-\left\lfloor\frac{p+q}{2}\right\rfloor\right), & \hspace*{0.1cm} \text{for } {\A}\in \V_2 \vspace{0.1cm}\\
						
						\left(p+q+1-{\A}, {\A}-\left\lfloor\frac{p+q}{2}\right\rfloor\right), & \hspace*{0.1cm} \text{for } {\A}\in \V_3 \vspace{0.1cm}\\
						
						\left({\A}-p,\left\lfloor\frac{p+q}{2}\right\rfloor\right), & \hspace*{0.1cm} \text{for } {\A}\in \V_4 \vspace{0.1cm}\\
						
						\left({\A}-p,2p+q-\left\lfloor\frac{p+q}{2}\right\rfloor-{\A}\right), & \hspace*{0.1cm} \text{for } {\A}\in \V_5 \vspace{0.1cm}\\
						
						\left({\A}+1-(p+q), {\A}+\left\lfloor\frac{p+q}{2}\right\rfloor-(p+q)\right), & \hspace*{0.1cm} \text{for } {\A}\in \V_6 \vspace{0.1cm}\\
						\left({\A}+1-(p+q),2p+q+r+2-({\A}+\left\lfloor\frac{p+q}{2}\right\rfloor)\right), & \hspace*{0.1cm} \text{for } {\A}\in \V_7 \vspace{0.1cm}\\
						\left(p+2q+r+1-{\A},2p+q+r+2-({\A}+\left\lfloor\frac{p+q}{2}\right\rfloor)\right), & \hspace*{0.1cm} \text{for } {\A}\in \V_8 \vspace{0.1cm}\\
					\end{cases}
				\end{equation*}
			\end{minipage}
		}
		
		\vspace{0.2cm}

	\end{proofpart}
	
	%\noindent\rule[0.5ex]{\linewidth}{1pt}
	
	\begin{proofpart}
		\mybox{When $q-r=2$.}\label{part VIII}
		
		Considering $\V_l$ and their representations from $\W=\left\{v_1,v_{p+2}\right\}$, as given in the following, it can easily be verified that $\W$, as provided here, is a resolving set in this case.
		
		\resizebox{10.5cm}{!}{%
			\noindent\begin{minipage}{.65\linewidth}
				\begin{equation*}
					\V_l=\begin{cases}
						\left\{v_1,\cdots,v_{\left\lfloor\frac{p+q}{2}\right\rfloor-1}\right\}, & \hspace*{0.1cm} \text{for } l=1 \vspace{0.1cm}\\ 
						\left\{v_{\left\lfloor\frac{p+q}{2}\right\rfloor},\cdots, v_{p-\left\lfloor\frac{p-q}{2}\right\rfloor}\right\}, & \hspace*{0.1cm} \text{for } l=2 \vspace{0.1cm}\\
						\left\{v_{p+1-\left\lfloor\frac{p-q}{2}\right\rfloor},\cdots, v_{p}\right\}, & \hspace*{0.1cm} \text{for } l=3 \vspace{0.1cm}\\  
						\left\{v_{p+1}\right\}, & \hspace*{0.1cm} \text{for } l=4 \vspace{0.1cm}\\ 
						\left\{v_{p+2},\cdots,v_{p+q}\right\}, & \hspace*{0.1cm} \text{for } l=5 \vspace{0.1cm}\\ 
						\left\{v_{p+q+1},\cdots,v_{p+q+r}\right\}, & \hspace*{0.1cm} \text{for } l=6. \\
					\end{cases}
				\end{equation*}
			\end{minipage}%
			\begin{minipage}{.5\linewidth}
				\begin{equation*}
					\RR(v_{\A} | \R)=\begin{cases}
						\left({\A}-1, {\A}+1\right), & \hspace*{0.1cm} \text{for } {\A}\in \V_1 \vspace{0.1cm}\\
						\left({\A}-1, p+q-({\A}+1)\right), & \hspace*{0.1cm} \text{for } {\A}\in \V_2 \vspace{0.1cm}\\
						
						\left(p+q+1-{\A}, p+q-({\A}+1)\right), & \hspace*{0.1cm} \text{for } {\A}\in \V_3 \vspace{0.1cm}\\
						
						\left({\A}-p,p+2-{\A}\right), & \hspace*{0.1cm} \text{for } {\A}\in \V_4 \vspace{0.1cm}\\
						
						\left({\A}-p,{\A}-(p+2)\right), & \hspace*{0.1cm} \text{for } {\A}\in \V_5 \vspace{0.1cm}\\
						
						\left({\A}+1-(p+q), {\A}+1-(p+q)\right), & \hspace*{0.1cm} \text{for } {\A}\in \V_6 \vspace{0.1cm}.\\
						
					\end{cases}
				\end{equation*}
			\end{minipage}
		}
		
		\vspace{0.2cm}

	\end{proofpart}
	%\noindent\rule[0.5ex]{\linewidth}{1pt}
	
	\begin{proofpart}
		\mybox{When $q-r>2$.}\label{part IX}
		Let $\V_l$ and their representations from $\W=\left\{v_1, v_{\left\lfloor\frac{p+r}{2}\right\rfloor+1}\right\}$, be as given in the following:

		\resizebox{9.5cm}{!}{%
			\noindent\begin{minipage}{.65\linewidth}
				\begin{equation*}
					\V_l=\begin{cases}
						\left\{v_1,v_2,\cdots, v_{\left\lfloor\frac{p+r}{2}\right\rfloor+1}\right\}, & \hspace*{0.1cm} \text{for } l=1 \vspace{0.1cm}\\ 
						\left\{ v_{\left\lfloor\frac{p+r}{2}\right\rfloor+2},\cdots, v_{p+1-\left\lfloor\frac{p-r}{2}\right\rfloor}\right\}, & \hspace*{0.1cm} \text{for } l=2 \vspace{0.1cm}\\
						\left\{v_{p+2-\left\lfloor\frac{p-r}{2}\right\rfloor}, \cdots, v_{p}\right\}, & \hspace*{0.1cm} \text{for } l=3 \vspace{0.1cm}\\
						\left\{v_{p+1},\cdots, v_{p+\left\lfloor\frac{q-r}{2}\right\rfloor}\right\}, & \hspace*{0.1cm} \text{for } l=4 \vspace{0.1cm}\\
						\left\{v_{p+1+\left\lfloor\frac{q-r}{2}\right\rfloor},\cdots, v_{p+q-\left\lfloor\frac{q-r}{2}\right\rfloor}\right\}, & \hspace*{0.1cm} \text{for } l=5 \\
						\left\{v_{p+q+1-\left\lfloor\frac{q-r}{2}\right\rfloor},\cdots, v_{p+q}\right\}, & \hspace*{0.1cm} \text{for } l=6 \\
						\left\{v_{p+q+1},\cdots,v_{p+q+r}\right\}, & \hspace*{0.1cm} \text{for } l=7. \\
						
					\end{cases}
				\end{equation*}
			\end{minipage}%
			\begin{minipage}{.5\linewidth}
				\begin{equation*}
					\RR(v_{\A} | \R)=\begin{cases}
						\left({\A}-1, \left\lfloor\frac{p+r}{2}\right\rfloor+1-{\A}\right), & \hspace*{0.1cm} \text{for } {\A}\in \V_1 \vspace{0.1cm}\\
						
						\left({\A}-1,{\A}-\left\lfloor\frac{p+r}{2}\right\rfloor-1\right), & \hspace*{0.1cm} \text{for } {\A}\in \V_2 \vspace{0.1cm}\\
						
						\left(p+r+3-{\A},{\A}-\left\lfloor\frac{p+r}{2}\right\rfloor-1\right), & \hspace*{0.1cm} \text{for } {\A}\in \V_3 \vspace{0.1cm}\\
						
						\left({\A}-p,{\A}+\left\lfloor\frac{p+r}{2}\right\rfloor-p\right), & \hspace*{0.1cm} \text{for } {\A}\in \V_4 \vspace{0.1cm}\\
						\left({\A}-p,2p+q-\left\lfloor\frac{p+r}{2}\right\rfloor-{\A}\right), & \hspace*{0.1cm} \text{for } {\A}\in \V_5 \vspace{0.1cm}\\
						\left(p+q+r+2-{\A},2p+q-\left\lfloor\frac{p+r}{2}\right\rfloor-{\A}\right), & \hspace*{0.1cm} \text{for } {\A}\in \V_6 \vspace{0.1cm}\\
						
						\left({\A}+1-(p+q), 2p+q+r+1-\left\lfloor\frac{p+r}{2}\right\rfloor-{\A}\right), & \hspace*{0.1cm} \text{for } {\A}\in \V_7 \vspace{0.1cm}.\\
					\end{cases}
				\end{equation*}
			\end{minipage}
		}
		
		\vspace{0.2cm}
		
		It can easily be verified once more that $\W$ is indeed a resolving set in this case.
		
		Finally, these three parts together with Theorem \ref{basic}-(c) give us the result.\qedhere

	\end{proofpart}
\end{proof}

\begin{Theorem}\label{Theorem 4}
	Let $\G=\C_{p,q,r}$ with $p=r$, and $q\lesseqqgtr r$, then
	
	\[
	\beta(\G)=\begin{cases}
		3 & \text{ when }q-r = 2 \text{ or } 4 \\
		2 & \text{otherwise}. \\
	\end{cases}
	\]
	
\end{Theorem}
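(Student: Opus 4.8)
The plan is to push the setup through the isomorphism $\C_{p,q,r}\simeq\Theta_{p+1,\,q-1,\,r+1}$ recorded after Figure \ref{typeiiibicyclic}: with $p=r$ the graph is $\Theta_{p+1,\,p+1,\,q-1}$, i.e.\ two outer paths of equal length $p+1$ and a distinguished inner path of length $q-1$, so everything is governed by $q-r$, and $q-r\in\{2,4\}$ is precisely when the multiset $\{p+1,p+1,q-1\}$ equals $\{m,m,m\}$ or $\{m,m,m+2\}$. I would therefore split the argument into the subranges $q-r\le 1$, $q-r=2$, $q-r=3$, $q-r=4$, and $q-r\ge 5$, keeping in mind that always $q\ge 2$ and $p=r\ge 1$.

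\emph{Step 1 (the $\beta=2$ subcases, $q-r\notin\{2,4\}$).} For $p\ge 2$ take $\W=\{v_1,v_p\}$ — both landmarks on the outer path $\P_1$, one adjacent to each branch vertex, which breaks the symmetry between $\P_1$ and $\P_3$ because those two paths get distinct distance vectors, while the inner path $\P_2$ is resolved by its own arclength; the degenerate values $p=r=1$ need an ad hoc pair of vertices on $\P_2$ instead. Then, exactly as in the displays of Theorems \ref{Theorem 2} and \ref{Theorem 3}, I would partition $\V(\G)$ into consecutive blocks running along $\P_1,\P_2,\P_3$, write $\RR(v_{\A}\mid\W)$ on each block as a pair of piecewise-linear functions of $\A$ with slopes $\pm 1$, and check that two vertices from different blocks cannot share a representation — the relevant two-by-two linear systems having no integer solution precisely when $q-r\neq 2,4$, in the same way $q-r\neq 2$ was exploited repeatedly in Theorem \ref{Theorem 1}. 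Combined with Theorem \ref{basic}(a), which forces $\beta(\G)\ge 2$ since $\G$ is not a path, this yields $\beta(\G)=2$.

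\emph{Step 2 (the special cases, $q-r\in\{2,4\}$).} For the upper bound, exhibit a three-element resolving set: for $q-r=2$ one can reuse $\W=\{v_1,v_2,v_{p+2}\}$ from Part \ref{part V} of Theorem \ref{Theorem 2}, and for $q-r=4$ the analogously placed triple, verified by the same block bookkeeping; this gives $\beta(\G)\le 3$. For the lower bound the cleanest route is the isomorphism: $q-r=2$ gives $\G\simeq\Theta_{p+1,p+1,p+1}$ and $q-r=4$ gives $\G\simeq\Theta_{p+1,p+1,p+3}=\Theta_{m,m,m+2}$ with $m=p+1\ge 2$, and for these $\beta=3$ is exactly the part of \cite{10142411} that is not in dispute (recalled in the introduction; the sub-case $m=2$, $q-r=2$ is also $K_{2,3}$, covered by Theorem \ref{basic}(d)), so $\beta(\G)=3$. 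A self-contained alternative is a twin argument: for $q-r=2$ the full $S_3$-symmetry on the three paths makes it short — any $\W=\{x,y\}$ either contains a branch vertex, in which case the two interior vertices at equal distance from it on the two paths avoiding the other landmark are unresolved; or lies on a single path, where the transposition of the other two equal paths fixes $\W$ pointwise; or meets two distinct paths, where $u$ and $v$ (or interior vertices of the third path) collide; and these placements are exhaustive.

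I expect the genuine obstacle to be the lower bound $\beta(\G)\ge 3$ carried out internally rather than by citing \cite{10142411}: showing that the constructed three-element set is \emph{minimal} (every two-element subset of it fails, as in Part \ref{part V}) only delivers $\beta(\G)\le 3$ plus minimality of that one set, whereas $\beta(\G)\ge 3$ requires ruling out \emph{every} two-element subset of $\V(\G)$. For $q-r=4$ the only nontrivial automorphism is the swap of the two equal outer paths, so positions such as ``one branch vertex together with an interior vertex of an outer path'' are not excluded by symmetry and must be disposed of by a direct distance computation of the kind used in Step 1; that, together with the small values $p=r=1$ and $p=r=2$, is where the delicate bookkeeping sits. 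Everything else is the routine — but voluminous — block-comparison argument already rehearsed in Theorems \ref{Theorem 1}--\ref{Theorem 3}, lengthy here only because the two equal outer paths force a finer partition than in Theorem \ref{Theorem 3}.
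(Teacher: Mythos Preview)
Your overall architecture matches the paper's: split on $q-r$, exhibit an explicit resolving set of the target size in each subrange, and verify via block partitions with piecewise-linear distance formulas. Two points of divergence deserve comment.

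First, your proposed two-element set $\W=\{v_1,v_p\}$ for the $\beta=2$ subcases does not work uniformly. Take $p=r=2$, $q=5$ (so $q-r=3$): with $\W=\{v_1,v_2\}$ one computes $\RR(v_4\mid\W)=(2,3)=\RR(v_8\mid\W)$, so $v_4\in\P_2$ and $v_8\in\P_3$ are unresolved. The paper avoids this by splitting further, taking $\W=\{v_1,v_{\lfloor(p+q)/2\rfloor}\}$ when $q-r<2$ and $\W=\{v_1,v_{p+2}\}$ when $q-r>2$ with $q-r\neq 4$; the decisive move in the latter range is to place the second landmark on $\P_2$ rather than on $\P_1$. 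Your intuition that two landmarks on $\P_1$ break the $\P_1$/$\P_3$ symmetry is correct, but that alone does nothing to separate $\P_2$-vertices from $\P_3$-vertices when their distances to the branch vertices happen to coincide, which is exactly what occurs for odd $q-r$ in the example above.

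Second --- and here you are ahead of the paper --- your handling of the lower bound $\beta\ge 3$ for $q-r\in\{2,4\}$ is the sounder one. The paper exhibits a three-element resolving set $\W$ and then checks only that deleting any single element of $\W$ leaves some pair unresolved; as you rightly flag, this establishes minimality of \emph{that particular set} but does not exclude an unrelated two-element resolving set, so it does not by itself yield $\beta\ge 3$. Your route via the isomorphism to $\Theta_{m,m,m}$ (when $q-r=2$) or $\Theta_{m,m,m+2}$ (when $q-r=4$) together with the undisputed half of \cite{10142411} is the clean way to close that gap.
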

\begin{proof}
	Let $\W$ be as given in the following:
	
	\begin{equation*}
		\W=\begin{cases}
			\left\{v_1, v_2,v_{\left\lfloor\frac{q-r}{2}\right\rfloor+p+1}\right\} & \hspace*{0.1cm} \text{for } q-r=2 \text{ or }4 \vspace{0.1cm},\\
			\left\{v_1,v_{\left\lfloor\frac{p+q}{2}\right\rfloor}\right\} & \hspace*{0.1cm} \text{for } q-r < 2 \vspace{0.1cm}\\ 
			\left\{v_1, v_{p+2}\right\} & \hspace*{0.1cm} \text{for } q-r>2 \vspace{0.1cm}.\\
			
		\end{cases}
	\end{equation*}
	
	%\noindent\rule[0.5ex]{\linewidth}{1pt}
	
	\begin{proofpart}
		\mybox{When $q-r=2$ or $4$.}
		
		Let us partition $\V(\G)$ into $\V_l$ as given in the following, along with their representation from $\W=\left\{v_1, v_2,v_{\left\lfloor\frac{q-r}{2}\right\rfloor+p+1}\right\}$:
		
		\resizebox{9cm}{!}{%
			\noindent\begin{minipage}{.7\linewidth}
				\begin{equation*}
					\V_l=\begin{cases}
						\left\{v_1\right\}, & \hspace*{0.1cm} \text{for } l=1 \vspace{0.1cm}\\ 
						\left\{v_2,v_3,\cdots,v_{p}\right\}, & \hspace*{0.1cm} \text{for } l=2 \vspace{0.1cm}\\
						\left\{v_{p+1},\cdots,v_{p+1+\left\lfloor\frac{q-p}{2}\right\rfloor}\right\} & \hspace*{0.1cm} \text{for } l=3 \vspace{0.1cm}\\ 
						\left\{v_{p+2+\left\lfloor\frac{q-p}{2}\right\rfloor},\cdots,v_{p+q-1-\left\lfloor\frac{q-p}{2}\right\rfloor}\right\}, & \hspace*{0.1cm} \text{for } l=4 \vspace{0.1cm}\\ 
						\left\{v_{p+q-\left\lfloor\frac{q-p}{2}\right\rfloor},\cdots, v_{p+q}\right\}, & \hspace*{0.1cm} \text{for } l=5 \vspace{0.1cm}\\
						
						\left\{v_{p+q+1},\cdots,v_{p+q+r-1}\right\}, & \hspace*{0.1cm} \text{for } l=6 \\
						\left\{v_{p+q+r}\right\}, & \hspace*{0.1cm} \text{for } l=7 .\vspace{0.1cm}\\ 
					\end{cases}
				\end{equation*}
			\end{minipage}%
			\begin{minipage}{.5\linewidth}
				\begin{equation*}
					\RR(v_{\A} | \R)=\begin{cases}
						\left({\A}-1, 2-{\A}, {\A}+\left\lfloor\frac{q-p}{2}\right\rfloor\right), & \hspace*{0.1cm} \text{for } {\A}\in \V_1 \vspace{0.1cm}\\
						
						\left({\A}-1, {\A}-2, {\A}+\left\lfloor\frac{q-p}{2}\right\rfloor\right), & \hspace*{0.1cm} \text{for } {\A}\in \V_2 \vspace{0.1cm}\\
						\left({\A}-p, {\A}+1-p,  p+1+\left\lfloor\frac{q-p}{2}\right\rfloor-{\A}\right), & \hspace*{0.1cm} \text{for } {\A}\in \V_3 \vspace{0.1cm}\\
						
						\left({\A}-p,{\A}+1-p, {\A}-(p+1+\left\lfloor\frac{q-p}{2}\right\rfloor)\right), & \hspace*{0.1cm} \text{for } {\A}\in \V_4 \vspace{0.1cm}\\
						\left(2p+q-{\A},2p+q-1-{\A},{\A}-(p+1+\left\lfloor\frac{q-p}{2}\right\rfloor)\right), & \hspace*{0.1cm} \text{for } {\A}\in \V_5 \vspace{0.1cm}\\
						\left({\A}+1-(p+q),{\A}+2-(p+q),{\A}+\left\lfloor\frac{q-p}{2}\right\rfloor-(p+q)\right), & \hspace*{0.1cm} \text{for } {\A}\in \V_6 \vspace{0.1cm}\\
						\left({\A}+1-(p+q),{\A}-(p+q),{\A}+\left\lfloor\frac{q-p}{2}\right\rfloor-(p+q)\right), & \hspace*{0.1cm} \text{for } {\A}\in \V_7 \vspace{0.1cm}.\\

					\end{cases}
				\end{equation*}
			\end{minipage}
		}
		
		\vspace{0.2cm}
		
		It can be easily proved that $\W$ is a resolving set, by considering two distinct vertices having same representation, and showing that this generates a contradiction. To show that $\W=\left\{v_1, v_2,v_{\left\lfloor\frac{q-r}{2}\right\rfloor+p+1}\right\}$ is a minimal resolving set, we consider the following three cases.
		
		\begin{mycases}
			\case When we remove $v_1$ from $\W$.
			
			Let $\W'=\W-v_1=\{v_2,v_{\left\lfloor\frac{q-r}{2}\right\rfloor+p+1}\}$. Let us consider the vertices $v_{p+q-1}$ and $v_{p+q+r-2}$ from Figure \ref{typeiiibicyclic}. Using the representations given above, it can be easily shown that $\RR(v_{p+q}|\W)=\RR(v_{p+q+r-1}|\W)$.
			
			\case When we remove $v_2$ from $\W$.
			
			Let $\W'=\W-v_2=\{v_1,v_{\left\lfloor\frac{q-r}{2}\right\rfloor+p+1}\}$. Let us consider the vertices $v_{p+q}$ and $v_{p+q+r-1}$. Again, using the representations given above, it can be easily proved that $\RR(v_{p}|\W)=\RR(v_{p+q+r}|\W)$.
			
			\case When we remove $v_{\left\lfloor\frac{q-r}{2}\right\rfloor+p+1}$ from $\W$.
			
			Let $\W'=\W-v_{\left\lfloor\frac{q-r}{2}\right\rfloor+p+1}=\{v_1,v_2\}$. Considering the vertices $v_{p+q-1}$ and $v_{p+q+r}$, one can see that they have same representation with respect to $\W'$.
		\end{mycases}
		
		These cases ensure that $\W$ is a smallest resolving set and hence, $\beta(\G)=3$ for this case.

	\end{proofpart}
	
	\begin{proofpart}
		\mybox{When $q-r>2$ and $q-r \neq 4$.}\label{part X}
		
		Let $\V_l$ and their representations from $\W=\{v_1,v_{p+2}\}$ be as given in the following:
		
		\resizebox{11cm}{!}{%
			\noindent\begin{minipage}{.7\linewidth}
				\begin{equation*}
					\V_l=\begin{cases}
						\left\{v_1,\cdots,v_{p}\right\}, & \hspace*{0.1cm} \text{for } l=1 \vspace{0.1cm}\\ 
						\left\{v_{p+1}\right\}, & \hspace*{0.1cm} \text{for } l=2 \vspace{0.1cm}\\
						\left\{v_{p+2},\cdots, v_{p+q-\left\lceil\frac{q-r}{2}\right\rceil}\right\}, & \hspace*{0.1cm} \text{for } l=3 \vspace{0.1cm}\\  
						
						\left\{v_{p+q+1-\left\lceil\frac{q-r}{2}\right\rceil} \cdots,v_{2p+2+\left\lceil\frac{q-p-2}{2}\right\rceil}\right\}, & \hspace*{0.1cm} \text{for } l=4 \vspace{0.1cm}\\ 
						\left\{v_{2p+3+\left\lceil\frac{q-p-2}{2}\right\rceil},\cdots,v_{p+q}\right\}, & \hspace*{0.1cm} \text{for } l=5 \vspace{0.1cm}\\ 
						\left\{v_{p+q+1},\cdots,v_{p+q+r}\right\}, & \hspace*{0.1cm} \text{for } l=6. \\
					\end{cases}
				\end{equation*}
			\end{minipage}%
			\begin{minipage}{.5\linewidth}
				\begin{equation*}
					\RR(v_{\A} | \R)=\begin{cases}
						\left({\A}-1, {\A}+1\right), & \hspace*{0.1cm} \text{for } {\A}\in \V_1 \vspace{0.1cm}\\
						\left({\A}-p, p+2-{\A}\right), & \hspace*{0.1cm} \text{for } {\A}\in \V_2 \vspace{0.1cm}\\
						
						\left({\A}-p, {\A}-(p+2)\right), & \hspace*{0.1cm} \text{for } {\A}\in \V_3 \vspace{0.1cm}\\
						
						\left(2p+q-{\A},{\A}-(p+2)\right), & \hspace*{0.1cm} \text{for } {\A}\in \V_4 \vspace{0.1cm}\\
						
						\left(2p+q-{\A},2p+q+2-{\A}\right), & \hspace*{0.1cm} \text{for } {\A}\in \V_5 \vspace{0.1cm}\\
						
						\left({\A}+1-(p+q), {\A}+1-(p+q)\right), & \hspace*{0.1cm} \text{for } {\A}\in \V_6 \vspace{0.1cm}.\\
						
					\end{cases}
				\end{equation*}
			\end{minipage}
		}
		
		\vspace{0.2cm}

	\end{proofpart}
	%\noindent\rule[0.5ex]{\linewidth}{1pt}
	
	\begin{proofpart}
		\mybox{When $q-r<2$.}\label{part XI}
		
		We subdivide this part into two cases.
		
		\begin{mycases}
			
			\case When $q-r=1$.
			
			Let $\V_l$ and their representations, from $\W=\left\{v_1,v_{\left\lfloor\frac{p+q}{2}\right\rfloor}\right\}$, be provided as in the following:
			
			\resizebox{10.5cm}{!}{%
				\noindent\begin{minipage}{.55\linewidth}
					\begin{equation*}
						\V_l=\begin{cases}
							
							\left\{v_1,v_2,v_3,\cdots,v_{p}\right\}, & \hspace*{0.1cm} \text{for } l=1 \vspace{0.1cm}\\
							\left\{v_{p+1}\right\}, & \hspace*{0.1cm} \text{for } l=2 \vspace{0.1cm}\\ 
							\left\{v_{p+2},\cdots,v_{p+q-1}\right\} & \hspace*{0.1cm} \text{for } l=3 \vspace{0.1cm}\\ 
							\left\{v_{p+q}\right\}, & \hspace*{0.1cm} \text{for } l=4 \vspace{0.1cm}\\
							\left\{v_{p+q+1},\cdots,v_{p+q+r}\right\}, & \hspace*{0.1cm} \text{for } l=5. \vspace{0.1cm}\\ 
						\end{cases}
					\end{equation*}
				\end{minipage}%
				\begin{minipage}{.5\linewidth}
					\begin{equation*}
						\RR(v_{\A} | \R)=\begin{cases}
							\left({\A}-1,p-{\A}\right), & \hspace*{0.1cm} \text{for } {\A}\in \V_1 \vspace{0.1cm}\\
							
							\left({\A}-p, p\right), & \hspace*{0.1cm} \text{for } {\A}\in \V_2 \vspace{0.1cm}\\
							
							\left({\A}-p, p+q+1-{\A}\right), & \hspace*{0.1cm} \text{for } {\A}\in \V_3 \vspace{0.1cm}\\
							
							\left(p,p+q+1-{\A}\right), & \hspace*{0.1cm} \text{for } {\A}\in \V_4 \\
							
							\left({\A}+1-(p+q),p+q+r+2-{\A}\right), & \hspace*{0.1cm} \text{for } {\A}\in \V_5 \vspace{0.1cm}.\\						
						\end{cases}
					\end{equation*}
				\end{minipage}
			}
			
			\vspace{0.2cm}

			\case {When $q-r<1$.} \label{part XII}
			
			For this case, let $V_l$ and the representations be as provided below.
			
			\resizebox{11cm}{!}{%
				\noindent\begin{minipage}{.7\linewidth}
					\begin{equation*}
						\V_l=\begin{cases}
							\left\{v_1,\cdots,v_{\left\lfloor\frac{p+q}{2}\right\rfloor}\right\}, & \hspace*{0.1cm} \text{for } l=1 \vspace{0.1cm}\\ 
							\left\{v_{\left\lfloor\frac{p+q}{2}\right\rfloor+1},\cdots, v_{p-\left\lfloor\frac{p-q}{2}\right\rfloor}\right\}, & \hspace*{0.1cm} \text{for } l=2 \vspace{0.1cm}\\
							\left\{v_{p+1-\left\lfloor\frac{p-q}{2}\right\rfloor},\cdots, v_{p}\right\}, & \hspace*{0.1cm} \text{for } l=3 \vspace{0.1cm}\\  
							
							\left\{v_{p+1}\right\}, & \hspace*{0.1cm} \text{for } l=4 \vspace{0.1cm}\\ 
							\left\{v_{p+2},\cdots,v_{p+q}\right\}, & \hspace*{0.1cm} \text{for } l=5 \vspace{0.1cm}\\ 
							\left\{v_{p+q+1},\cdots v_{p+q+1+\left\lceil\frac{r-q}{2}\right\rceil}\right\}, & \hspace*{0.1cm} \text{for } l=6 \vspace{0.1cm}\\
							
							\left\{v_{p+q+2+\left\lceil\frac{r-q}{2}\right\rceil},\cdots,v_{p+q+m-\left\lceil\frac{r-q}{2}\right\rceil}\right\}, & \hspace*{0.1cm} \text{for } l=7 \\
							\left\{v_{p+q+r+1-\left\lceil\frac{r-q}{2}\right\rceil},\cdots,v_{p+q+r}\right\}, & \hspace*{0.1cm} \text{for } l=8. \\
						\end{cases}
					\end{equation*}
				\end{minipage}%
				\begin{minipage}{.7\linewidth}
					\begin{equation*}
						\RR(v_{\A} | \R)=\begin{cases}
							\left({\A}-1, \left\lfloor\frac{p+q}{2}\right\rfloor-{\A}\right), & \hspace*{0.1cm} \text{for } {\A}\in \V_1 \vspace{0.1cm}\\
							\left({\A}-1, {\A}-\left\lfloor\frac{p+q}{2}\right\rfloor\right), & \hspace*{0.1cm} \text{for } {\A}\in \V_2 \vspace{0.1cm}\\
							
							\left(p+q+1-{\A}, {\A}-\left\lfloor\frac{p+q}{2}\right\rfloor\right), & \hspace*{0.1cm} \text{for } {\A}\in \V_3 \vspace{0.1cm}\\
							
							\left({\A}-p,\left\lfloor\frac{p+q}{2}\right\rfloor\right), & \hspace*{0.1cm} \text{for } {\A}\in \V_4 \vspace{0.1cm}\\
							
							\left({\A}-p,p+q+r+1-\left\lfloor\frac{p+q}{2}\right\rfloor-{\A}\right), & \hspace*{0.1cm} \text{for } {\A}\in \V_5 \vspace{0.1cm}\\
							
							\left({\A}+1-(p+q), {\A}+\left\lfloor\frac{p+q}{2}\right\rfloor-(p+q)\right), & \hspace*{0.1cm} \text{for } {\A}\in \V_6 \vspace{0.1cm}\\
							\left ({\A}+1-(p+q),2p+q+r+2-({\A}+\left\lfloor\frac{p+q}{2}\right\rfloor)\right), & \hspace*{0.1cm} \text{for } {\A}\in \V_7 \vspace{0.1cm}\\
							\left(p+2q+r+1-{\A},2p+q+r+2-({\A}+\left\lfloor\frac{p+q}{2}\right\rfloor)\right), & \hspace*{0.1cm} \text{for } {\A}\in \V_8. \vspace{0.1cm}\\
						\end{cases}
					\end{equation*}
				\end{minipage}
			}
			
			\vspace{0.2cm}

	\end{mycases}
		
		For both these cases, it is again a trivial matter to show that $\W$ is a resolving set.
	\end{proofpart}
	Parts 2 and 3 with Theorem \ref{basic}-(c) enable us to conclude that $\beta(\G)=2$ for both these parts.
\end{proof}

\begin{Corollary}
	Let $\G$ be a $\Theta$-graph then $\beta(\Theta_{p,p+2,p})=3$.
\end{Corollary}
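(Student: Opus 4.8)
The plan is to deduce this immediately from Theorem~\ref{Theorem 4} by translating between the two conventions the article uses for $\Theta$-graphs. Recall the observation that $\C_{p,q,r} \simeq \Theta_{p+1,\,q-1,\,r+1}$; solving for the $\C$-parameters, this says $\Theta_{a,b,c} \simeq \C_{a-1,\,b+1,\,c-1}$ whenever $a,c \geq 1$, so that the right-hand side is a genuine simple graph. Applying this with $a = c = p$ and $b = p+2$ yields
\[
\Theta_{p,p+2,p} \;\simeq\; \C_{\,p-1,\;p+3,\;p-1\,},
\]
which is a well-defined bicyclic graph of type-III for every $p \geq 2$ (for $p = 1$ the graph $\Theta_{1,3,1}$ is a multigraph and lies outside the discussion).

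Next I would check that $\C_{p-1,\,p+3,\,p-1}$ falls into the relevant branch of Theorem~\ref{Theorem 4}. Writing it as $\C_{p',q',r'}$ with $p' = r' = p-1$ and $q' = p+3$, we have $p' = r'$, so Theorem~\ref{Theorem 4} applies, and $q' - r' = (p+3) - (p-1) = 4$. Hence we are precisely in the first case of that theorem ($q-r \in \{2,4\}$), whose explicit resolving set $\W = \{v_1,\,v_2,\,v_{\lfloor (q-r)/2\rfloor + p + 1}\}$ together with the accompanying minimality argument gives $\beta(\C_{p-1,\,p+3,\,p-1}) = 3$. Since $\beta$ is a graph-isomorphism invariant, $\beta(\Theta_{p,p+2,p}) = \beta(\C_{p-1,\,p+3,\,p-1}) = 3$, as claimed.

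There is essentially no obstacle here: all the real work is already carried by Theorem~\ref{Theorem 4}, and the corollary is just the bookkeeping needed to pass between the $\Theta$-length notation and the $\C$-vertex notation. The only point worth a remark is the boundary value $p = 2$, and the observation that the graph $\Theta_{4,6,4} \simeq \C_{3,7,3}$ used in the motivation section to refute Theorem~\ref{sedlarthm} is exactly the instance $p = 4$ of this corollary; thus the corollary both subsumes that counterexample and is consistent with the direct computational check reported there.
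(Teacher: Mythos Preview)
Your proof is correct and follows essentially the same route as the paper: translate $\Theta_{p,p+2,p}$ into the $\C$-notation via $\C_{p,q,r}\simeq\Theta_{p+1,q-1,r+1}$ and then invoke the $q-r=4$ branch of Theorem~\ref{Theorem 4}. If anything, your version is more explicit than the paper's one-line argument, since you spell out the inverse translation, isolate the relevant case $q'-r'=4$, and note the connection with the $\Theta_{4,6,4}$ counterexample.
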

\begin{proof}
	By Theorem \ref{Theorem 4}, Part 1, $\beta(\C_{p,q,r})=3$ when $q-r=2$ or $4$. Since $p=r$ and $\C_{p,q,r}=\Theta_{p+1,q-1,r+1}$, we get our result.
\end{proof}

\section{Optimizing agricultural supply chain logistics with theoretical
	application}

In a world where supply chains are becoming increasingly complex and global, the application of graph theory is indispensable for maintaining a competitive edge and ensuring uninterrupted operations. In supply chain networks, nodes can represent various entities such as suppliers, manufacturers, distribution centers, and retailers. Edges can represent the transportation routes or the flow of goods between these entities. In this theoretical exploration, we tackle the problem of optimizing agricultural supply chain logistics within a diverse network of fields. While the practical implications of this problem are rooted in logistics and operations management, we approach it through the abstract nature of graph theory. This approach allows us to leverage mathematical structures and concepts to model and solve complex logistical challenges in agriculture.

Imagine managing a large agricultural enterprise spread across a region with diverse landscapes, including rolling hills, dense forests, and open plains. The farm consists of fields labeled Field 1, Field 2 up to Field 12. Every Field presents a unique challenges for transporting harvested crops to processing facilities. Our goal is to devise an efficient strategy to manage the logistics of transporting these crops using two types of transportation equipment: ground-based trucks (Super Truck) and aerial drones (Agri Drone).

Let us suppose that Fields 1 to 8 are accessible by ground trucks, but the uneven terrain and dense vegetation in some areas complicate their use. On the other hand, Fields 6 to 12 are more suitable for aerial drones, especially where ground trucks are hindered by topographical obstacles. The aerial drones are less effective in fields with significant tree cover or close proximity to water bodies where precision is critical. Ground trucks, while effective in open fields, struggle with accessibility in steep or forested areas. To manage these challenges, we represent the fields and their availability of transportation methods by using two overlapping cycles:

Ground Truck Cycle $\mathcal{C}_{1}:\{\text{Field 1}, \text{Field 2}, \cdots, \text{Field 8}\};$

Aerial Drone Cycle $\mathcal{C}_{2}:\{\text{Field 6}, \text{Field 7}, \cdots, \text{Field 12}\}.$

The fields $\{\text{Field 6}, \text{Field 7}, \text{Field 8}\}$ can be serviced by both types of transporters, creating a complex network where strategic planning is crucial to avoid inefficiencies. If we represent this whole network using one connected graph, it can be easily modeled using the following graphical structure..

\begin{figure}[H]
	\centering
	\tikzset{every picture/.style={line width=0.75pt}} %set default line width to 0.75pt        
	\resizebox{10cm}{4.8cm}{%
		\begin{tikzpicture}[x=0.75pt,y=0.75pt,yscale=-1,xscale=1]
			%uncomment if require: \path (0,300); %set diagram left start at 0, and has height of 300

	%Shape: Circle [id:dp743632011458764] 
	\draw  [fill={rgb, 255:red, 126; green, 211; blue, 33 }  ,fill opacity=1 ] (201,39.92) .. controls (201,30.02) and (209.02,22) .. (218.92,22) .. controls (228.81,22) and (236.83,30.02) .. (236.83,39.92) .. controls (236.83,49.81) and (228.81,57.83) .. (218.92,57.83) .. controls (209.02,57.83) and (201,49.81) .. (201,39.92) -- cycle ;
	%Curve Lines [id:da25535526422438437] 
	\draw    (236.83,39.92) .. controls (276.83,9.92) and (282.83,81) .. (322.83,51) ;
	%Rounded Rect [id:dp15652296360057294] 
	\draw  [fill={rgb, 255:red, 126; green, 211; blue, 33 }  ,fill opacity=1 ] (322.83,35.4) .. controls (322.83,32.53) and (325.16,30.2) .. (328.03,30.2) -- (356.47,30.2) .. controls (359.34,30.2) and (361.67,32.53) .. (361.67,35.4) -- (361.67,51) .. controls (361.67,53.87) and (359.34,56.2) .. (356.47,56.2) -- (328.03,56.2) .. controls (325.16,56.2) and (322.83,53.87) .. (322.83,51) -- cycle ;
	%Shape: Ellipse [id:dp603825005744679] 
	\draw  [fill={rgb, 255:red, 126; green, 211; blue, 33 }  ,fill opacity=1 ] (426.83,37.4) .. controls (426.83,29.39) and (437.32,22.9) .. (450.25,22.9) .. controls (463.18,22.9) and (473.67,29.39) .. (473.67,37.4) .. controls (473.67,45.41) and (463.18,51.9) .. (450.25,51.9) .. controls (437.32,51.9) and (426.83,45.41) .. (426.83,37.4) -- cycle ;
	%Curve Lines [id:da13990640417751066] 
	\draw    (361.67,35.4) .. controls (395.83,61) and (391.83,22) .. (426.83,37.4) ;
	%Shape: Ellipse [id:dp6977576052505512] 
	\draw  [fill={rgb, 255:red, 126; green, 211; blue, 33 }  ,fill opacity=1 ] (508.42,90.5) .. controls (508.42,82.49) and (518.9,76) .. (531.83,76) .. controls (544.77,76) and (555.25,82.49) .. (555.25,90.5) .. controls (555.25,98.51) and (544.77,105) .. (531.83,105) .. controls (518.9,105) and (508.42,98.51) .. (508.42,90.5) -- cycle ;
	%Curve Lines [id:da06426022661675135] 
	\draw    (473.67,37.4) .. controls (513.83,26) and (533.83,45) .. (531.83,76) ;
	%Rounded Rect [id:dp4662600847287661] 
	\draw  [fill={rgb, 255:red, 126; green, 211; blue, 33 }  ,fill opacity=1 ] (102.63,80.2) .. controls (102.63,77.33) and (104.96,75) .. (107.83,75) -- (136.27,75) .. controls (139.14,75) and (141.47,77.33) .. (141.47,80.2) -- (141.47,95.8) .. controls (141.47,98.67) and (139.14,101) .. (136.27,101) -- (107.83,101) .. controls (104.96,101) and (102.63,98.67) .. (102.63,95.8) -- cycle ;
	%Curve Lines [id:da06506760096992315] 
	\draw    (141.47,80.2) .. controls (146.47,53.2) and (167.83,36) .. (201,39.92) ;
	%Curve Lines [id:da8430782053184231] 
	\draw    (141.47,95.8) .. controls (147.83,117) and (171.83,133) .. (199.83,128) ;
	%Shape: Circle [id:dp3518894963670862] 
	\draw  [fill={rgb, 255:red, 126; green, 211; blue, 33 }  ,fill opacity=1 ] (199.83,128) .. controls (199.83,118.1) and (207.85,110.08) .. (217.75,110.08) .. controls (227.65,110.08) and (235.67,118.1) .. (235.67,128) .. controls (235.67,137.9) and (227.65,145.92) .. (217.75,145.92) .. controls (207.85,145.92) and (199.83,137.9) .. (199.83,128) -- cycle ;
	%Shape: Ellipse [id:dp389647718721555] 
	\draw  [fill={rgb, 255:red, 126; green, 211; blue, 33 }  ,fill opacity=1 ] (300.83,130) .. controls (300.83,121.99) and (311.32,115.5) .. (324.25,115.5) .. controls (337.18,115.5) and (347.67,121.99) .. (347.67,130) .. controls (347.67,138.01) and (337.18,144.5) .. (324.25,144.5) .. controls (311.32,144.5) and (300.83,138.01) .. (300.83,130) -- cycle ;
	%Curve Lines [id:da8308816599914775] 
	\draw    (235.67,128) .. controls (269.83,153.6) and (265.83,114.6) .. (300.83,130) ;
	%Curve Lines [id:da43063010386054] 
	\draw    (347.67,130) .. controls (387.67,100) and (393.67,171.08) .. (433.67,141.08) ;
	%Rounded Rect [id:dp4289533783681887] 
	\draw  [fill={rgb, 255:red, 126; green, 211; blue, 33 }  ,fill opacity=1 ] (433.67,125.48) .. controls (433.67,122.61) and (435.99,120.28) .. (438.87,120.28) -- (467.3,120.28) .. controls (470.17,120.28) and (472.5,122.61) .. (472.5,125.48) -- (472.5,141.08) .. controls (472.5,143.96) and (470.17,146.28) .. (467.3,146.28) -- (438.87,146.28) .. controls (435.99,146.28) and (433.67,143.96) .. (433.67,141.08) -- cycle ;
	%Curve Lines [id:da8439056791755848] 
	\draw    (531.83,105) .. controls (528.83,130) and (499.83,143) .. (472.5,125.48) ;
	%Shape: Ellipse [id:dp5138734804718783] 
	\draw  [fill={rgb, 255:red, 126; green, 211; blue, 33 }  ,fill opacity=1 ] (151.83,219) .. controls (151.83,210.99) and (162.32,204.5) .. (175.25,204.5) .. controls (188.18,204.5) and (198.67,210.99) .. (198.67,219) .. controls (198.67,227.01) and (188.18,233.5) .. (175.25,233.5) .. controls (162.32,233.5) and (151.83,227.01) .. (151.83,219) -- cycle ;
	%Curve Lines [id:da4774588708456571] 
	\draw    (175.25,204.5) .. controls (217.83,177) and (163.83,168) .. (217.75,145.92) ;
	%Shape: Circle [id:dp2766034873811065] 
	\draw  [fill={rgb, 255:red, 126; green, 211; blue, 33 }  ,fill opacity=1 ] (499,221.92) .. controls (499,212.02) and (507.02,204) .. (516.92,204) .. controls (526.81,204) and (534.83,212.02) .. (534.83,221.92) .. controls (534.83,231.81) and (526.81,239.83) .. (516.92,239.83) .. controls (507.02,239.83) and (499,231.81) .. (499,221.92) -- cycle ;
	%Curve Lines [id:da28093480789203196] 
	\draw    (467.3,146.28) .. controls (517.83,156) and (476.83,188) .. (516.92,204) ;
	%Rounded Rect [id:dp723902025147352] 
	\draw  [fill={rgb, 255:red, 126; green, 211; blue, 33 }  ,fill opacity=1 ] (276.83,210.4) .. controls (276.83,207.53) and (279.16,205.2) .. (282.03,205.2) -- (310.47,205.2) .. controls (313.34,205.2) and (315.67,207.53) .. (315.67,210.4) -- (315.67,226) .. controls (315.67,228.87) and (313.34,231.2) .. (310.47,231.2) -- (282.03,231.2) .. controls (279.16,231.2) and (276.83,228.87) .. (276.83,226) -- cycle ;
	%Curve Lines [id:da45562312686535367] 
	\draw    (198.67,219) .. controls (238.67,189) and (242.03,261.2) .. (282.03,231.2) ;
	%Shape: Ellipse [id:dp9189856540682646] 
	\draw  [fill={rgb, 255:red, 126; green, 211; blue, 33 }  ,fill opacity=1 ] (388.83,219) .. controls (388.83,210.99) and (399.32,204.5) .. (412.25,204.5) .. controls (425.18,204.5) and (435.67,210.99) .. (435.67,219) .. controls (435.67,227.01) and (425.18,233.5) .. (412.25,233.5) .. controls (399.32,233.5) and (388.83,227.01) .. (388.83,219) -- cycle ;
	%Curve Lines [id:da5734105125628339] 
	\draw    (435.67,219) .. controls (475.67,189) and (459,251.92) .. (499,221.92) ;
	%Curve Lines [id:da5069862137565662] 
	\draw    (315.67,226) .. controls (333.83,242) and (370.83,243) .. (388.83,219) ;
	
	% Text Node
	\draw (64,61) node [anchor=north west][inner sep=0.75pt]  [font=\scriptsize] [align=left] {Field 1};
	% Text Node
	\draw (204,8) node [anchor=north west][inner sep=0.75pt]  [font=\scriptsize] [align=left] {Field 2};
	% Text Node
	\draw (324,9) node [anchor=north west][inner sep=0.75pt]  [font=\scriptsize] [align=left] {Field 3};
	% Text Node
	\draw (436,8) node [anchor=north west][inner sep=0.75pt]  [font=\scriptsize] [align=left] {Field 4};
	% Text Node
	\draw (544,60) node [anchor=north west][inner sep=0.75pt]  [font=\scriptsize] [align=left] {Field 5};
	% Text Node
	\draw (204,91) node [anchor=north west][inner sep=0.75pt]  [font=\scriptsize] [align=left] {Field 6};
	% Text Node
	\draw (308,90) node [anchor=north west][inner sep=0.75pt]  [font=\scriptsize] [align=left] {Field 7};
	% Text Node
	\draw (436,91) node [anchor=north west][inner sep=0.75pt]  [font=\scriptsize] [align=left] {Field 8};
	% Text Node
	\draw (159,250) node [anchor=north west][inner sep=0.75pt]  [font=\scriptsize] [align=left] {Field 9};
	% Text Node
	\draw (277,250) node [anchor=north west][inner sep=0.75pt]  [font=\scriptsize] [align=left] {Field 10};
	% Text Node
	\draw (395,250) node [anchor=north west][inner sep=0.75pt]  [font=\scriptsize] [align=left] {Field 11};
	% Text Node
	\draw (501,250) node [anchor=north west][inner sep=0.75pt]  [font=\scriptsize] [align=left] {Field 12};
	% Text Node
	\draw (109.83,78.4) node [anchor=north west][inner sep=0.75pt]    {$v_{1}$};
	% Text Node
	\draw (210.83,30.4) node [anchor=north west][inner sep=0.75pt]    {$v_{2}$};
	% Text Node
	\draw (330.03,33.6) node [anchor=north west][inner sep=0.75pt]    {$v_{3}$};
	% Text Node
	\draw (441.83,27.4) node [anchor=north west][inner sep=0.75pt]    {$v_{4}$};
	% Text Node
	\draw (522.83,81.4) node [anchor=north west][inner sep=0.75pt]    {$v_{5}$};
	% Text Node
	\draw (208.83,119.4) node [anchor=north west][inner sep=0.75pt]    {$v_{6}$};
	% Text Node
	\draw (316.83,118.4) node [anchor=north west][inner sep=0.75pt]    {$v_{7}$};
	% Text Node
	\draw (440.87,123.68) node [anchor=north west][inner sep=0.75pt]    {$v_{8}$};
	% Text Node
	\draw (166.83,208.4) node [anchor=north west][inner sep=0.75pt]    {$v_{9}$};
	% Text Node
	\draw (284.03,208.6) node [anchor=north west][inner sep=0.75pt]    {$v_{10}$};
	% Text Node
	\draw (402.03,209.6) node [anchor=north west][inner sep=0.75pt]    {$v_{11}$};
	% Text Node
	\draw (506.03,213.6) node [anchor=north west][inner sep=0.75pt]    {$v_{12}$};

\end{tikzpicture}
}
\caption{A Logistics Network of Fields Serviceable by Air and Land }
\label{application01}
\end{figure}
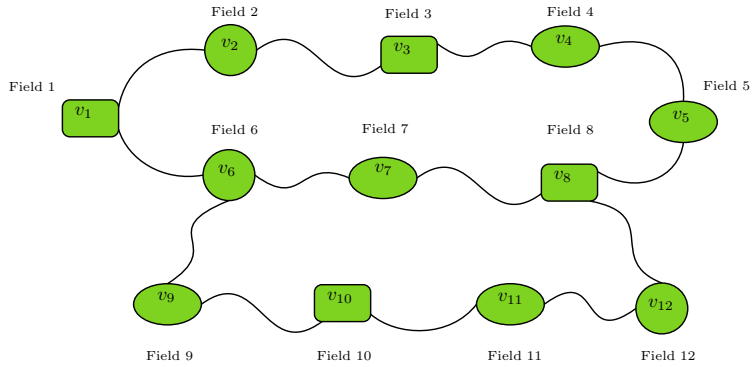

If we rename the fields 1 through 12 by $\{v_1,v_2, \cdots, v_{12}\}$, as shown in figure \ref{application01}, we transform this logistic network into a graph theoretical graph. This graph is given in the figure \ref{application}.

\begin{figure}[H]
	\centering
	\tikzset{every picture/.style={line width=0.75pt}} %set default line width to 0.75pt        
	\resizebox{5.4cm}{4.5cm}{%
		\begin{tikzpicture}[x=0.75pt,y=0.75pt,yscale=-1,xscale=1]
			%uncomment if require: \path (0,300); %set diagram left start at 0, and has height of 300
			
			%Shape: Ellipse [id:dp6751672508780109] 
			\draw  [fill={rgb, 255:red, 0; green, 0; blue, 0 }  ,fill opacity=1 ] (394.59,60.09) .. controls (394.59,57.24) and (396.79,54.93) .. (399.51,54.93) .. controls (402.23,54.93) and (404.43,57.24) .. (404.43,60.09) .. controls (404.43,62.94) and (402.23,65.25) .. (399.51,65.25) .. controls (396.79,65.25) and (394.59,62.94) .. (394.59,60.09) -- cycle ;
			%Shape: Ellipse [id:dp5549308006374756] 
			\draw  [fill={rgb, 255:red, 0; green, 0; blue, 0 }  ,fill opacity=1 ] (346.59,60.09) .. controls (346.59,57.24) and (348.79,54.93) .. (351.51,54.93) .. controls (354.23,54.93) and (356.43,57.24) .. (356.43,60.09) .. controls (356.43,62.94) and (354.23,65.25) .. (351.51,65.25) .. controls (348.79,65.25) and (346.59,62.94) .. (346.59,60.09) -- cycle ;
			%Shape: Ellipse [id:dp8236357940148391] 
			\draw  [fill={rgb, 255:red, 0; green, 0; blue, 0 }  ,fill opacity=1 ] (295.59,61.09) .. controls (295.59,58.24) and (297.79,55.93) .. (300.51,55.93) .. controls (303.23,55.93) and (305.43,58.24) .. (305.43,61.09) .. controls (305.43,63.94) and (303.23,66.25) .. (300.51,66.25) .. controls (297.79,66.25) and (295.59,63.94) .. (295.59,61.09) -- cycle ;
			%Shape: Ellipse [id:dp8991647347196112] 
			\draw  [fill={rgb, 255:red, 0; green, 0; blue, 0 }  ,fill opacity=1 ] (245.59,61.09) .. controls (245.59,58.24) and (247.79,55.93) .. (250.51,55.93) .. controls (253.23,55.93) and (255.43,58.24) .. (255.43,61.09) .. controls (255.43,63.94) and (253.23,66.25) .. (250.51,66.25) .. controls (247.79,66.25) and (245.59,63.94) .. (245.59,61.09) -- cycle ;
			%Shape: Ellipse [id:dp624972059604731] 
			\draw  [fill={rgb, 255:red, 0; green, 0; blue, 0 }  ,fill opacity=1 ] (195.59,61.09) .. controls (195.59,58.24) and (197.79,55.93) .. (200.51,55.93) .. controls (203.23,55.93) and (205.43,58.24) .. (205.43,61.09) .. controls (205.43,63.94) and (203.23,66.25) .. (200.51,66.25) .. controls (197.79,66.25) and (195.59,63.94) .. (195.59,61.09) -- cycle ;
			%Shape: Ellipse [id:dp7296911626222111] 
			\draw  [fill={rgb, 255:red, 0; green, 0; blue, 0 }  ,fill opacity=1 ] (346.59,126.09) .. controls (346.59,123.24) and (348.79,120.93) .. (351.51,120.93) .. controls (354.23,120.93) and (356.43,123.24) .. (356.43,126.09) .. controls (356.43,128.94) and (354.23,131.25) .. (351.51,131.25) .. controls (348.79,131.25) and (346.59,128.94) .. (346.59,126.09) -- cycle ;
			%Shape: Ellipse [id:dp2392475733401822] 
			\draw  [fill={rgb, 255:red, 0; green, 0; blue, 0 }  ,fill opacity=1 ] (295.59,127.09) .. controls (295.59,124.24) and (297.79,121.93) .. (300.51,121.93) .. controls (303.23,121.93) and (305.43,124.24) .. (305.43,127.09) .. controls (305.43,129.94) and (303.23,132.25) .. (300.51,132.25) .. controls (297.79,132.25) and (295.59,129.94) .. (295.59,127.09) -- cycle ;
			%Shape: Ellipse [id:dp29114661248337637] 
			\draw  [fill={rgb, 255:red, 0; green, 0; blue, 0 }  ,fill opacity=1 ] (245.59,127.09) .. controls (245.59,124.24) and (247.79,121.93) .. (250.51,121.93) .. controls (253.23,121.93) and (255.43,124.24) .. (255.43,127.09) .. controls (255.43,129.94) and (253.23,132.25) .. (250.51,132.25) .. controls (247.79,132.25) and (245.59,129.94) .. (245.59,127.09) -- cycle ;
			%Shape: Ellipse [id:dp7195114945845309] 
			\draw  [fill={rgb, 255:red, 0; green, 0; blue, 0 }  ,fill opacity=1 ] (372.59,187.09) .. controls (372.59,184.24) and (374.79,181.93) .. (377.51,181.93) .. controls (380.23,181.93) and (382.43,184.24) .. (382.43,187.09) .. controls (382.43,189.94) and (380.23,192.25) .. (377.51,192.25) .. controls (374.79,192.25) and (372.59,189.94) .. (372.59,187.09) -- cycle ;
			%Shape: Ellipse [id:dp39363791907628176] 
			\draw  [fill={rgb, 255:red, 0; green, 0; blue, 0 }  ,fill opacity=1 ] (321.59,188.09) .. controls (321.59,185.24) and (323.79,182.93) .. (326.51,182.93) .. controls (329.23,182.93) and (331.43,185.24) .. (331.43,188.09) .. controls (331.43,190.94) and (329.23,193.25) .. (326.51,193.25) .. controls (323.79,193.25) and (321.59,190.94) .. (321.59,188.09) -- cycle ;
			%Shape: Ellipse [id:dp9979431594327077] 
			\draw  [fill={rgb, 255:red, 0; green, 0; blue, 0 }  ,fill opacity=1 ] (271.59,188.09) .. controls (271.59,185.24) and (273.79,182.93) .. (276.51,182.93) .. controls (279.23,182.93) and (281.43,185.24) .. (281.43,188.09) .. controls (281.43,190.94) and (279.23,193.25) .. (276.51,193.25) .. controls (273.79,193.25) and (271.59,190.94) .. (271.59,188.09) -- cycle ;
			%Shape: Ellipse [id:dp0022301204411210307] 
			\draw  [fill={rgb, 255:red, 0; green, 0; blue, 0 }  ,fill opacity=1 ] (221.59,188.09) .. controls (221.59,185.24) and (223.79,182.93) .. (226.51,182.93) .. controls (229.23,182.93) and (231.43,185.24) .. (231.43,188.09) .. controls (231.43,190.94) and (229.23,193.25) .. (226.51,193.25) .. controls (223.79,193.25) and (221.59,190.94) .. (221.59,188.09) -- cycle ;
			%Straight Lines [id:da5536433068888074] 
			\draw    (200.51,61.09) -- (399.51,60.09) ;
			%Straight Lines [id:da6605071399166285] 
			\draw    (250.51,127.09) -- (351.51,126.09) ;
			%Straight Lines [id:da04585793937113358] 
			\draw    (226.51,188.09) -- (377.51,187.09) ;
			%Straight Lines [id:da24061830532102935] 
			\draw    (200.51,61.09) -- (250.51,127.09) ;
			%Straight Lines [id:da7771984778678867] 
			\draw    (399.51,60.09) -- (351.51,126.09) ;
			%Straight Lines [id:da5941149878869683] 
			\draw    (250.51,127.09) -- (226.51,188.09) ;
			%Straight Lines [id:da5769562824790726] 
			\draw    (351.51,126.09) -- (377.51,187.09) ;
			
			% Text Node
			\draw (396,30.4) node [anchor=north west][inner sep=0.75pt]    {$v_5$};
			% Text Node
			\draw (346,30.4) node [anchor=north west][inner sep=0.75pt]    {$v_4$};
			% Text Node
			\draw (295,30.4) node [anchor=north west][inner sep=0.75pt]    {$v_3$};
			% Text Node
			\draw (247,30.4) node [anchor=north west][inner sep=0.75pt]    {$v_2$};
			% Text Node
			\draw (194,30.4) node [anchor=north west][inner sep=0.75pt]    {$v_1$};
			% Text Node
			\draw (219,117.4) node [anchor=north west][inner sep=0.75pt]    {$v_6$};
			% Text Node
			\draw (296,93.4) node [anchor=north west][inner sep=0.75pt]    {$v_7$};
			% Text Node
			\draw (374,115.4) node [anchor=north west][inner sep=0.75pt]    {$v_8$};
			% Text Node
			\draw (370,202.4) node [anchor=north west][inner sep=0.75pt]    {$v_{12}$};
			% Text Node
			\draw (319,202.4) node [anchor=north west][inner sep=0.75pt]    {$v_{11}$};
			% Text Node
			\draw (271,204.4) node [anchor=north west][inner sep=0.75pt]    {$v_{10}$};
			% Text Node
			\draw (218,202.4) node [anchor=north west][inner sep=0.75pt]    {$v_9$};

		\end{tikzpicture}
	}
	\caption{Bicyclic graph representing the logistical problem network}
	\label{application}
\end{figure}
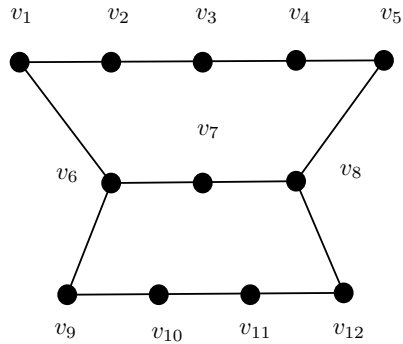

By representing the transportation network as a graph, we can develop solutions that apply broadly, transcending individual cases to encompass a
variety of logistical scenarios.

Since the number of vertices in first path, $v_1 v_2 v_3 v_4 v_5$, are more than both other paths, we can apply Theorem 6 from above. Moreover, since $q-r=2-3=-1<2$, where $q$ is the the number of vertices second path and $r$ is the number of vertices of first path, Part 1 of Theorem 6 is applicable and we get that $\{v_1, v_4\}$ are the metric basis in this case. This tells us that the fields $1$ and $4$ are the landmarks of the logistics network and the relative position of the whole network can be easily identified using these landmarks.

Let us now consider that both transporters are equipped with advanced onboard computers and GPS systems. These systems must communicate in real-time to synchronize operations, ensuring that both transporters start at the same time and avoid overlapping coverage. Weather changes can affect the suitability of either transporter. For instance, strong winds might limit aerial drone operations, while muddy conditions might restrict ground trucks. Transport efficiency must be optimized to reduce costs and environmental impact. Both transporters need to track the amount of crops transported and adjust their operations based on real-time feedback from the fields.

This is an example of a real world logistics problem where computers need to make autonomous decisions based on real world inputs. Since, there is a need to give a unique identifier to every field, as well as to compute the relative distance of the fields, and the positions of the transporters at any given time, the metric basis (landmarks) is an effective way to calculate all these parameters. The distance vectors in this case not only give a unique identifier to every field, they also provide the relative distances with respect to the chosen landmarks (metric basis). Feeding this data into a machine learning algorithm, a model can be easily developed which can efficiently solve this logistics problem.

This problem not only illustrates the logistical challenges of crop transportation in a diverse agricultural network but also highlights the role of advanced technology and strategic business management in overcoming these challenges.

\textbf{Further Expansion of the Proposed Solutions:}

$1.$ As the farm expands, the complexity of managing the transportation logistics increases. In such a scenario, the machines can use metric dimensions and metric basis to effectively use landmarks to identify all fields in the network and communicate their data to the other partner. This concept is further explained in \cite{khuller1996landmarks}.

$2.$ By linking the problem of optimizing agricultural supply chain logistics with a decision-making process involving alternatives (Super Truck and aerial Agri Drone) and attributes (accessibility, efficiency, cost, environmental impact), we can effectively address the logistical challenges and complexities of transporting crops in a diverse agricultural network using complex fuzzy intelligent decision modeling \cite{wang2024complex}.

\section{SUMMARY}

We worked with the resolving set and metric dimension of Bicyclic graphs of type III and proved that $\beta({\C_{p,q,r}})$ is a constant with values $2$ or $3$ depending on the values of $p,q,r$.

Metric dimension problem for these types of graphs was already discussed in \cite{10142411} using the term $\Theta$-graph. However, due to some limitations, the authors were unable to consider all the cases, which produced some minor mistakes in the proposed results. This article is a parallel research into the same problem and aims to rectify the mistakes present in that research.

We also proposed an application of these metric basis to optimize a supply chain logistics problem for a complex network, with autonomous entities operating inside the network.

\section{CONCLUSION}

It was proved in \cite{math11040869} that the metric dimension of bicyclic graphs of type-I and II is constant. This article concludes that the metric dimension of bicyclic graphs of type-III is also constant. Specifically, when $\P_1$ and $\P_2$ are of same length and $\P_3$ is $2$ less than both $\P_1, \P_2$, the metric dimension will be $3$. Similarly, when $\P_1$ and $\P_3$ are of equal length and $\P_2$ is $2$ or $4$ vertices longer than both $\P_1,\P_3$, the metric dimension is $3$. Except these $3$ cases, for all other possibilities of $\P_1$, $\P_2$ and $\P_3$, the metric dimension always remains $2$.

%It should be noted that the relationship of $p,q$ and $r$, namely, $p \geq q$ with $r$ unrestricted, actually solves all possible cases of bicyclic graphs of type-III by isomprphism. As an example, let us consider a case which is not covered here, say, $p <q$ and $p<r$. Since the relationship of $q$ and $r$ is not specified, by trichotomy property, $q<r$, $q=r$ and $q>r$ are all valid possibilities. It is already mentioned that $C_{p,q,r} \simeq C_{r,q,p}$ for fixed $p,r$. These facts enable us to state the following statements.

%\begin{itemize}
%	\item When $p<q$, $p<r$ and $q<r$, we get $\max\{p,q\}<r$. This case was solved in Theorem \ref{Theorem 3}.
%	\item When $p<q$, $p<r$ and $q=r$. Using the isomorphism, we see that this was solved in Theorem \ref{Theorem 2}.
%	\item When $p<q$, $p<r$ and $q>r$ ($r<q$), we get $p<r<q$ or $\max\{p,r\}<q$ which was solved in Theorem \ref{Theorem 1}.
%\end{itemize}

We also mention that unicyclic graphs have enjoyed a huge interest in terms of metric dimension problems compared to bicyclic graphs. Following questions/ problems are a future avenue to continue the current research.

Problem 1: Studying other variants of metric dimension for these graphs, e.g., edge metric dimensions, local metric dimensions, fault tolerant metric dimensions etc.

Problem 2: Extending bicyclic graphs to tricyclic graphs and calculating their metric dimensions.

Problem 3: Applying graph operations on $\C_{p,q,r}$, e.g., Cartesian product and join operation, with other graph families and calculating their metric dimensions.

Problem 4: Studying the changes in $\beta(\C_{p,q,r})$ by removing vertex/edge.

%Bicyclic graphs appear in many fields of sciences. e. g., as bicyclic compounds \cite{sorrell2006organic} in chemistry and as complex networks \cite{ma2016wiener} in physical, biological, and social sciences. Moreover, graph distance profiles are used for machine learning purposes in graph neural networks \cite{razaghi2020supervised}. Combining these together, we can easily generate models of chemicals and networks, using bicyclic graphs, and apply distance based machine learning algorithms to understand their properties.

%%%%%%%%%%%%%%%%%%%%%%%%%%%%%%%%%%%%%%%%%%ma2016wiener
\vspace{6pt} 

%%%%%%%%%%%%%%%%%%%%%%%%%%%%%%%%%%%%%%%%%%

\textbf{Author contribution statement} 

All authors listed have significantly contributed to the development and the writing of this article. 

\textbf{Data availability statement} 

No data was used for the research described in the article. 

\textbf{Additional information} 

No additional information is available for this paper. 

\textbf{Declaration of competing interest }

The authors declare that they have no known competing financial interests or personal relationships that could have appeared to 
influence the work reported in this paper.

\textbf{Acknowledgment}
This work was supported by the National Social Science Foundation of China (no. 23BJY010) and the Shandong Province Social Science Planning Subjects (no. 23CJJJ31).

%%%%%%%%%%%%%%%%%%%%%%%%%%%%%%%%%%%%%%%%%%

\bibliographystyle{ieeetran}

\bibliography{paper}{}

\end{document}